\newtheorem{thm}{Theorem}[section]
\newtheorem{lem}[thm]{Lemma}
\newtheorem{cor}[thm]{Corollary}
\newtheorem{prop}[thm]{Proposition}
\theoremstyle{definition}
\newtheorem{defn}[thm]{Definition}
\newtheorem{exam}[thm]{Example}
\newtheorem{conj}[thm]{Conjecture}
\newtheorem{que}[thm]{Question}
\theoremstyle{remark}
\newtheorem{rmk}[thm]{Remark}
\numberwithin{equation}{section}
\numberwithin{equation}{section} \makeatletter
\begin{document}

\title{Knot Floer homology of satellite knots with $(1,1)$-patterns}

\author{Wenzhao Chen}
\address{Max Planck Institute of Mathematics, Vivatsgasse 7, 53111 Bonn, Germany}
\email{chenwenz@msu.edu}
\thanks{}

\begin{abstract}
For pattern knots admitting genus-one bordered Heegaard diagrams, we show the knot Floer chain complexes of the corresponding satellite knots can be computed using immersed curves. This, in particular, gives a convenient way to compute the $\tau$-invariant. For patterns $P$ obtained from two-bridge links $b(p,q)$, we derive a formula for the $\tau$-invariant of $P(T_{2,3})$ and $P(-T_{2,3})$ in terms of $(p,q)$, and use this formula to study whether such patterns induce homomorphisms on the concordance group, providing a glimpse at a conjecture due to Hedden. 
\end{abstract}
\maketitle
\section{Introduction}
In 2001, Ozsv\'{a}th and Szab\'{o} defined a package of invariants for closed oriented 3-manifolds known as the \emph{Heegaard Floer homology} \cite{OS04}. Later on, a bordered theory for the hat-version Heegaard Floer homology is developed by Lipshitz-Ozsv\'{a}th-Thurston \cite{MR3827056}. This theory associates certain differential modules and A-infinity modules to 3-manifolds with parametrized boundaries, called type D ($\widehat{CFD}$) and type A structures ($\widehat{CFA}$) respectively. For a closed 3-manifold constructed from gluing two 3-manifolds with boundaries, the corresponding hat-version Heegaard Floer homology can be obtained by an appropriate tensor product of the type D structure of one piece and the type A structure of the other. Recently, Rasmussen-Hanselman-Watson gave a geometric interpretation of the bordered theory for 3-manifolds with torus boundary \cite{hanselman2016bordered}: the bordered invariants are interpreted as immersed curves decorated with local systems on $\partial M\backslash \{w\}$, where $w$ is a base point, and the pairing of type D and type A structures translates to taking Lagrangian intersection Floer homology of the curve-sets on the torus.  

The \emph{knot Floer homology} is a variant of the Heegaard Floer homology defined for null-homologous knots in closed oriented 3-manifolds, introduced by Ozsv\'{a}th-Szab\'{o} and independently by Rasmussen \cite{MR2065507, MR2704683}. One can regard this invariant as the filtered chain homotopy type of certain $\mathbb{Z}$-filtered chain complex $\widehat{CFK}(K)$ associated to a knot $K$. The bordered theory carries through this setting: it associates filtered bordered invariants to knots in 3-manifolds with parametrized boundaries, and the knot Floer homology for knots in 3-manifolds constructed by glueing can be obtained by tensoring the corresponding bordered invariants. This machinery is well suited for studying the knot Floer homology of \emph{satellite knots}. Recall given a pattern knot $P\subset S^1\times D^2$ and a companion knot $K\subset S^3$, the satellite knot $P(K)$ is constructed by gluing $(S^1\times D^2, P)$ to the knot complement $X_K=S^3-nb(K)$ of $K$ so that the meridians are identified, and the longitude of $S^1\times D^2$ is identified with the Seifert longitude of $K$. For example, the knot Floer homology of satellite knots obtained by cabling or applying the Mazur pattern were studied using this tool \cite{MR3217622,MR3589337,MR3134023}. 

As mentioned above, pairing unfiltered bordered invariants of 3-manifolds with torus boundary may be taken as Lagrangian intersection Floer homology of curves on the torus. In this paper, we seek to explore a counterpart for the pairing of the filtered type A structure $\widehat{CFA}(S^1\times D^2, P)$ and the (unfiltered) type D structure $\widehat{CFD}(X_K)$, and hence obtain the knot Floer homology of the satellite knot $P(K)$. 
\subsection{The main theorem}
Our main theorem will restrict to a class of pattern knots called \emph{$(1,1)$-pattern knots}. The aforementioned cabling and Mazur pattern belong to this class, as well as the Whitehead double operator. 

\begin{defn}
A pattern knot $P\subset S^1\times D^2$ is called a $(1,1)$-pattern knot if it admits a genus-one doubly-pointed bordered Heegaard diagram.
\end{defn}
We set up some conventions in order to state the main theorem. Whenever the context is clear, genus-one doubly-pointed bordered Heegaard diagrams will just be called by genus-one Heegaard diagrams. Let $(\Sigma, \alpha^a, \beta, w, z)$ be a genus-one Heegaard diagram for $P\subset S^1\times D^2$. Note we may view the objects $(\beta, w,z)$ as embedded in $\partial (S^1\times D^2)$, and the $\alpha^a$ arcs correspond to the meridian $\mu$ and longitude $\lambda$ of $\partial (S^1\times D^2)$. So the data contained in such a bordered Heegaard diagram can be equivalently be understood as a 5-tuple $(\beta, \mu,\lambda, w, z)\subset \partial(S^1\times D^2)$ (Figure \ref{figure, convert bordered diagram to 5 tuple}). We warn the reader that this set of data depends on the choice of Heegaard diagrams, and hence is not an invariant of the pattern knot. 
For a 3-manifold $M$ with torus boundary, we denote the immersed-curve invariant as $(\widehat{HF}(M),w)\subset \partial M$ and call it the Heegaard Floer homology of $M$. The main theorem is stated below. The readers who prefer a more visual presentation may first read Example \ref{example, illustrating the main theorem} and then come back to the following formal statement.

\begin{thm}\label{Pairing thm}
Given a $(1,1)$-pattern knot $P\subset S^1\times D^2$ and a companion knot $K$ in $S^3$. Let $(\widehat{HF}(X_K), w')\subset \partial X_K$ be the Heegaard Floer homology of knot complement $X_K$ of $K$, and let $(\beta, \mu,\lambda, w, z)\subset \partial (S^1\times D^2)$ be the 5-tuple corresponding to some genus-one Heegaard diagram for $P$.
Let $h: \partial X_K\rightarrow \partial (S^1\times D^2) $ be an orientation preserving homeomorphism such that 
\begin{enumerate}
\item[(1)] $h$ identifies the meridian and Seifert longitude of $K$ with $\mu$ and $\lambda$ respectively;
\item[(2)] $h(w')=w$;
\item[(3)] there is a regular neighborhood $U\subset \partial (S^1\times D^2)$ of $w$ such that $z \in U$, $U\cap (\lambda\cup \mu)=\emptyset$, and $U \cap h(\widehat{HF}(X_K))=\emptyset$.
\end{enumerate}
Let $\alpha=h(\widehat{HF}(X_K))$. 
Then there is a chain complex isomorphism $$\widehat{CFK}(\alpha,\beta, w, z) \cong \widehat{CFK}(S^3, P(K)).$$
Moreover, if $\alpha$ is connected, this isomorphism preserves the Maslov grading and Alexander filtration. 
\end{thm}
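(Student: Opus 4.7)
The starting point is the bordered pairing theorem, which gives a $\mathbb{Z}$-filtered chain complex isomorphism
\[
\widehat{CFK}(S^3, P(K)) \;\cong\; \widehat{CFA}(S^1\times D^2, P) \boxtimes \widehat{CFD}(X_K),
\]
where the Alexander filtration on the right is recorded by the $z$-weights of the $A_\infty$ operations of the type A side. Since $P$ is $(1,1)$, the type A structure is generated by the intersections $\alpha^a \cap \beta$ in the genus-one doubly-pointed bordered diagram, and its operations together with their $z$-weights are read off directly from the domains in this diagram. Contracting the $\alpha^a$-handles embeds the Heegaard surface as $\partial(S^1\times D^2)$ and encodes $\widehat{CFA}(P)$ entirely by the 5-tuple $(\beta,\mu,\lambda,w,z)$.

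I then plan to replace $\widehat{CFD}(X_K)$ by the immersed multicurve $\widehat{HF}(X_K) \subset \partial X_K$ via Rasmussen-Hanselman-Watson, and to interpret the box tensor product (after forgetting $z$) as the Lagrangian intersection Floer complex of $\alpha$ and $\beta$ on $\partial(S^1\times D^2)$ with basepoint $w$, where $\alpha = h(\widehat{HF}(X_K))$. Conditions (1) and (2) on $h$ ensure that the Seifert-framed identification of boundaries agrees with the parameterization used to define $\widehat{CFD}(X_K)$ and that the $w$-basepoints correspond, so that this unfiltered complex agrees with $\widehat{CFK}(S^3,P(K))$ after forgetting the Alexander filtration.

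The main new work is to promote this to a filtered isomorphism by tracking the $z$-basepoint. Condition (3) isolates $z$ in a disk $U$ disjoint from $\alpha$, $\lambda$, and $\mu$, so each bigon domain between intersection points of $\alpha\cap \beta$ carries a well-defined and additive $z$-multiplicity. The crux is to show that these $z$-multiplicities match, generator-by-generator and arrow-by-arrow, the $z$-weights of the corresponding $A_\infty$ actions in the box tensor product. I would establish this by localizing: outside $U$ the curve picture and the bordered picture are identified on the nose by Rasmussen-Hanselman-Watson, while each time a bigon traverses $U$ it contributes exactly one to the $z$-count, matching the definition of the $z$-filtration on $\widehat{CFA}(P)$. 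This is the step I expect to be the principal obstacle, since the Rasmussen-Hanselman-Watson correspondence is stated in the unfiltered setting and one must verify that its bijection of domains is compatible with the finer $z$-basepoint data, not merely with $w$-counts.

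For the grading statement when $\alpha$ is connected, I would pass to a $\mathbb{Z}$-cover of $T^2\setminus\{w\}$ on which the $z$-crossing count becomes an absolute $\mathbb{Z}$-valued Alexander grading; combined with the standard Maslov grading on intersection Floer homology of two immersed curves, both gradings descend to $\widehat{CFK}(\alpha,\beta,w,z)$, with any overall shift pinned down by comparison to the unknot-companion case $P(U)\subset S^3$, whose knot Floer complex can be computed directly from the genus-one Heegaard diagram.
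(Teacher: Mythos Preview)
Your outline is headed in the right direction—box tensor on one side, Lagrangian Floer on the other—but the bridge between them is exactly where the work lies, and invoking Rasmussen--Hanselman--Watson as a black box does not supply it. RHW's pairing theorem identifies $\widehat{CFA}\boxtimes\widehat{CFD}$ with intersection Floer homology as chain complexes; it does not hand you a canonical bijection of \emph{domains} (bigons on the torus $\leftrightarrow$ domains in the bordered diagram) that you could then feed the $z$-basepoint through. Your ``localizing'' argument assumes precisely such a bijection when you say ``outside $U$ the curve picture and the bordered picture are identified on the nose.'' That is the step you have to establish, not assume.

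The paper does this directly, without quoting RHW. It places $\alpha$ and $\beta$ in a specific position on $T^2$ (quadrant-by-quadrant), lifts to the universal cover, and defines an explicit \emph{collapsing operation}: a five-step geometric retraction that takes an embedded Whitney disk $\phi$ in $\mathbb{R}^2$ to a domain $D_A$ in a cover $\tilde\Sigma$ of the bordered Heegaard surface. The nontrivial content is showing that $D_A$ is again an embedded disk—i.e.\ that no needle or bubble degenerations occur—which uses that $\widehat{CFD}(X_K)$ is reduced (excludes needles) and that $\phi$ avoids $w$ (excludes bubbles). Once this domain-level bijection is in hand, the Reeb chords on $\partial D_A$ match the type D arcs traversed by $\partial_{\tilde\alpha}\phi$, so the differential on the Floer side corresponds to a matched pair of type~A and type~D operations, and conversely. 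The $z$-count then comes along for free because $n_z$ is preserved under collapsing; this is why the step you flagged as the ``principal obstacle'' is in fact easy \emph{after} the domain bijection is built.

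For the Alexander grading the paper argues via the enhanced grading group (relating $n_z(B)-n_w(B)$ of the collapsed domain to the Alexander factor), and for Maslov it adapts the rotation-number argument from Hanselman--Rasmussen--Watson, with a small modification to handle the $\rho_\emptyset$-arrows present because $\beta$ represents an unreduced subdiagram. Your proposed $\mathbb{Z}$-cover normalization is plausible in spirit but would still need the domain-level correspondence to get off the ground.
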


\begin{rmk}
When $\alpha(K)$ is not connected, the full grading information can still be recovered when provided extra data called ``phantom arrows". Roughly, these are arrows that connects different components of $\alpha(K)$ and does not alter the chain complex obtained by pairing. In general $\alpha(K)$ is not connected, yet it always has a distinguished component which is gives a nontrivial homology class in $H_1(\partial X_K)$, and when one is interested in computing the $\tau$-invariant of the satellite knot, it suffices to restrict to the distinguished component \cite{hanselman2018heegaard}. 
\end{rmk}
\begin{exam}\label{example, illustrating the main theorem}
In practice, Theorem \ref{Pairing thm} amounts to laying the bordered Heegaard diagram of the pattern knot over the imersed-curve diagram of the knot complement. We consider the Mazur pattern $M$ actting on the right-handed trefoil $T_{2,3}$. In Figure \ref{figure, pairing Mazur and Trefoil} (a), a 5-tuple corresponding the Mazur pattern is given on the left, the immersed curve for the trefoil complement is drawn on the right, and the pairing is given in the middle. By lifting the curves to the universal cover of the torus and doing isotopy, the pairing diagram can be presented as in Figure \ref{figure, pairing Mazur and Trefoil} (b). This is the minimal intersection diagram and hence the intersection points are in one-to-one correspondence with elements in $\widehat{HFK}(M(T_{2,3}))$. 
\begin{figure}[htb!]
\begin{center}
\includegraphics[scale=0.45]{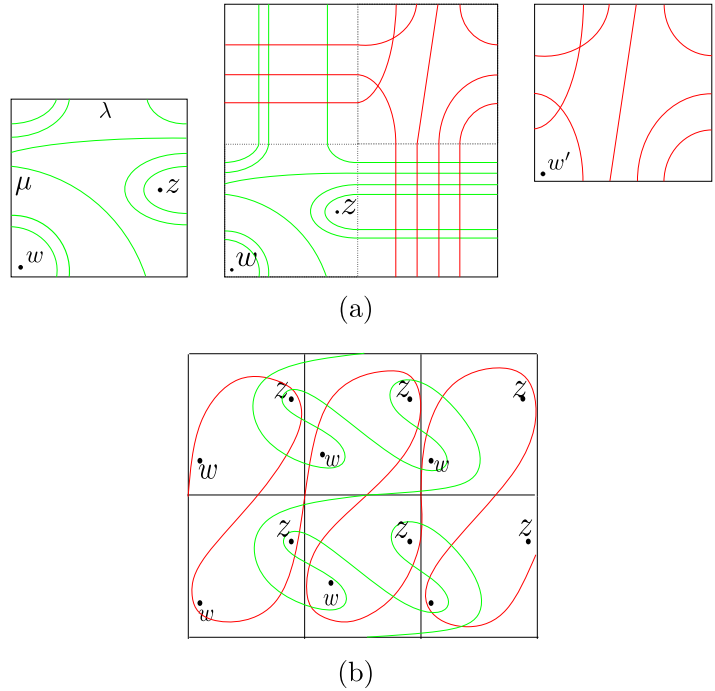}
\caption{The pairing diagram for $\widehat{CFK}(M(T_{2,3}))$}\label{figure, pairing Mazur and Trefoil}
\end{center}
\end{figure}

In addition to a quick computation of the rank of the knot Floer homology group, Theorem \ref{Pairing thm} also gives a handy way to compute the $\tau$-invariant of such satellite knots. In fact, one may repeatedly isotope $\beta(P)$ across the base point $z$ in the pairing diagram to eliminate intersection points with minimal Alexander filtration difference, and in the end only one intersection point is left, whose Alexander grading is exactly the $\tau$-invariant of the satellite knot (Figure \ref{figure, tau of Mazur of Trefoil}). This process is described in detail in Section \ref{section,algorithm for computing tau}.  
 \end{exam}
 
We point out the idea for proving Theorem \ref{Pairing thm} using Figure \ref{figure, idea of proof}. We work with the universal cover $\mathbb{C}$ of the torus. Given any embedded Whitney disk, we can push and collapse it to get a disk in a covering space of the bordered Heegaard diagram $(\Sigma, \alpha^a, \beta, w, z)$. This latter embedded disk gives rise to a type A operation in $\widehat{CFA}(P,S^1\times D^2)$, whose input of the elements in the torus algebra matches the those coming from the arcs on the $\alpha$ curve, i.e.\ type D operations in $\widehat{CFD}(X_K)$. This shows the correspondence of the differentials in the Lagrangian intersection Floer homology and in the box-tensor product. The detailed proof for Theorem \ref{Pairing thm} is in Section 3. 

\begin{figure}[htb!]
\begin{center}
\includegraphics[scale=0.55]{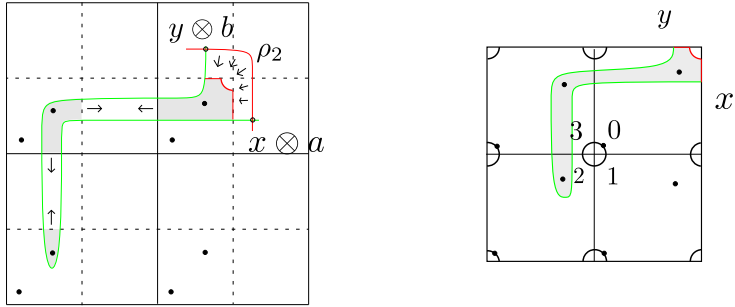}
\caption{A disk in the Lagrangian intersection pairing connecting $x\otimes a$ to $y\otimes b$ (left) can be pushed and collapsed (as indicated by the arrows) to give a disk in the bordered Heegaard diagram connecting $x$ to $y$ with compatible Reeb chords (right).}\label{figure, idea of proof}
\end{center}
\end{figure}
\subsection{Applications}
In this paper we apply Theorem \ref{Pairing thm} to study a question in knot concordance due to Hedden.
\begin{conj}[\cite{hedden2018satellites}]
The only homomorphisms on the knot concordance groups induced by satellite operators are the zero map, the identity, and the involution coming from orientation reversal.
\end{conj}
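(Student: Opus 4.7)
The plan is to use Theorem \ref{Pairing thm} to rule out large classes of satellite operators from inducing concordance homomorphisms, providing evidence toward Hedden's conjecture. A full proof is out of reach from Floer-theoretic obstructions alone, so the program will be restricted to the two-bridge family and will yield the ``glimpse'' promised in the abstract.

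First, I set up the obstruction. If a satellite operator $P$ induces a group homomorphism $\phi_P$ on the smooth concordance group $\mathcal{C}$, then $\tau\circ\phi_P\colon\mathcal{C}\to\mathbb{Z}$ is itself a homomorphism of abelian groups. In particular,
\[
\tau(P(T_{2,3}))+\tau(P(-T_{2,3}))=\tau(P(T_{2,3}\,\#\,-T_{2,3}))=\tau(P(U))=0,
\]
where the middle equality uses that $T_{2,3}\,\#\,-T_{2,3}$ is slice (so $P$ applied to it is concordant to $P(U)$), and the last equality uses that $\phi_P(0)=0$ forces $P(U)$ to be slice. Thus a single non-zero value of $\tau(P(T_{2,3}))+\tau(P(-T_{2,3}))$ suffices to eliminate $P$ as a homomorphism candidate.

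Second, for a pattern $P$ obtained from the two-bridge link $b(p,q)$ --- which is automatically a $(1,1)$-pattern --- Theorem \ref{Pairing thm} reduces the computation of $\tau(P(\pm T_{2,3}))$ to a Lagrangian intersection count on the torus: lay the 5-tuple $(\beta,\mu,\lambda,w,z)$ read off from a genus-one Heegaard diagram for $P$ over the well-known three-arc immersed curve representing $\widehat{HF}(X_{\pm T_{2,3}})$, and apply the base-point pushing algorithm from Section \ref{section,algorithm for computing tau}. Concretely, the $\beta$-curve is a simple closed curve on the torus whose slope and braiding are dictated by the continued-fraction expansion of $q/p$; one tracks its signed intersections with the trefoil curve, isotoping across $z$ to cancel pairs with consecutive Alexander filtration levels until a single surviving intersection point carries the $\tau$-value.

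The main obstacle will be packaging this intersection count into a clean closed-form expression in $(p,q)$ --- essentially a combinatorial lemma about continued fractions of $q/p$ --- and then verifying that $\tau(P(T_{2,3}))+\tau(P(-T_{2,3}))\neq 0$ for all $(p,q)$ outside a short list of exceptional cases. Those exceptions (small $p$, or parameters where the $\beta$-curve has special symmetry on the torus) must be handled separately: for each one either exhibits an explicit concordance showing that $P$ genuinely agrees with the zero map, the identity, or the involution, or deploys a finer invariant such as $\Upsilon$ or the $d$-invariants of branched covers. This strategy will not settle Hedden's conjecture in general --- $\tau$ is too coarse to detect a pattern that happens to mimic the three standard homomorphisms on a generating set --- but within the two-bridge family it yields the targeted partial result.
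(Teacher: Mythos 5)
You correctly recognize that the statement is a conjecture, not a theorem, and that the paper's contribution is partial evidence via obstructions in the $(1,1)$-pattern (equivalently, two-bridge) family. Your architecture matches the paper's: use Theorem \ref{Pairing thm} to compute $\widehat{HFK}$ of satellites, package the output as a closed formula for $\tau(P(\pm T_{2,3}))$ (Theorem \ref{Theorem, tau invariant formula}), and then impose a necessary condition forced by the homomorphism hypothesis. Where you diverge is the choice of necessary condition, and the divergence loses a substantial part of Corollary \ref{Corollary, when two-bridge patterns do not homomorphism}.

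Your obstruction is $\tau(P(T_{2,3}))+\tau(P(-T_{2,3}))=0$; that is a valid necessary condition, but strictly weaker than what the paper uses. The paper invokes Roberts' theorem, which for a slice pattern $P$ gives a constant $\epsilon(P)$ with $|\tau(P(K))-|w(P)|\tau(K)|\leq \epsilon(P)$ for all $K$; stretching along $nK$ then forces $\tau(P(K))=|w(P)|\tau(K)$ exactly. Applied to $K=\pm T_{2,3}$ this yields the two \emph{separate} constraints $\tau(P(T_{2,3}))=|w(P)|$ and $\tau(P(-T_{2,3}))=-|w(P)|$, not merely that their sum vanishes. With the formulas of Theorem \ref{Theorem, tau invariant formula}, one checks that the sum $\tau(P(T_{2,3}))+\tau(P(-T_{2,3}))$ equals $\sigma+1$ in the generic range, so it vanishes for \emph{every} pattern with $\sigma(\tfrac{p}{2}-q,q)=-1$, regardless of winding number. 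Your test therefore cannot exclude, say, a winding-number-$3$ two-bridge pattern with $\sigma=-1$, whereas the paper's two constraints do, since there $\tau(P(T_{2,3}))=2\neq 3=|w(P)|$. Since excluding all winding numbers $|w|\geq 2$ is the most conspicuous prediction of Hedden's conjecture, this gap matters; the missing ingredient is precisely Roberts' stability theorem (or Proposition 5.4 of \cite{miller2019homomorphism}). A minor additional point: the combinatorics is not packaged via continued fractions as you suggest, but via the diagrammatic alternating $\sigma$-sum (Proposition \ref{Proposition, close formula for sigma}) and the winding-number count from the symmetric Heegaard diagram (Proposition \ref{proposition, winding equal to 2 O_w }).
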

In this paper we only consider Hedden's conjecture in the smooth category, but we remark it is open in both the topological and smooth category.

Note if a pattern $P$ induces a group homomorphism, then it must be a slice pattern (i.e.\ $P(U)$ is a slice knot, where $U$ is the unknot). In this paper, we restrict our attention to unknot patterns (i.e.\ $P(U)=U$) that admit genus-one doubly-pointed Heegaard diagram. Note such patterns cannot be dealt with using the obrstruction coming from the Casson-Gordon invariant due to Miller \cite{miller2019homomorphism}.

Our first step classifies all $(1,1)$-unknot patterns using Theorem \ref{Pairing thm} and the fact that knot Floer homology detects the unknot \cite{MR2023281}. It turns out such patterns are identified with a simplest family of unknot patterns, which correspond to two-bridge links: remove a regular neighbourhood of one component of a two-bridge link leaves the other component as a pattern knot in the solid torus. 
\begin{thm}\label{Theorem, (1,1)-unknot pattern diagram gives two bridge pattern, introduction version}
Unknot patterns admitting genus-one doubly-pointed bordered Heegaard diagrams are in one-to-one correspondence with patterns determined by two-bridge links.  
\end{thm}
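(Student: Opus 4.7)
The plan is to prove the bijective correspondence in two directions.

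For the easy direction, that every pattern coming from a two-bridge link is a $(1,1)$-unknot pattern, I would proceed by construction. Given a two-bridge link $b(p,q) = L_0 \cup L_1$ in its standard 4-plat presentation, the pattern $P = L_1 \subset S^3 \setminus \nu(L_0) \cong S^1 \times D^2$ admits a genus-one doubly-pointed bordered Heegaard diagram with Heegaard surface $\partial \nu(L_0)$, the $\alpha^a$ arcs as the meridian and longitude, and the $\beta$ curve together with basepoints encoding the trace of $L_1$ determined by the bridge presentation. Since both components of a two-bridge link are unknotted, $P(U) = L_1$ is unknotted in $S^3$, so $P$ is a $(1,1)$-unknot pattern.

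For the converse direction, let $P$ be a $(1,1)$-unknot pattern with 5-tuple $(\beta, \mu, \lambda, w, z) \subset T^2$. I would apply Theorem \ref{Pairing thm} with companion $K = U$. The complement $X_U$ is itself a solid torus, whose immersed-curve invariant consists of a single essential simple closed curve bounding its meridian disk; this curve is the Seifert longitude of $U$, which under the gluing $h$ becomes $\alpha = \lambda$ on $T^2$. The pairing theorem then yields
\[ \widehat{CFK}(\lambda, \beta, w, z) \cong \widehat{CFK}(S^3, P(U)) = \widehat{CFK}(S^3, U) \cong \mathbb{F}, \]
using the unknot detection property of knot Floer homology \cite{MR2023281}. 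By isotoping $\beta$ within $T^2 \setminus \{w\}$ to minimal position with $\lambda$, standard arguments show the chain complex has vanishing differential (there are no embedded bigons between geodesic representatives of essential simple closed curves on $T^2$), so its rank equals the geometric intersection number $|\lambda \cap \beta|$. Equating with the homology rank forces $|[\beta] \cdot [\lambda]| = 1$, hence $[\beta] = \pm \mu + k\lambda$ in $H_1(T^2)$ for some integer $k$.

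It remains to identify the $(1,1)$-pattern determined by the diagram $(\beta, \mu, \lambda, w, z)$ with a two-bridge link pattern. Given the constraint on $[\beta]$, the $(1,1)$-diagram is parametrized up to mapping class group action by $k$ and the isotopy classes of $w, z$ in the complementary regions of $\beta$. I would show that this parametrization matches the classical parametrization of two-bridge links by rational tangles: the genus-one splitting exhibits $P$ as a single bridge arc in each handlebody of $S^1 \times D^2$; gluing in $X_U$ to close to $P(U) = U$ realizes the diagram as a rational tangle in $S^3$, and the closure $\mathrm{core}(S^1 \times D^2) \cup P(U)$ recovers a two-bridge link whose $L_0$-complement gives back the pattern $P$. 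The main obstacle is making this reconstruction precise: explicitly matching the combinatorial data $(k, w, z)$ with the classifying parameters $(p, q)$ of two-bridge links and verifying that this matching is bijective.
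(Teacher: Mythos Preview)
Your first step---pairing with the unknot and invoking unknot detection to force $\beta$ to meet $\lambda$ once---is exactly the paper's Lemma \ref{lemma, unknot pattern diagram feature}. Two small corrections: the isotopy must take place in $T^2\setminus\{w,z\}$ (not just $T^2\setminus\{w\}$) since you are computing $\widehat{HFK}$ and need the associated graded to have rank one; and you have omitted a constraint that comes for free from the bordered setup, namely $\beta\cdot\mu=0$. Indeed, forgetting $z$ the diagram is just a bordered Heegaard diagram for the solid torus with its standard parametrization, so $\beta$ is isotopic to $\mu$. This kills your parameter $k$ and pins down $[\beta]=\pm[\mu]$.

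The real divergence is in the identification step. The paper does not argue via rational tangles or the mapping class group. Instead, once $|\beta\cap\lambda|=1$ and $\beta\cdot\mu=0$ are known, it cuts the torus along $\mu\cup\lambda$ and classifies the resulting planar pictures combinatorially: up to symmetry they fall into a handful of types, each determined by the number $r$ of caps around the basepoints and the number $s$ of strands in a separating stripe. A connectedness argument shows the admissible pairs are exactly those with $\gcd(2|r|-1,|s|+1)=1$. Then the pattern knot is read off directly from the diagram (an arc avoiding $\beta$ followed by an arc avoiding $\alpha^a$), and an existing lemma in the literature (Lemma~4.1 of \cite{MR2708610}) identifies the resulting link $P\cup\mu$ as $b(2|s|+4|r|,\pm(2|r|-1))$. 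Surjectivity onto two-bridge links follows by matching parameters.

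Your proposed route through rational tangles is plausible in spirit but, as you yourself flag, the reconstruction is not made precise: you have not said how the positions of $w$ and $z$ in the complementary regions of $\beta$ translate into the fraction $q/p$, nor why this assignment is bijective. The paper's combinatorial classification is less elegant but completely explicit, and it delivers the exact $(p,q)$ for each diagram, which is then used downstream in the $\tau$-computations.
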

We actually prove stronger results which imply Theorem \ref{Theorem, (1,1)-unknot pattern diagram gives two bridge pattern, introduction version}: in Theorem \ref{classification of unknot pattern Heegaard diagrams} we classify all the genus-one Heegaard diagrams that give rise to unknot patterns, and in Theorem \ref{Theorem, (1,1)-unknot pattern diagram gives two bridge pattern} we give precise correspondence between such Heegaard diagrams and patterns determined by two-bridge links. 

Second, we give a formula for $\tau(P(T_{2,3}))$ and $\tau(P(-T_{2,3}))$, where $P$ is any pattern determined by a two-bridge link. Recall every two-bridge link admits a Schubert normal form parametrized by a pair of coprime integers $(p,q)$ such that $p$ is even; denote such a link by $b(p,q)$.    

\begin{thm}\label{Theorem, tau invariant formula}
Let $P$ be a pattern knot obtained from a two-bridge link $b(p,q)$ such that $q>0$. Let $w(p,q)$ be the winding number of $P$, and define $\sigma(a,b)=\sum_{i=1}^{a-1}(-1)^{\lfloor \frac{ib}{a} \rfloor}$. Then 
 $$\tau(P(T_{2,3}))=\max(\frac{|w(p,q)|+\sigma(\frac{p}{2}-q,q)}{2}+1,0),$$
 and
 $$\tau(P(-T_{2,3}))=\min(\frac{-|w(p,q)|+\sigma(\frac{p}{2}-q,q)}{2},0).$$
\end{thm}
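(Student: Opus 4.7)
The plan is to apply Theorem \ref{Pairing thm} using the explicit genus-one doubly-pointed Heegaard diagrams for two-bridge link patterns provided by Theorem \ref{Theorem, (1,1)-unknot pattern diagram gives two bridge pattern}, together with the well-known immersed-curve invariant of the trefoil complement, and then to extract $\tau$ via the isotopy-across-$z$ algorithm of Section \ref{section,algorithm for computing tau}. Because $\widehat{HF}(X_{\pm T_{2,3}})$ has a single connected distinguished component---the staircase peg corresponding to the knot Floer complex of $\pm T_{2,3}$---the Alexander grading of the surviving generator on the pairing diagram can be read off directly from the immersed-curve picture on the boundary torus.

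First I would lift the pairing diagram to the universal cover $\mathbb{R}^2$ of $\partial(S^1\times D^2)$. The lift of $\alpha$ consists of translates of a single peg of height one attached to the horizontal line representing the longitudinal direction, while the lift of $\beta$ consists of parallel straight arcs whose slope and base-point placement are determined by $(p,q)$ and whose signed intersection count with the meridian in one fundamental domain is exactly $w(p,q)$. Running the algorithm then amounts to pushing arcs of $\beta$ across $z$ and canceling adjacent pairs of intersections with minimal Alexander-difference along the peg. The $i$-th lift of $\beta$ meets the peg at a point whose signed contribution is governed by which side of the peg it lies on, and this side is recorded by the parity of $\lfloor iq/(\frac{p}{2}-q)\rfloor$ for $i=1,\dots,\frac{p}{2}-q-1$; summing these contributions produces the alternating sum $\sigma(\frac{p}{2}-q,q)$. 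Combining this with the contribution $|w(p,q)|/2$ coming from the horizontal extent of the peg and the $+1$ Alexander shift at the apex (reflecting $\tau(T_{2,3})=1$) gives the claimed formula for $\tau(P(T_{2,3}))$. The $\max$ with $0$ arises because cancellation may exhaust all of the peg intersections and continue onto the horizontal distinguished component, at which point the surviving generator has Alexander grading $0$. The formula for $\tau(P(-T_{2,3}))$ is obtained by the mirror argument, with the peg flipped vertically.

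The main obstacle, and where the genuine work lies, is the combinatorial step: showing that the signed count of surviving intersections after the isotopy algorithm is exactly $\sigma(\frac{p}{2}-q,q)$ as defined in the statement. I expect to handle this by induction using the Euclidean algorithm on the pair $(\frac{p}{2}-q,q)$, where each step corresponds to a Stern--Brocot move replacing the two-bridge link $b(p,q)$ by a simpler one and induces a matching reduction formula for the alternating floor sum. Verifying that the base points $w$ and $z$ track correctly through each reduction, and that the sign conventions agree with the orientation of $\beta$ and with the direction of the peg (which flips between the $+T_{2,3}$ and $-T_{2,3}$ cases), is the delicate part of the bookkeeping and will account for the asymmetric appearance of the $+1$ shift in the two formulas.
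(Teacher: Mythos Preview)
Your proposal has a genuine structural gap: the description of the lift of $\beta$ as ``parallel straight arcs whose slope and base-point placement are determined by $(p,q)$'' is incorrect. In the genus-one Heegaard diagrams $H(r,-s)$ classified in Theorem~\ref{classification of unknot pattern Heegaard diagrams}, the $\beta$ curve is built from $r$ nested caps around each of $w$ and $z$ together with $s$ stripe arcs, and its lift to $\mathbb{R}^2$ is a single complicated curve with many local maxima and minima, not a family of parallel line segments. Consequently the sentence ``the $i$-th lift of $\beta$ meets the peg at a point whose signed contribution is governed by which side of the peg it lies on, and this side is recorded by the parity of $\lfloor iq/(\tfrac{p}{2}-q)\rfloor$'' has no geometric content as stated: there is no indexing of intersections by $i=1,\dots,\tfrac{p}{2}-q-1$ that directly matches the floor-function parity, and you have not explained how the cap structure interacts with the trefoil peg. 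The Euclidean/Stern--Brocot induction you propose for this step is speculative; it is not clear that the Heegaard diagram for $b(p,q)$ reduces to that of a simpler two-bridge link under a Stern--Brocot move in a way that respects the pairing with the trefoil curve and the base points.

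The paper takes a different and more concrete route. It works directly with the symmetric diagram of $H(r,-s)$ and carries out the isotopy-across-$z$ algorithm explicitly by repeatedly removing the innermost $z$-cap. Each removal terminates in one of four local configurations ($U_1,U_2,L_1,L_2$), and the paper tracks the resulting A-buoys to show that only finitely many combinatorial types of intermediate diagrams occur. After all $z$-caps are removed in both the upper and lower rows (relative to the two rows of intersections coming from the trefoil peg), the possible end states are enumerated and matched case by case; in each case a single generator $x_1$ (or $x_2$) survives, and its Alexander grading is computed relative to the central intersection point $c$, whose grading is shown to be $0$ by a symmetry argument on the pairing diagram. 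The quantity $\sigma$ enters not through an induction but via a direct closed-form count (Proposition~\ref{Proposition, close formula for sigma}), and the winding number enters through the identity $w(P)=2O_w(H)$ (Proposition~\ref{proposition, winding equal to 2 O_w }). The $\max$ with $0$ arises in the case analysis when $\sigma\le -3$, not from ``exhausting peg intersections onto the horizontal component'' as you suggest.
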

\begin{rmk}
Theorem \ref{Theorem, tau invariant formula} is of independent interest. When restricting the companion knot to the trefoil or the left-handed trefoil, Theorem \ref{Theorem, tau invariant formula} unifies previous results on $\tau$-invariant of satellite knots: the $(n,1)$-cable corresponds to $b(2n,1)$, the Whitehead double corresponds to $b(8,3)$, and the Mazur pattern corresponds to $b(14,5)$ \cite{MR2372849}\cite{MR2511910}\cite{MR3217622}\cite{MR3589337}. In fact, when fixed to an aforementioned specific pattern knot, one can reprove the satellite formula using the technique in this paper.  
\end{rmk}
The assumption that a pattern $P$ induces a homomorphism on the concordance group constrains the behavior of the $\tau$-invariant under the action by $P$, i.e. $\tau(P(K))=|w(P)|\tau(K)$ for any knot $K$ (see the proof of Corollary \ref{Corollary, when two-bridge patterns do not homomorphism} in Section 6 or Proposition 5.4 of \cite{miller2019homomorphism}). This together with Theorem \ref{Theorem, tau invariant formula} implies

\begin{cor}\label{Corollary, when two-bridge patterns do not homomorphism}
Let $P$ be a pattern knot obtained from a two-bridge link $b(p,q)$ such that $q>0$. If $|w(p,q)|\neq 1$ or $\sigma(p,q)\neq -1$, then $P$ does not induce a group homorphism on the smooth knot concordance group. 
\end{cor}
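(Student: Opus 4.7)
The plan is to take the contrapositive: assume $P$ induces a homomorphism on the smooth concordance group $\mathcal{C}$, and show this forces both $|w(p,q)| = 1$ and $\sigma = -1$. The key input, recorded in the paragraph immediately preceding the corollary, is that when $P$ is a concordance homomorphism the composite $\tau\circ P\colon \mathcal{C}\to\mathbb{Z}$ is a group homomorphism, and checking on the family of $(2,2n+1)$-torus knots pins it down as $\tau\circ P=|w(P)|\cdot\tau$ (this is Proposition 5.4 of \cite{miller2019homomorphism}). Applying this identity to $K=T_{2,3}$ and $K=-T_{2,3}$, with $\tau(\pm T_{2,3})=\pm 1$, yields
\[
\tau(P(T_{2,3}))=|w(p,q)|, \qquad \tau(P(-T_{2,3}))=-|w(p,q)|.
\]

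The main step is then to substitute these two identities into the formulas of Theorem \ref{Theorem, tau invariant formula} and read off the arithmetic constraints. Writing $w:=|w(p,q)|$ and $s:=\sigma(\tfrac{p}{2}-q,q)$, the system becomes
\[
w=\max\!\Bigl(\tfrac{w+s}{2}+1,\;0\Bigr), \qquad -w=\min\!\Bigl(\tfrac{-w+s}{2},\;0\Bigr).
\]
A routine case analysis on which branch of each extremum is active finishes the argument. If the max is attained by the zero branch then $w=0$ and $s\leq -2$; if the min is attained by the zero branch then $w=0$ and $s\geq 0$; these are mutually incompatible, and similarly each mixed case collapses. In the only surviving scenario both extrema lie on their nonzero branches, giving $s=w-2$ and $s=-w$ simultaneously, hence $w=1$ and $s=-1$. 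Thus unless $|w(p,q)|=1$ and $s=-1$, no such $P$ can induce a group homomorphism.

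The one remaining piece of bookkeeping is to reconcile the quantity $s=\sigma(\tfrac{p}{2}-q,q)$ produced above with the quantity $\sigma(p,q)$ that appears in the corollary's hypothesis. I expect this to follow from an elementary identity for $\sigma(a,b)=\sum_{i=1}^{a-1}(-1)^{\lfloor ib/a\rfloor}$: splitting the sum for $a=p$ according to the residue of $i$ modulo $q$ and using the $2$-periodicity of $\lfloor ib/a\rfloor$ in such blocks should fold the tail into a signed copy of $\sigma(\tfrac{p}{2}-q,q)$, while the head contributes a bounded error that vanishes in the relevant coprimality range. This combinatorial identification is the only point requiring genuine manipulation; all of the Floer-theoretic content is already packaged inside Theorem \ref{Theorem, tau invariant formula} and the $\tau$-homomorphism constraint, so the hard part is really just this final bit of arithmetic on signed walks.
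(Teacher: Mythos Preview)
Your argument is essentially the paper's own proof: assume $P$ is a homomorphism, deduce $\tau(P(K))=|w(P)|\tau(K)$, plug in $K=\pm T_{2,3}$, and solve the resulting two-equation system from Theorem~\ref{Theorem, tau invariant formula} to get $|w|=1$, $\sigma=-1$. Your case analysis on the max/min branches is correct and more explicit than the paper's one-line ``a simple computation implies.'' One minor difference: the paper derives the constraint $\tau(P(K))=|w(P)|\tau(K)$ in-house, using Roberts' inequality $|\tau(P(K))-|w(P)|\tau(K)|\leq\epsilon(P)$ together with the amplification $K\mapsto nK$, rather than citing Miller. Either route is fine.

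Your final paragraph, however, is chasing a phantom. The quantity the argument actually produces is $\sigma(\tfrac{p}{2}-q,q)=-1$, and that is also what the paper's proof concludes (it writes only ``$\sigma$'' throughout). The symbol $\sigma(p,q)$ in the corollary statement is evidently a typographical slip for $\sigma(\tfrac{p}{2}-q,q)$; compare Question~1.8, where the correct arguments appear. There is no combinatorial identity to prove here, and in fact none holds: for $(p,q)=(8,3)$ one computes $\sigma(8,3)=1$ while $\sigma(1,3)=0$. So drop the last paragraph---your proof is already complete once you have extracted $|w|=1$ and $\sigma(\tfrac{p}{2}-q,q)=-1$.
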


Note that Hedden's conjecture in particular implies any pattern with winding number (modulo sign) greater than or equal to two does not induce homomorphism. Corollary \ref{Corollary, when two-bridge patterns do not homomorphism} confirms this within patterns obtained from two-bridge links. We actually wonder if the behavior of the $\tau$-invariant could completely answer this question. Namely,
\begin{que}
Is there a pattern $P$ with winding number $|w(P)|\geq 2$ such that $\tau(P(K))=|w(P)|\tau(K)$ for any knot $K$?
\end{que}
For slice patterns of winding number one, more subtle conditions are required to exclude the case in which the pattern is concordant (within the solid torus) to the core; the $\sigma$-function in Corollary \ref{Corollary, when two-bridge patterns do not homomorphism} is such a condition for patterns obtained from two-bridge links. Actually, examining examples with $|w|=1$ and $\sigma=-1$ leads to the following.
\begin{que}
Let $P$ be a pattern determined by a two-bridge link $b(p,q)$ with $q>0$. If $|w(P)|=1$ and $\sigma(\frac{p}{2}-q,q)=-1$, then is $P$ always concordant to the core of the solid torus? 
\end{que}
Finally, it might also be worth mentioning that it is unknown whether slice patterns of winding number one always act as the identity map in the topological category, and hence presumably it is hard to study this case by concordance invariants that are blind to the smooth-topological difference.   

\subsection{Immersed train tracks for general pattern knots}
It is natural to expect a similar immersed curve interpretation can be extended to include general pattern knots. At the course of writting this paper, it is not completely clear to the author how this can be achieved. Without genus-one bordered Heegaard diagrams, an algorithm must be given to translate the corresponding filtered bordered invariant to immersed train track on the torus. Such filtered invariant are often not reduced if we forget the filtration; this in particular prevents a direct application of the algorithm given in \cite{hanselman2016bordered}. 

One might possibly achieve this by using the bimodule point of view discussed in \cite{hanselman2019cabling}: $\widehat{CFDD}$-bimodules are expected to correspond to immersed surfaces (Lagrangians) in $T^2\times T^2$, and pairing with a $\widehat{CFA}$ whose immersed curve is $\gamma$ can be interpreted as intersecting the surface with $\gamma\times T^2$ and then projecting it to the second $T^2$. View a pattern knot equivalently as a two-component link. If one can successfully represent the $\widehat{CFDD}$-bimodule of this link complement as an immersed surface, then pairing the surface with the doubly-pointed Heegaard diagram for the pattern corresponding to the Hopf link would produce an immersed curve for the pattern knot.

In Section 7 we propose an approach in line with the spirit of working with filtered object: we introduce a notion called \emph{filtered extendability}, and give a way to produce immersed train tracks for filtered extended type D structures. Filtered extendability is an analogue of the extendability condition appeared in \cite{hanselman2016bordered}, and is automatically satisfied by all type D structures arised from pattern knots. In practice, the approach gets us the desired immersed curves and pairing. We hence speculate a favorable theory exists in general.

\subsection*{Organization} Section 2 contains preliminaries on bordered Heegaard Floer homology needed for the proof of Theorem \ref{Pairing thm}, which is given in Section 3. Section 4 contains a diagrammatic approach to compute the $\tau$-invariant. In Section 5 we prove Theorem \ref{Theorem, (1,1)-unknot pattern diagram gives two bridge pattern, introduction version}. In Section 6 we prove Theorem \ref{Theorem, tau invariant formula} and Corollary \ref{Corollary, when two-bridge patterns do not homomorphism}. Section 7 contains a brief discussion for immersed curves for general patterns. 
\subsection*{Acknowledgments} This project started when the author was a graduate student at Michigan State University and was partially supported by his advisor Matt Hedden's NSF grant DMS-1709016. The author would like to thank Matt Hedden for his enormous help. The author would also like to thank Abhishek Mallick for informing him of Ording's result used in this paper. The author is grateful to the Max Planck Institute for Mathematics in Bonn for its hospitality and support. 

\section{Preliminaries}
This section collects the relevant aspects of bordered Floer homology for 3-manifolds with torus boundary as a preparation for the proof of Theorem \ref{Pairing thm}. The experts may well skip this section. For the readers who would like to see more detail and other aspects of the theory, the author would recommend \cite{hanselman2016bordered,hanselman2018heegaard,MR3217622,MR3827056}.  

We outline the subsections: Section 2.1 recalls the definitions of a type D structure and a type A structure, the box tensor product, and how to define the filtered $\widehat{CFA}$ in terms of a genus-one doubly-pointed bordered Heegaard diagram; Section 2.2 explains how to interpret $\widehat{CFD}$ as train tracks on the torus; Section 2.3 explains gradings of the bordered Floer package.
\subsection{Bordered Floer homology}
We focus on bordered manifolds with torus boundary. To such manifolds, Lipshitz, Oszv\'ath, and Thurston associated a type D structure and a type A structure over the torus algebra up to certain quasi-ismorphism. 

The torus algebra $\mathcal{A}$ is given by the path algebra of the quiver shown in Figure \ref{figure, quiver for the torus algebra}. As a vector space over $\mathbb{F}$, $\mathcal{A}$ has a basis consisting of two idempotent elements $\iota_0$ and $\iota_1$, and six ``Reeb" elements: $\rho_1$, $\rho_2$, $\rho_3$, $\rho_{12}=\rho_1\rho_2$, $\rho_{23}=\rho_2\rho_3$, $\rho_{123}=\rho_1\rho_2\rho_3$.
\begin{figure}[ht!]
\center
\includegraphics[scale=0.5]{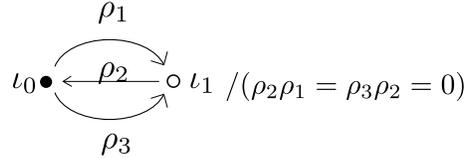}
\caption{The path algebra of this quiver with the specified relation is the torus algebra.}\label{figure, quiver for the torus algebra}
\end{figure}

Denote by $\mathcal{I}=\langle\iota_0\rangle \oplus \langle\iota_1\rangle\subset \mathcal{A}$ the ring of idempotents. A type D structure over $\mathcal{A}$ is a unital left $\mathcal{I}$-module $N$ equipped with an $\mathcal{I}$-linear map $\delta: N \rightarrow \mathcal{A}\otimes_{\mathcal{I}} N$ satisfying the compatibility condition $$(\mu\otimes \mathbb{I})\circ(\mathbb{I} \otimes \delta )\circ \delta=0.$$
Let $\delta_1=\delta$, and for $k=2,3,4,...$, inductively define maps $\delta_k=(\mathbb{I}^{\otimes {k-1}}\otimes \delta_1)\circ\delta_{k-1}$. A type D structure is bounded if $\delta_k=0$ for all sufficiently large $k$; in this paper we will always work with bounded type D structures. 

A type A structure is a right unital $\mathcal{I}$-module $M$ with a family of maps $m_{i+1}: M\otimes \mathcal{A}^{\otimes i}\rightarrow M$, $i\geq 0$ such that $$0=\sum_{i=1}^{n}m_{n-i}(m_i(x\otimes a_1\otimes\cdots\otimes a_{i-1})\otimes \cdots \otimes a_{n-1})+\sum_{i=1}^{n-2} m_{n-1} (x\otimes \cdots \otimes a_i a_{i+1} \otimes \cdots a_{n})$$ and $$m_2(x,1)=x$$ $$m_i(x,\cdots ,1, \cdots)=0,\ \ i>2$$

A type A structure $M$ and a type D structure $N$ can be paired up to give a chain complex via the \emph{box tensor product} $M\boxtimes N$. As a $\mathbb{F}$ vector space, $M\boxtimes N$ is isomorphic to $M\otimes_{\mathcal{I}} N$. The differential is defined by $$ \partial (x\otimes y)=\sum_{i=0}^{\infty}(m_{i+1} \otimes \mathbb{I}_{N})(x\otimes \delta_i(y))$$
Requiring the type D structure to be bounded guarantees the sum in the above equation is finite, and hence the box tensor product is well-defined.

Given two 3-manifolds $Y_1$ and $Y_2$ with torus boundary, let $h_1:T^2\rightarrow \partial(Y_1)$ and $h_2:-T^2\rightarrow \partial Y_2$ be diffeomorphisms parametrizing the boundaries, and let $Y=Y_1\cup _{h_1\circ h_2^{-1}}Y_2$ be the glued-up manifold. Lipshitz, Ozsv\'{a}th, and Thurston associated to $(Y_1, h_1)$ a type A structure $\widehat{CFA}(Y_1)$ and $(Y_2,h_2)$ a type D structure $\widehat{CFD}(Y_2)$, and showed that the box tensor product $\widehat{CFA}(Y_1)\boxtimes \widehat{CFD}(Y_2)$ is homotopy equivalent to $\widehat{CF}(Y)$. In the case when there is a knot $K\subset Y_1$ such that the induced knot in the glued-up manifold $Y$ is null-homologous, then one can associate to $K\subset Y_1$ a filtered type A structure $\widehat{CFA}(Y_1,K)$ so that the box tensor product $\widehat{CFA}(Y_1,K)\boxtimes \widehat{CFD}(Y_2)$ is homotopy equivalent to $\widehat{CFK}(Y,K)$.

All of the aforementioned objects are defined in terms bordered Heegaard diagrams and involve counting certain J-holomorphic curves. For our purpose, below we only recall the definition of $\widehat{CFA}(Y_1,K)$ when the bordered Heegaard diagram is of genus one. In this case, one could avoid (hide) the J-holomorphic curve theory. 

A genus-one doubly-pointed bordered Heegaard diagram $D$ is a 5-tuple $(\Sigma,\alpha^a,\beta,w,z)$ such that
\begin{itemize}
\item[$\bullet$]$\Sigma$ is a compact, oriented surface of genus one with a single boundary component.
\item[$\bullet$]$\alpha^a$ consists of a pair of arcs $(a_1^a,a_2^a)$ properly embedded in $\Sigma$, such that $a_1^a\cap a_2^a=\emptyset$ and the ends of $a_1^a$ and $a_2^a$ appears alternatively on $\partial \Sigma$ 
\item[$\bullet$]$\beta$ is an embedded closed curve in the interior $\Sigma$ such that $\Sigma\backslash \beta$ is connected, and intersects $\alpha^a$ transversely.
\item[$\bullet$]A basepoint $w$ on $\partial \Sigma \backslash \partial \alpha^a$, and a basepoint $z$ in $\text{Int}(\Sigma)\backslash (\alpha^a\cup\beta)$, where $\text{Int}(\Sigma)$ denotes the interior of $\Sigma$.
\item[$\bullet$]Label the arcs on $\partial \Sigma$, so that $(\partial \Sigma, \alpha^a, w)$ is as shown in Figure \ref{figure, pointed match circle}. We use the symbol $I\in\{12,23,123\}$ to denote the arc obtained by concatenation of the arcs labeled by $1$, $2$, $3$ accordingly. 
\end{itemize}
\begin{figure}[ht!]
\begin{center}
\includegraphics[scale=0.45]{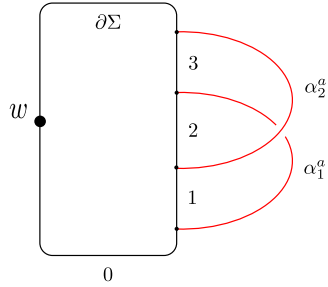}
\caption{Pointed match circle}\label{figure, pointed match circle}
\end{center}
\end{figure}
Such a diagram specifies a $3$-manifold with torus boundary and an oriented knot. The 3-manifold is obtained by attaching a 3-dimensional 2-handle to $\Sigma\times [0,1]$ along $\beta\times \{1\}$, and the knot is the union of two arcs on $\Sigma$: on connects $z$ to $w$ in the complement of $\beta$, the other connects $w$ to $z$ in the complement of $\alpha^a$. We do not explain, but simply point out the data also specifies a parametrization of the torus boundary. If a knot $K$ in a bordered $3$-manifold $Y_1$ can be represented by a genus-1 bordered Heegaard diagram, then we define a filtered type A module $\widehat{CFA}(Y_1,K)$ as following:
\begin{itemize}
\item[(1)]$\widehat{CFA}(Y_1,K)$ is generated by the set $\mathcal{G}=\{x|\ x\in \beta \cap \alpha^a\}$ as a $\mathbb{F}$ vector space.  
\item[(2)] For $x\in \mathcal{G}$, $\mathcal{I}$ acts on it as following
$$
x \cdot \iota_{0}=
\begin{cases}
x \ \text{if}\ x\in \alpha^a_1\cap \beta\\
0 \ \text{otherwise}
\end{cases}
$$
$$
x \cdot \iota_{1}=
\begin{cases}
x \ \text{if}\ x\in \alpha^a_2\cap \beta\\
0 \ \text{otherwise}
\end{cases}
$$
This induces a right $\mathcal{I}$-action on $\widehat{CFA}(Y_1,K)$.
\item[(3)]View $\Sigma=T^2-\text{Int}(B)$, where $B$ is a disk. Let $\tilde{\Sigma}$ be the covering space of $\Sigma$ which is obtained from the universal cover $\mathbb{C}$ of $T^2$ by removing the lifts of $B$. For the maps $m_{n+1}: M\otimes \mathcal{A}^{\otimes n}\rightarrow M$, $n\geq 0$, first define
$$m_{n+1}(x,\rho_{i_1},\cdots,\rho_{i_n})=\sum_{y\in \mathcal{G}} \#\mathcal{M}(x,y) y.$$
Here $i_j\in\{1,2,3,12,23,123\}$ for $j=1,\cdots,n$, and $\#\mathcal{M}(x,y)$ is the count (modulo 2) of index 1 embedded disks in $\tilde{\Sigma}$, such that when we traverse the boundary of such a disk with the induced orientation, we may start from a lift of $x$, traverse along an arc on (some lift of) $\alpha^a$, then along the arc ${i_1}$ on (some lift of) $\partial B$, $\cdots$, along the arc ${i_n}$, along an arc on $\alpha^a$ to a lift of $y$, finally it traverse along an arc on some lift of $\beta$ from $y$ to $x$.
Also define 
$$m_2(x,1)=x$$ $$m_i(x,\cdots ,1, \cdots)=0,\ \ i>2.$$
The above three equations determine $m_{n+1}$.
\item[(4)]Each term in $m_i$ has a relative Alexander grading difference that will specified in Subsection 2.2. 
\end{itemize}

\subsection{Gradings of the bordered Floer package}
The bordered Floer invariants are graded by certain (coset spaces of) non-commutative groups. When restricted to manifolds with torus boundary, the relevant grading group $G$ is defined to be
$$ G= \langle ( m;i,j) |\ m,i,j\in \dfrac {1}{2}\mathbb{Z} ,i+j\in \mathbb{Z} \rangle$$
with the group law
$$(m_1,i_1,j_1)\cdot (m_1,i_1,j_1)=(m_1+m_2+\dfrac{1}{2}\begin{vmatrix} i_{1} & j_{1} \\ i_{2} & j_{2} \end{vmatrix};i_1+i_2,j_1+j_2)$$ 
Here $m$ is called the \emph{Maslov component}, and $(i,j)$ is called the \emph{$\text{spin}^c$ component}.
In the presence of a knot we would also like to record the Alexander grading of the corresponding invariants, and this leads to using the enhanced grading group $\widetilde{G}=G\times \mathbb{Z}$. The new $\mathbb{Z}$ summand is called the \emph{Alexander factor}.

There are two elements in $\widetilde{G}$ will be relevant to us
$$\lambda=(1;0,0;0)\ \ \ \ \ \ \mu=(0;0,0;-1)$$

The torus algebra $\mathcal{A}$ is graded by $\widetilde{G}$ by setting 
$$
\begin{aligned}
&gr(\iota_i)=(0;0,0;0),\ \ \ \ \  i=1,2\\
&gr(\rho_1)=(-\dfrac{1}{2};\dfrac{1}{2},-\dfrac{1}{2};0)\\
&gr(\rho_2)=(-\dfrac{1}{2};\dfrac{1}{2},\dfrac{1}{2};0)\\
&gr(\rho_3)=(-\dfrac{1}{2};-\dfrac{1}{2},\dfrac{1}{2};0)
\end{aligned}
$$
and require $gr(\rho_I\cdot\rho_J)=gr(\rho_I)gr(\rho_J)$, for $I,J\in \{1,2,3,12,23,123\}$.

A type D structure $\widehat{CFD}(Y_2)$ decomposes as direct sum over $\text{spin}^c$-structures of $Y_2$. Fixing a $\text{spin}^c$ structure $\mathfrak{s}_2$, the corresponding component is graded by certain right coset space of $\widetilde{G}$
$$gr: \widehat{CFD}(Y_2,\mathfrak{s}_2)\rightarrow \widetilde{G}\slash \sim $$
The definition of this grading function involves utilizing some concrete Heegaard diagram and is not necessary for our purpose. Instead, we recall $gr$ satisfies $gr(\delta(x))=\lambda^{-1}gr(x)$ and $gr(\rho_I\otimes x)=gr(\rho_I)\cdot gr(x)$.

Similarly, a filtered type A structure decomposes as direct sum over $\text{spin}^c$-structures of $Y_1$. Fixing a $\text{spin}^c$ structure $\mathfrak{s}_1$, the corresponding component is graded by certain left coset space of $\widetilde{G}$
$$gr: \widehat{CFA}(Y_1,K, \mathfrak{s}_1) \rightarrow \sim\backslash\widetilde{G} $$
The property that will be relevant to us is if $B$ is a domain connecting $x$ to $y$, then
$gr(x)gr(B)=gr(y)$, where $$gr(B)=(-e(B)-n_x(B)-n_y(B); gr(\partial^\partial B); n_w(B)-n_z(B)).$$ Here $e(B)$ is the \emph{Euler measure}, $\partial^\partial B$ is the sequence $(\pm\rho_{i_1},\cdots \pm\rho_{i_k})$ of Reeb chords appearing at the boundary of $B$ (where the sign indicates the orientation), and $gr(\partial^\partial B)$ refers to the $\text{spin}^c$ component of $gr(\rho_{i_1})^{\pm 1}\cdots gr(\rho_{i_k})^{\pm 1}.$

The gradings on the type D and type A structure induces a grading on the box tensor product
$$
\begin{aligned}
gr:\widehat{CFA}(Y_1,K,\mathfrak{s}_1)\boxtimes \widehat{CFD}(Y_2,\mathfrak{s}_2)&\rightarrow \sim\backslash \widetilde{G} \slash \sim\\
x\otimes y&\mapsto gr(x)gr(y)
\end{aligned}
$$
Two elements $x_1\otimes y_1, x_2\otimes y_2 \in \widehat{CFA}(Y_1,K,\mathfrak{s}_1)\boxtimes \widehat{CFD}(Y_2,\mathfrak{s}_2)$ corresponds to the same $\text{spin}^c$ structure if and if there exist $M$ and $A$ such that $gr(x_1\otimes y_1)=gr(x_2\otimes y_2) \lambda^M \mu^A$. In the case, $M$ is the Maslov grading difference: $M(x_1\otimes y_1)-M(x_2\otimes y_2)$, and $A$ is the Alexander grading difference: $A(x_1\otimes y_1)-A(x_2\otimes y_2)$.

\subsection{Type D structure and immersed train tracks}
We recall how to represent a type D structure as immersed train tracks in a punctured torus.

A type D structure $N$ can be represented by an decorated graph. Let $N_i=\iota_i\cdot N$, $i=0,1$, then $N=N_0\oplus N_1$. Let $B_i$ be a basis for $N_i$, $i=1,2$. Fixing such a basis $B=B_0\cup B_1$ for $N$, we construct a decorated graph $\Gamma$. The vertices are in one-to-one correspondence with elements in $B$, and we label a vertex with $\bullet$ if it corresponds to an element in $B_0$, otherwise we label it with $\circ$. For two vertices corresponding to basis elements $x$ and $y$, we put a directed edge from $x$ to $y$ labeled with $I$ whenever $\rho_I\otimes y$ is a summand in $\delta(x)$, where $I\in \{\emptyset,1, 2, 3, 12, 23, 123\}$ .
We call a decorated graph is \emph{reduced} if none of the edges is labeled by $\emptyset$.

A reduced decorated graph can equivalently be represented by an immersed train track in the punctured torus $(T,w)$. More specifically, let $T=\mathbb{R}^2/\mathbb{Z}^2$, let $w=(1-\epsilon,1-\epsilon)$, and let $\alpha$ and $\beta$ be the image of the $y$ and $x$-axes. To construct the immersed train track, embed the vertices of the decorated graph into $T$ so that the $\bullet$ vertices lie on $\alpha$ in the interval $0\times [\frac{1}{4},\frac{3}{4}]$, and the $\circ$ vertices lie on $\beta$ in the interval $[\frac{1}{4},\frac{3}{4}]\times 0$, and then embed the edges in $T$ according to its label according to the rule as shown in Figure \ref{figure, train track assignment}.  We require the edges to intersect transversely. 
\begin{figure}[ht!]
\begin{center}
\includegraphics[scale=0.45]{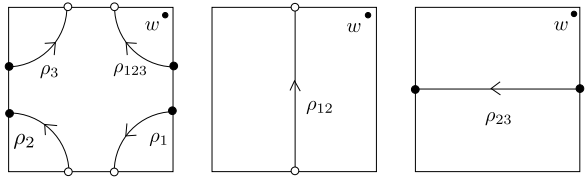}
\caption{Basic rules for representing the type D structures as train tracks.}\label{figure, train track assignment}
\end{center}
\end{figure}

In general, the train tracks thus obtained are usually not immersed curves. Work in \cite{hanselman2016bordered} shows that for type D structures arised from 3-manifolds with torus boundary, one can always pick some particularly nice basis, so that the train tracks obtained from the corresponding decorated graphs are immersed curves (possibly decorated by local systems).

\section{Proof of the main theorem}
\begin{proof}[Proof of Theorem \ref{Pairing thm}]
Let $\mu_K$ and $\lambda_K$ be the meridian and longitude of the companion knot $K$. Let $\alpha(K)$ be a train track representing $\widehat{CFD}(X_K,\mu,\lambda)$. For convenience, we assume $\alpha(K)$ comes from a restriction of $\widehat{HF}(X_K)$. ($\widehat{HF}(X_K)$ represents an exented type D structure, and $\alpha(K)$ is the curve-like sub-diagram of $\widehat{HF}(X_K)$ representing the underlying type D structure.) Let $H$=$(\Sigma, \alpha^a, \beta, w, z)$ be a genus-one bordered diagram for $(S^1\times D^2, P)$ corresponding to the standard meridian-longitude parametrization of the torus boundary. We first place $\alpha(K)$ and $(\beta,w,z)$ in a specific position on the torus $T^2$: Identify $T^2$ as the obvious quotient space of the squre $[0,1]\times [0,1]$ and divide the square into four quadrants by the segments $\{\frac{1}{2}\}\times [0,1]$ and $[0,1]\times \{\frac{1}{2} \}$. Include $\alpha(K)$ into the first quadrant and extend it horizontally/vertically. Include $H$ into the third quadrant so that $w$ is placed near $(0,0)$ and $\alpha^a$ is on the boundary of the third quadrant, then forget $\Sigma$ and $\alpha^a$, and extend $\beta$ horizontally/vertically. See Figure \ref{figure_pairing}. For the ease of notation, we set $\alpha=\alpha(K)$. 

\begin{figure}[ht!]
\center
\includegraphics[scale=0.5]{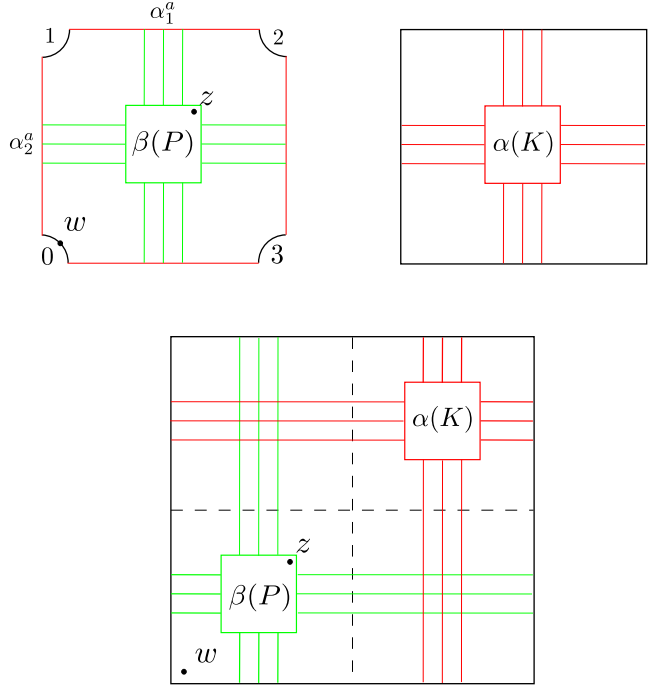}
\caption{Putting $\alpha(K)$ and $\beta(P)$ in a specific position.}\label{figure_pairing}
\end{figure}

We claim it suffices to prove 
\begin{equation}\label{Equation, box_tensor_product}
\widehat{CFK}(T^2, \alpha,\beta,w,z)\cong \widehat{CFA}(P,w,z)\boxtimes \widehat{CFD}(X_K,\mu_K,\lambda_K)
\end{equation}

To see the claim, first note the above placement of $\alpha$ and $\beta$ can be viewed as the result applying a specific representative of the homeomorphism $h$ as stated in the theorem. Secondly, regular homotopies of curve-like train tracks do not change the Lagrangian intersection Floer homology (Lemma 35 of \cite{hanselman2016bordered}).  

It is easy to see the isomorphism in \ref{Equation, box_tensor_product} as vector spaces: The intersections of $\alpha$ and $\beta$ only occur in the second and fourth quadrants, and those in the second quadrant are in one-to-one correspondence with the tensor product of the $\iota_0$ components of $\widehat{CFA}(P,w,z)$ and $\widehat{CFD}(X_K,\mu_K,\lambda_K)$, while those in the fourth correspond to the tensor products of the $\iota_1$ components.

We move to analyze the differentials. For convenience, we work with the universal cover $\pi: \mathbb{R}^2\rightarrow T^2$ of $T^2$. Let $\tilde{\beta}$ be a connected component of the $\pi^{-1}(\beta)$. If $\alpha$ is connected, we let $\tilde{\alpha}$ be a connected component of the $\pi^{-1}(\alpha)$. Otherwise, let $\alpha'$ be a lift of $\alpha$ to $\mathbb{R}^2$, and let $\tilde{\alpha}=\cup_{i\in \mathbb{Z}}t^i (\alpha')$, where $t$ is the horizontal covering translation. Note $\widehat{CFK}(\tilde{\alpha},\tilde{\beta},\pi^{-1}(w),\pi^{-1}(z))=\widehat{CFK}(\alpha,\beta,w,z)$.

Let $x=x_0\otimes x_1$ and $y=y_0\otimes y_1$ be two intersection points of $\alpha$ and $\beta$. Given a holomorphic disk connecting some lift $\tilde{x}$ and $\tilde{y}$ of $x$ and $y$ that contributes to the differential of $\widehat{CFK}(\tilde{\alpha},\tilde{\beta})$, we claim there is a corresponding matching type D and type A operation giving the desired differential via the box-tensor product operation. 

To prove the claim, we first explain how a holomorphic disk as above induces a type A operation in $\widehat{CFA}(P,w,z)$. To do this, we define a \textit{collapsing operation} on $\mathbb{R}^2$ that sends holomophic disks in $\mathbb{R}^2$ to holomorphic disks in $\Sigma$. The collapsing operation is defined to be the composition of the following five operations (see Figure \ref{collapsing operation}):
\begin{itemize}
\item[(Step 1)]Assume the $w$-base point has coordinate $[\epsilon,\epsilon]$ on $[0,1]\times [0,1]$, we hollow an open disk of radius $\sqrt{2}\epsilon$ around every integer point in $\mathbb{R}^2$. 
\item[(Step 2)]Enlarge the holes to a hook shaped region by pushing the boundary, and while doing so, let $\tilde{\alpha}$ move along accordingly. 
\item[(Step 3)]Enlarge the holes one more time to a rounded square by pushing the boundary.
\item[(Step 4)]Collapse the lifts of the second quadrant.
\item[(Step 5)]Collapse the lifts of the fourth quadrant.
\end{itemize}
\begin{figure}
\begin{center}
\includegraphics[scale=0.55]{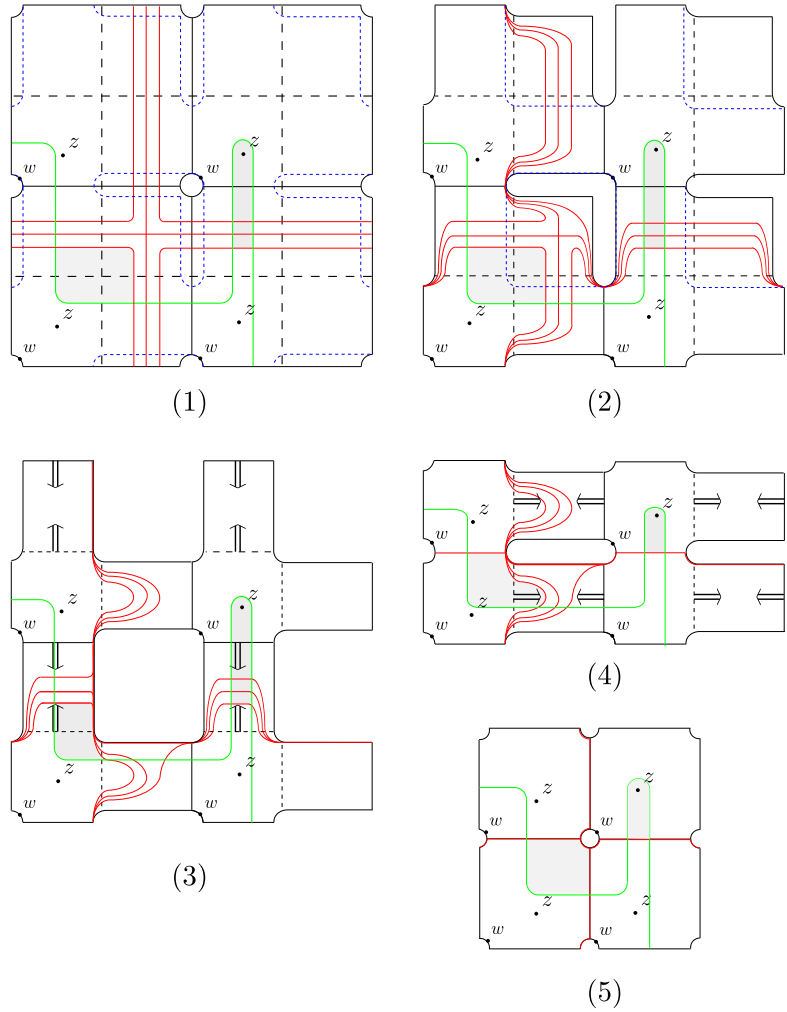}
\caption{Five steps in the collapsing operation.}\label{collapsing operation}
\end{center}
\end{figure}

We choose the enlargement in Step 2 and 3 so that after Step 4 and 5, each hollowed region is bounded by a circle of radius $\sqrt{2}\epsilon$. Denote the resulting space of the collapsing operation by $\tilde{\Sigma}$. 

Note $\tilde{\Sigma}$ can be identified with a covering space of the bordered diagram $(\Sigma, \alpha^a, \beta, w,z)$: the circles are the lifts of the pointed circle $(\partial \Sigma,w)$, $\tilde{\beta}$ is sent to a curve that covers $\beta$, and the horizontal and vertical segments connecting the circles are lifts of $\alpha^a$. 

The arcs on $\tilde{\alpha}$ are sent to arcs on $\tilde{\Sigma}$ that traverse along lifts of $\alpha^a$ and boundary circles according to the following rule:
\begin{itemize}
\item[(1)] For $I\in \{1, 2, 3, 12, 23, 123\}$, a $\rho_I$-arc in the lifts of the first quadrant is sent to $\rho_I$ on the pointed match circle.
\item[(2)] Arcs in the lifts of the second/fourth quadrant are projected horizontally/vertically to the lifts of the $\alpha^a$.  
\end{itemize}
 
Let $\phi: D^2\rightarrow \mathbb{R}^2$ be a holomorphic disk connecting some lift of $x=x_0\otimes x_1$ and $\tilde{y}=y_0\otimes y_1$, further assume $\phi$ does not cross the $w$-base points and has Maslov index one. We will write $D_{\phi}$ for the domain of $\phi$, and use them interchangeably by abusing notations. Let $D_A$ be the image of $D_{\phi}$ under the collapsing operation, this is a domain connecting $x_0$ and $y_0$. We claim $D_A$ determines an embedded disk in $\tilde{\Sigma}$, and the Reeb chords appearing on $\partial D_A$ is given by $\partial_{\tilde{\alpha}}\phi$. Assuming this claim, we see the differential induced by $\phi$ has a correspondent in the box tensor product. Since $D_A$ is an embedded disk, it gives a type A operation in $\widehat{CFA}(P,w,z)$. On the other hand, $D_A$ being embedded implies with the induced orientation, the Reeb chords on $\partial D_A$ are oriented consistently with the convention to produce the immersed curves of a type D module. As $\partial_{\tilde{\alpha}}D_{\phi}$ gives the same sequence of Reeb chords as $\partial D_A$, the type D operation in $\widehat{CFD}(X_K,\mu_K,\lambda_K)$ determined by $\partial_{\tilde{\alpha}}D_{\phi}$ pairs with the type A operation induced by $D_A$. So a differential in $\widehat{CF}(\alpha,\beta)$ corresponds to a differential in $\widehat{CFA}(P,w,z)\boxtimes \widehat{CFD}(X_K,\mu_K,\lambda_K)$.

\begin{figure}
\begin{center}
\includegraphics[scale=0.6]{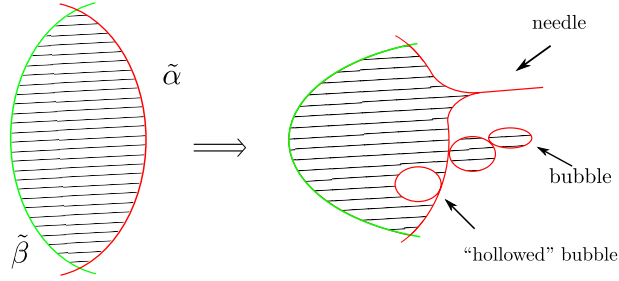}
\caption{Degeneraitions}\label{figure, degenerations}
\end{center}
\end{figure}
We move to prove the aforementioned claim. Suppose the claim is not true, in view of the construction of the collapsing operation, then part of $\partial_{\tilde{\alpha}}\phi$ are pinched together during the collapsing operation, creating needle and bubble degeneration as shown in Figure \ref{figure, degenerations}. So it suffices to prove such degeneration do not happen. To see needle degeneration do not exist, simply note that the tip of a needle would correspond to an $\emptyset$-arrow in $\widehat{CFD}(X_K,\mu,\lambda)$, which contradicts to our assumption that the type-$D$ module is reduced. To see bubble degeneration do not exist, we separate the discussion into two steps. First, we observe that there are no bubble degeneration bounding disks. If not, as the boundary of the bubble consists of $\alpha$-arcs and Reeb chords, the disk it bounds must contain some lifts of the $w$-base point; this would imply $D_{\phi}$ also contains the $w$-base point and hence contradicts to our assumption. Secondly, as we may now assume no bubbles bound disks, the existence of bubbles would imply there is a ``hollowed" bubble: if we orient the bubble counterclockwise, the region $D_A$ appears on its left (see Figure \ref{figure, degenerations}), yet such a bubble again implies that $D_A$ contains the $w$-base point: the boundary of the ``hollowed" bubble corresponds to a loop in $\tilde{\Sigma}$ that consists of $\alpha$-arcs and Reeb chords that do not cross any lifts of the $w$-base point, and any outward region abut it must contain some lift of the $w$-base point (as shown in Figure \ref{figure, no hollow bubble}). This finishes the proof of the claim.

\begin{figure}
\begin{center}
\includegraphics[scale=0.6]{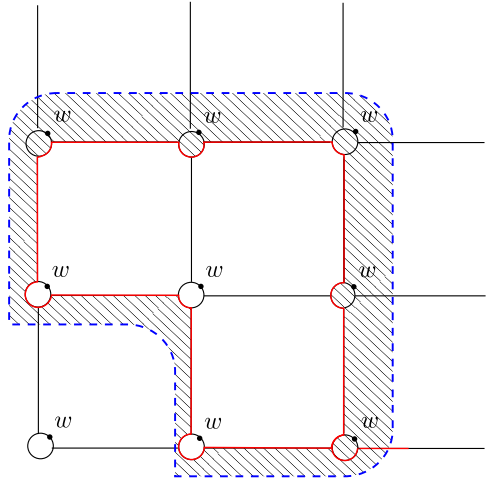}
\caption{Degeneraitions}\label{figure, no hollow bubble}
\end{center}
\end{figure}
Conversely, suppose there is a differential in $\widehat{CFA}(P,w,z)\boxtimes \widehat{CFD}(X_K,\mu_K,\lambda_K)$, then we know there is a disk $D_A$ in $\tilde{\Sigma}$ corresponding to the type A operation, and there is a curve in $\widehat{CFD}(X_K,\mu,\lambda)$ corresponding to the type D operation. Inverting the collapsing operation, we see there is a holomorphic disk giving the desired differential in $\widehat{CFK}(\alpha,\beta)$.

Now we move to prove the isomorphism also preserves the relative Alexander grading. Symmetry of the knot Floer homology would then imply the absolute Alexander grading are also preserved. From now on, we will assume $\alpha$ is connected for convenience. 

Recall from the preliminaries that bordered Floer invariants are graded by certain coset spaces of the enhanced grading group $\widetilde{G}$. For considering the Alexander grading, it suffices to use a simpler group $G_A$, which is obtained from $\widetilde{G}$ by forgetting the Maslov component. Abusing the notation, we will still denote the grading function by $gr$, even though its value are now in (coset spaces of) of $G_A$.

Given $x=x_0\otimes x_1$, $y=y_0\otimes y_1\in \widehat{CFA}(P)\boxtimes\widehat{CFD}(X_K)$, let $\tilde{x}=\tilde{x}_0\otimes \tilde{x}_1$ and $\tilde{y}=\tilde{y}_0\otimes \tilde{y}_1$ be the corresponding lifts. Let $\tilde{P_0}$ be a path on $\tilde{\beta}$ connecting $\tilde{y}_0$ to $\tilde{x}_0$, and $\tilde{P_1}$ be a path on $\tilde{\alpha}$ connecting $\tilde{x}_1$ to $\tilde{y}_1$.  Then $\tilde{P_0}\cup \tilde{P_1}$ bounds a domain $\tilde{B}$ in $\mathbb{R}^2$. Under the covering projection, $\tilde{B}$ gives rise a domain $B'$ in $T^2$; by subtracting or adding copies of $T^2$, we may assume $B'$ does not contain the $w$-base point. Note we can also perform the collapsing operation on $T^2$ to get to $\overline{\Sigma}$, and this gives rise to a domain $B\subset \overline{\Sigma}$. Denote by $\rho(\tilde{P_1})$ the sequence of Reeb chords determined by $\tilde{P_1}$ (the order is induced by the orientation of $\tilde{P_1}$). Note $gr(\partial^{\partial} B)=gr(\rho(\tilde{P_1}))$, and $gr(y_1)=gr(\rho(\tilde{P_1}))^{-1}gr(x_1)$, and $gr(y_0)=\mu^{n_z(B)-n_w(B)}gr(x_0)gr(\partial^{\partial}B)$ (recall $\mu=(0,0;-1)$). Therefore 
\begin{align*}
gr(y_0\otimes y_1)=& \mu^{n_z(B)-n_w(B)}gr(x_0)gr(\partial^{\partial}B)gr(\rho(\tilde{P_1}))^{-1}gr(x_1)\\
=&\mu^{n_z(B)-n_w(B)}gr(x_0\otimes x_1)
\end{align*}
Finally, note $n_z(\tilde{B})-n_w(\tilde{B})=n_z(B)-n_w(B)$. Therefore, the relative Alexander grading induced by pairing the bordered Floer invariants equals that of the Lagrangian Floer chain complex $\widehat{CFK}(\alpha,\beta,w,z)$. 

For the Maslov grading, one only needs to modify the argument in Section 2.3 and 2.4 of \cite{hanselman2018heegaard}. Here we point out the extra cares needed to be taken in this case, and refer the reader to \cite{hanselman2018heegaard} for details. Note the curve $\beta\subset \partial(S^1\times D^2)$ is actually the immersed curve corresponding to a subdiagram of the decorated diagram for $\widehat{CFD}(P, w, z)$. The argument in \cite{hanselman2018heegaard} deals with the Maslov grading in the case when pairing two reduced bordered invariants. The extra care needed for the non-reduced case is understanding the effect of $\emptyset$-arrows: when dealing with such arrows, we degenerate it into a folded line segment, hence both the area contribution and adjusted area contribution would be zero, and the adjusted path contribution is $-1$. With this at mind, the argument in \cite{hanselman2018heegaard} can be adapted to the current setting. It is worth pointing out the immersed curves, as Lagrangian, are in general not embedded and sometimes are even obstructed. Therefore, we recall the definition of Maslov grading difference used in \cite{hanselman2018heegaard} for the convenience of computation.
\begin{defn}
Let $\gamma_0$ and $\gamma_1$ be immersed train tracks in $T^2$, and let $x,y\in \gamma_0\cap \gamma_1$. Suppose there is a path $p_i$ from $x$ to $y$ on $\gamma_i$, $i=0,1$, such that $p_0-p_1$ lifts to a closed loop $l$ in $\mathbb{R}^2\backslash \mathbb{Z}^2$. The Maslov grading difference $m(y)-m(x)$ is twice the number of lattice points enclosed by $l$ (where each point is counted with multiplicity the winding number of $l$) plus $\frac{1}{\pi}$ times the net total rightward rotation along the smooth segments of $l$.
\end{defn}

\end{proof}

\section{Computing the $\tau$-invariant of satellite knots}\label{section,algorithm for computing tau}
In this section, we give a way to compute the $\tau$-invariant by manipulating the pairing diagram for $\widehat{CFK}(P(K))$ when $P$ is a $(1,1)$-pattern.

Recall the Alexander filtration on the hat version knot Floer chain complex induces a spectral sequence, and the $\tau$-invariant can be defined as the Alexander grading of the cycle surviving the infinity page. Each time when we pass from one page to the next, it amounts to cancel differentials that connects elements of the minimal Alexander filtration difference. Such cancellations can be done in the diagram: continuing with the notation in Theorem \ref{Pairing thm}, one can isotope the curve $\beta$ to eliminate pairs of intersection points of minimal filtration difference by pushing the curve across embedded Whitney disks. At the end of such isotopies, only one intersection point is left, which corresponds to the cycle that survives the infinity page. Hence the Alexander grading of this last intersection point is the $\tau$-invariant of the satellite knot. 

In the practice of carrying out this procedure we need to remember the filtration difference of the intersection points. To do this, we introduce the so called \textit{A-buoys}, which will be arrows attached to the $\beta$ curve. To explain how the A-buoys work, we first give the following lemma. 

\begin{lem}\label{lemma, Alexander grading difference is intersection number}
Using the notation in Theorem \ref{Pairing thm}, let $x$, $y$ be two intersection points of $\alpha$ and $\beta$. Let $l$ be an arc on $\beta$ from $x$ to $y$, and let $\delta_{w,z}$ be a straight arc connecting $w$ to $z$. Then $$A(y)-A(x)=l \cdot \delta_{w,z}$$.
\end{lem}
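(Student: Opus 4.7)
The plan is to realize $A(y) - A(x)$ first as a multiplicity difference $n_z(B) - n_w(B)$ of an auxiliary 2-chain $B$ in $T^2$, and then to translate that multiplicity difference into the signed intersection number with $\delta_{w,z}$.

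For the first step, I work in the universal cover $\pi\colon\mathbb{R}^2\to T^2$. Lift $l$ to an arc $\tilde{l}$ from $\tilde x$ to $\tilde y$ in some lift of $\beta$, and connect $\tilde y$ back to $\tilde x$ by a 1-chain $\tilde P$ on the preimage $\pi^{-1}(\alpha)$ (concatenating across components if necessary, which is harmless in $\mathbb{R}^2$). The 1-cycle $\tilde l + \tilde P$ bounds a 2-chain $\tilde B$ in $\mathbb{R}^2$; set $B = \pi_*(\tilde B)$. The Alexander grading computation at the end of the proof of Theorem \ref{Pairing thm} yields $gr(y_0\otimes y_1) = \mu^{n_z(B)-n_w(B)}\, gr(x_0\otimes x_1)$, and extracting the Alexander factor (using $\mu=(0;0,0;-1)$) gives $A(y)-A(x) = n_z(B)-n_w(B)$.

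For the second step, I invoke the standard multiplicity-jump formula: for any smooth arc $\gamma$ from $w$ to $z$ transverse to $\partial B$ and disjoint from its corners, the multiplicity $n_p(B)$ jumps by $\pm 1$ each time $\gamma$ crosses a component of $\partial B$, with sign determined by the orientations; summing these jumps gives $n_z(B)-n_w(B) = \partial B\cdot\gamma$. Taking $\gamma = \delta_{w,z}$ and invoking condition (3) of Theorem \ref{Pairing thm}, I may arrange $\delta_{w,z}\subset U$ to be disjoint from $\alpha$; hence $\delta_{w,z}$ meets $\partial B$ only along its $\beta$-portion $l$, so $\partial B \cdot \delta_{w,z} = l\cdot\delta_{w,z}$, and combining with the previous step yields the desired identity.

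The main place to be careful is the bookkeeping of signs: the induced orientation of $\partial\tilde B$ must be compatible with the chosen orientations on $\tilde l$ and $\tilde P$, and with the orientation of $T^2$ used in defining $l\cdot\delta_{w,z}$. No sophisticated new input is needed beyond the grading identity imported from Theorem \ref{Pairing thm}, but this is the one step where a sign error could creep in if one is not attentive.
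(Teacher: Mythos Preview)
Your argument is correct and follows essentially the same route as the paper's proof: build a domain with $\beta$-boundary $\pm l$ and $\alpha$-boundary disjoint from $\delta_{w,z}$, express $A(y)-A(x)$ as the $n_w,n_z$ multiplicity difference, and convert that to $l\cdot\delta_{w,z}$ via the boundary-crossing count. The only cosmetic difference is that the paper invokes the standard Whitney-disk formula $A(y)-A(x)=n_w(D)-n_z(D)$ directly (with $\partial_\beta D=-l$) rather than routing through the grading computation in the proof of Theorem~\ref{Pairing thm}, which accounts for the apparent sign swap between your $n_z(B)-n_w(B)$ and the paper's $n_w(D)-n_z(D)$.
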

\begin{proof}[Proof of Lemma \ref{lemma, Alexander grading difference is intersection number}]
Let $D$ be a Whitney disk connecting $x$ to $y$ such that $\partial_{\beta}D=-l$. Then $A(y)-A(x)=n_w(D)-n_z(D)=-\partial D \cdot \delta_{w,z}$. Note $\partial_{\alpha}D \cdot \delta_{w,z}=0$ (Figure \ref{figure_pairing}) and hence the lemma follows.
\end{proof}

With this lemma understood, note if we perform an isotopy of $\beta$ by pushing it across an embedded Whitney disk that contains the $z$-base point, the (algebraic) intersections of $\beta$ with $\delta_{w,z}$ are changed. To remedy, we put a right amount of small arrows on $\beta$ whenever such an isotopy is performed, and then when we count the Alexander filtration difference, we count both the algebraic intersection between the corresponding arc on $\beta$ an $\delta_{w,z}$, together with intersections of this arc and these newly added small arrows. These small arrows are the so-called A-buoys.  

\begin{exam}\label{example, tau of Mazur of Trefoil}
We continue considering the satellite knot $M(T_{2,3})$, where $M$ is the Mazur pattern. Note we first cancel some differentials whose filtration difference are one (Left of Figure \ref{figure, tau of Mazur of Trefoil}), and then we are left with five intersection points $x_i$, $i=1,\cdots ,5$ (Right of Figure \ref{figure, tau of Mazur of Trefoil}). Note the filtration difference between $x_3$ and $x_4$ is two as indicated by the A-buoys, while $x_1$ and $x_2$, $x_4$ and $x_5$ has filtration difference one, coming from the intersection with $\delta_{w,z}$. So we cancel the differentials of filtration difference one first, leaving $x_3$ as the remaining intersection point. Once we figure out the relative Alexander grading of $\widehat{HFK}(M(T_{2,3}))$ using the diagram  on the left of Figure \ref{figure, tau of Mazur of Trefoil}, we use the symmetry of the knot Floer homology we can also determine the absolute Alexander grading $x_3$. Finally $\tau(M(T_{2,3}))=A(x_3)=2.$
\begin{figure}
\begin{center}
\includegraphics[scale=0.6]{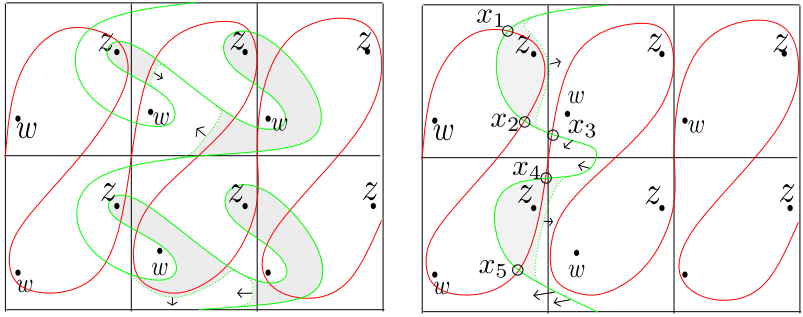}
\caption{Computing the $\tau$-invariant for $M(T_{2,3})$: $x_3$ is the last intersection point left and $\tau(M(T_{2,3}))=A(x_3)=2.$}\label{figure, tau of Mazur of Trefoil}
\end{center}
\end{figure}
\end{exam}
\section{The classification of $(1,1)$-unknot patterns}
We call a pattern $P$ to be an \emph{unknot pattern} if the satellite knot constructed by applying $P$ to the unknot is isotopic to the unknot. Knot Floer homology detects the unknot: a knot $K$ is the unknot if and only if $\widehat{HFK}(K)\cong \mathbb{F}$ \cite{MR2023281}. Exploiting this fact and Theorem \ref{Pairing thm} we give a classification of $(1,1)$ unknot patterns (i.e. unknot patterns that admit a genus-one Heegaard diagram).

We first classify all the genus-one Heegaard diagrams that give rise to unknot patterns.
\begin{thm}\label{classification of unknot pattern Heegaard diagrams}
Nontrivial genus-one doubly-pointed bordered Heegaard diagrams that give rise to unknot patterns are in one-to-one correspondence with pairs of integers $(r,s)$ such that $|r|\geq  1$, and $\gcd(2|r|-1, |s|+1)=1$.
\end{thm}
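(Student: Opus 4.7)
The plan is to combine a combinatorial normalization of genus-one Heegaard diagrams with Theorem \ref{Pairing thm} and the knot-Floer detection of the unknot. First I would normalize: put $T^2 = \mathbb{R}^2/\mathbb{Z}^2$ with $w$ at the origin and $\alpha^a = \mu \cup \lambda$ equal to the coordinate axes, so that $\beta$ is (up to isotopy in $T^2\setminus\{w\}$) a simple closed curve of some slope, and $z$ lies in one of the complementary regions of $\beta \cup \mu \cup \lambda$. The orientation-preserving diffeomorphisms of $T^2$ preserving the meridian--longitude pair and $w$ (essentially the reflection through $w$, together with the Dehn-twist moves on the genus-one surface that preserve the pattern up to Heegaard equivalence) reduce this data to a pair of integers $(r,s)$, where $r$ records a slope-like invariant of $\beta$ and $s$ records the placement of $z$ among the complementary regions. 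The "trivial" diagrams excluded from the statement correspond to $r=0$, i.e.\ the degenerate slope for which $\beta$ is isotopic to a component of $\alpha^a$.

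Next, I would apply Theorem \ref{Pairing thm} with companion knot the unknot $U$. The immersed-curve invariant of the solid torus $X_U$ is a single simple closed curve on $\partial X_U$, and under the identification $h$ of Theorem \ref{Pairing thm} it is isotopic to the longitude $\lambda$ of the pattern solid torus (being the meridian of the complementary solid torus $S^3\setminus \mathrm{nbd}(U)$). Hence $\widehat{HFK}(P(U)) \cong H_*\bigl(\widehat{CFK}(\lambda, \beta, w, z)\bigr)$, and by the unknot detection theorem of Ozsv\'ath--Szab\'o \cite{MR2023281}, $P$ is an unknot pattern if and only if this chain complex has rank-one homology. Concretely, after placing $\beta$ and $\lambda$ in minimal position on $T^2$, one must be able to cancel all but one intersection point via a sequence of bigon (Whitney-disk) cancellations that avoid \emph{both} $w$ and $z$; this is exactly the diagrammatic procedure set up in Section \ref{section,algorithm for computing tau}.

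Finally, I would carry out this cancellation analysis on the universal cover. Lifting $\lambda$ and $\beta$ to $\mathbb{R}^2$ with the preimages of $w$ removed, the intersections organize into a chain of bigons whose collapse order is governed by a continued-fraction-type combinatorics in the slope parameter of $\beta$; the $z$-basepoint obstructs a given cancellation precisely when its lift lands inside the corresponding bigon. A direct bookkeeping will then show that the rank-one condition is equivalent to a coprimality constraint between the slope parameter and the offset parameter, which in the chosen normalization reads exactly $\gcd(2|r|-1,|s|+1)=1$. The appearance of $2|r|-1$ reflects a parity constraint on the slope of $\beta$ forced by the requirement that 2-handle attachment along $\beta$ produce a solid torus with the prescribed meridian--longitude parametrization. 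I expect step three to be the main obstacle: the forward direction (rank-one implies the gcd condition) is a careful combinatorial obstruction argument, while the reverse direction is most cleanly established by exhibiting, for each valid $(r,s)$, an explicit realization of the resulting pattern as a component of a two-bridge link, thereby paving the way for the refinement in Theorem \ref{Theorem, (1,1)-unknot pattern diagram gives two bridge pattern}.
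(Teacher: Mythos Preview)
Your proposal assembles the right ingredients—Theorem~\ref{Pairing thm}, unknot detection, and a combinatorial classification—but the logical structure is off in a way that would make step three very hard to execute, and your interpretation of the parameters $(r,s)$ does not match what the theorem actually classifies.

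The key point you are missing is where the gcd condition actually comes from. In the paper's argument, the rank-one condition from unknot detection is used \emph{only} to establish that $\beta$ can be isotoped (in the complement of both base points) to meet $\lambda$ in a single point: pairing with the unknot companion means intersecting $\beta$ with a curve parallel to $\lambda$, so rank one forces $|\beta\cap\lambda|=1$ geometrically. Once that constraint is in place, the possible configurations of $\beta$ in the fundamental square fall into finitely many combinatorial types, each parametrized by a pair $(r,s)$ where $r$ counts nested loops around the base points and $s$ counts strands in the stripe separating $w$ from $z$—not a slope and a $z$-offset as you propose. The condition $\gcd(2|r|-1,|s|+1)=1$ then arises from the requirement that the resulting $\beta$ be a \emph{connected} curve, i.e.\ that one has a valid Heegaard diagram at all; it has nothing further to do with the rank-one homology.

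Your plan conflates these two steps by trying to extract the gcd directly from a cancellation analysis of $\widehat{CFK}(\lambda,\beta,w,z)$. Without first using rank one to force $|\beta\cap\lambda|=1$, there is no clean two-integer parametrization of the diagrams: forgetting $z$ already pins the slope of $\beta$ to that of $\mu$ (since the underlying singly-pointed diagram is the standard solid torus), so there is no variable ``slope-like invariant'' to record, and curves in that fixed homology class on the twice-punctured torus are not classified by a single offset. The paper's order of operations—unknot detection first to collapse the intersection count, then pure curve combinatorics for connectedness—is precisely what makes the classification tractable and what yields the gcd.
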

\begin{figure}[ht!]
\begin{center}
\includegraphics[scale=0.5]{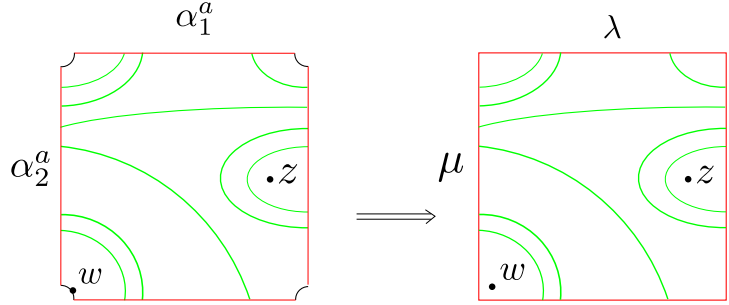}
\caption{The 5-tuple from a bordered Heegaard diagram}\label{figure, convert bordered diagram to 5 tuple}
\end{center}
\end{figure}

\begin{proof}
Recall a genus-one Heegaard diagram $(\Sigma, \alpha^a, \beta, w, z)$ is equivalent to a 5-tuple $(\beta, \mu,\lambda, w, z)\subset T^2$ (Figure \ref{figure, convert bordered diagram to 5 tuple}). So to classify bordered diagrams for unknot patterns, we equivalently classify their corresponding $5$-tuples. 

\begin{lem}\label{lemma, unknot pattern diagram feature}
Let $(\beta, \mu,\lambda, w, z)\subset T^2$ be a 5-tuple constructed from a genus-one Heegaard diagram, then $\beta\cdot \mu=0$ and $\beta\cdot \lambda=\pm 1$, where we arbitrarily orient the curves. In particular, if the $5$-tuple gives rise to an unknot pattern, then up to isotopy, $\lambda$ and $\beta$ has a single intersection point.
\end{lem}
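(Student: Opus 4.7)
The plan is to reduce the lemma to a single geometric statement: the curve $\beta$ in the $5$-tuple is isotopic on $T^2$ to the meridian $\mu$, up to orientation, i.e.\ $[\beta] = \pm[\mu]$ in $H_1(T^2)$. Granting this, the three conclusions follow at once: $\beta\cdot\mu = \pm(\mu\cdot\mu) = 0$; $\beta\cdot\lambda = \pm(\mu\cdot\lambda) = \pm 1$; and the ``single intersection point'' conclusion follows from the standard surface-topology fact that two essential simple closed curves on $T^2$ with algebraic intersection $\pm 1$ can be isotoped to meet transversely in exactly one point.

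To establish $[\beta]=\pm[\mu]$, I would first identify the Heegaard surface $\Sigma$ with $T^2$ minus a small open disk around $w$, so that $\beta\subset\Sigma\subset T^2$ is a simple closed curve on $T^2$ and the arcs $\alpha^a$ close up across the removed disk to the closed curves $\mu$ and $\lambda$. In the bordered construction, $\beta$ is the attaching circle of a $2$-handle, so it bounds a compressing disk inside the resulting bordered $3$-manifold $S^1\times D^2$. The inclusion $i\colon T^2\hookrightarrow S^1\times D^2$ induces a surjection $i_\ast\colon H_1(T^2)\to H_1(S^1\times D^2)\cong\mathbb{Z}$ whose kernel is the cyclic subgroup generated by $[\mu]$. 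Since $\beta$ bounds in $S^1\times D^2$, we get $i_\ast[\beta]=0$, hence $[\beta]=n[\mu]$ for some $n\in\mathbb{Z}$. A simple closed curve on $T^2$ has primitive homology class when essential and zero class when null-homotopic, so $n\in\{0,\pm 1\}$.

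The one delicate step I anticipate is excluding $n=0$, i.e.\ ruling out that $\beta$ is null-homotopic on $T^2$. If such a $\beta$ bounded a disk in $T^2$ disjoint from $w$, it would bound a disk in $\Sigma$, and the $2$-handle attachment would create an essential $S^2$ in the bordered $3$-manifold, preventing it from being a solid torus. If the bounding disk in $T^2$ contains $w$, then $\beta$ is parallel to $\partial\Sigma$ in $\Sigma$, and the resulting manifold again fails to be $S^1\times D^2$. Either way, null-homotopy contradicts the standing hypothesis that the diagram represents $(S^1\times D^2,P)$, and we conclude $n=\pm 1$.

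Finally, the ``In particular'' clause follows immediately from $\beta\cdot\lambda=\pm 1$ combined with the surface-topology fact cited in the first paragraph; observe that the unknot-pattern hypothesis is not actually needed for this conclusion, but it is precisely the case required in the subsequent classification of $(1,1)$-unknot patterns.
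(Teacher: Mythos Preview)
Your argument for the algebraic intersection numbers $\beta\cdot\mu=0$ and $\beta\cdot\lambda=\pm 1$ is correct and is a clean homological alternative to the paper's approach, which simply observes that forgetting $z$ reduces to the standard bordered diagram for the solid torus.

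However, your treatment of the ``In particular'' clause has a genuine gap. The isotopy in question must take place in the complement of the basepoints $w$ and $z$: the $5$-tuple is only meaningful up to such isotopies, and the subsequent classification of unknot-pattern diagrams uses exactly this. The surface-topology fact you invoke produces an isotopy on all of $T^2$, which is free to cross $z$; in $T^2\setminus\{w,z\}$ the minimal geometric intersection of $\beta$ with $\lambda$ can be much larger than $|\beta\cdot\lambda|$. Concretely, take any $(1,1)$-pattern $P$ with $P(U)$ not the unknot, for instance a $(p,q)$-cable with $|p|,|q|>1$. Your argument still gives $\beta\cdot\lambda=\pm 1$, yet by Theorem~\ref{Pairing thm} the Lagrangian pairing of $\beta$ with the immersed curve of the unknot complement (a copy of $\lambda$) computes $\widehat{HFK}(P(U))$, which has rank strictly greater than one; hence $\beta$ and $\lambda$ cannot be isotoped to a single intersection point while avoiding the basepoints. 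So the unknot hypothesis is not superfluous: the paper uses it, via Theorem~\ref{Pairing thm} and the fact that $\widehat{HFK}$ detects the unknot, to force the geometric intersection in the basepoint complement down to one.
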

\begin{proof}[Proof of Lemma \ref{lemma, unknot pattern diagram feature}]
Note if we ignore the $z$-base point, then the doubly pointed Heegaard diagram descends to a bordered diagram for the solid torus with standard parametrization of the boundary. Hence if we remove $z$ in the 5-tuple, up to isotopy it is represented by the diagram shown in Figure \ref{figure, no z bordered diagram}. This implies $\beta\cdot \mu=0$, and $\beta\cdot \lambda=\pm 1$.
\begin{figure}[ht!]
\begin{center}
\includegraphics[scale=0.4]{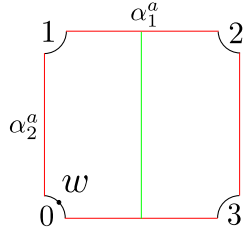}
\caption{Bordered diagram for the solid torus}\label{figure, no z bordered diagram}
\end{center}
\end{figure}
Now assume the $5$-tuple is constructed from an unknot pattern $P$. Pair this diagram with the immersed curve associated to the unknot complement (which is a horizontal line parallel to $\lambda$) using Theorem \ref{Pairing thm}. Then $\widehat{HFK}(P(U))\cong \mathbb{F}$ implies up to isotopy in the complement of $w$ and $z$, $\beta$ can be arranged to intersect $\lambda$ geometrically once.

\end{proof}

\begin{figure}[htb!]
\begin{center}
\includegraphics[scale=0.5]{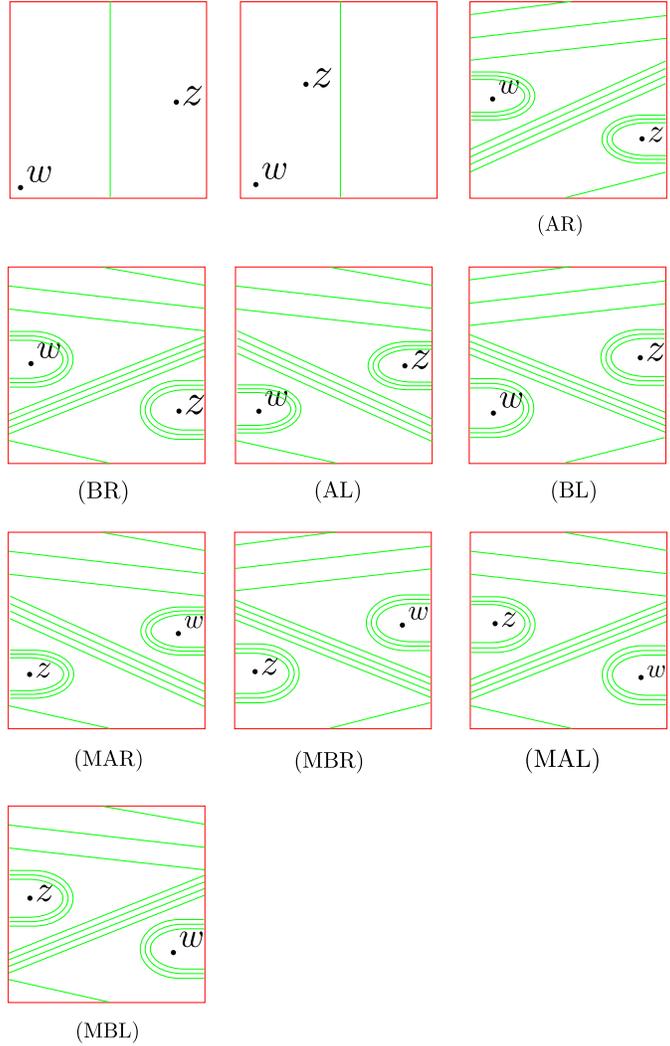}
\caption{Ten types of fundamental regions of bordered Heegaard diagrams for unknot patterns}\label{figure, classifying_fundamental_regions}
\end{center}
\end{figure}

Up to isotopy, assume $\mu$ and $\lambda$ intersect geometrically at one point, cut $T^2$ open along $\mu$, $\lambda$, and call the resulting region \emph{the fundamental region}. Then Lemma \ref{lemma, unknot pattern diagram feature} implies, up to isotopy of $\beta$ in the complement of $w$ and $z$, the diagram of the $5$-tuple in the fundamental region is one of the ten types as shown in Figure \ref{figure, classifying_fundamental_regions}.

We further classify the last eight nontrivial types. Note that the last four types are vertical reflections of the (AR), (BR), (AL), and (BL), so it suffices to classify (AR), (BR), (AL), and (BL). Further note that (AR) and (BR) are horizontal reflections of (AL) and (BL), so it suffices to classify (AL) and (BL).

Note in all the nontrivial cases, the diagram is determined by a pair of numbers: the number of loops around $w$ and $z$, and the number of strands in the middle stripe that separates $w$ and $z$. This is because the rest of the arcs are determined by the condition $\beta \cdot \mu=0$. Yet simply parametrizing each cases by this pair of numbers is not very convenient. Instead, we further group (AR) and (BR) together, call it type (R). Similarly, the other cases are grouped in pair to give (L), (MR), (ML).
\begin{figure}[htb!]
\begin{center}
\includegraphics[scale=0.5]{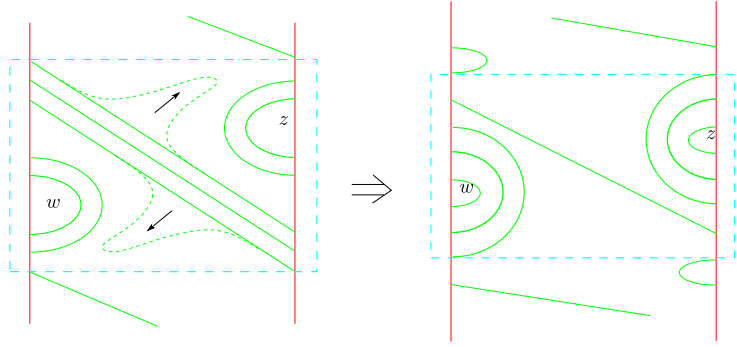}
\caption{Converting (AL) to (BL). The main regions are boxed. By an isotopy, the main region of an (AL) diagram can be converted into one of a (BL) diagram.}\label{figure, convert AL to BL}
\end{center}
\end{figure}

We explain why we group the pairs together using the example of (AL) and (BL). Observe that we may do an isotopy of (AL) as in Figure \ref{figure, convert AL to BL}, so that the region containing the loops and the middle stripe is the same as the corresponding region in case (BL). We call this region the \emph{main region}. Note that by the condition $\beta\cdot \mu=0$, the number of loops $r$ and strands $s$ in the stripe in the main region determines the rest of the diagram. In particular, the pair $(r,s)$ will determine whether the resulting diagram is of type (AL) or (BL). In summary, type (L) diagrams are in one-to-one correspondence with pairs of non-negative integers $(r,s)$ such that $r\geq 1$ and the corresponding $\beta$-curve has a single connected component. 

We characterize the pairs $(r,s)$ whose resulting $\beta$-curve has a single connected component.
\begin{lem}\label{lemma, r,s that gives a single curve}
Given a pair of non-negative integers $(r,s)$ such that $r\geq 1$, then the resulting $\beta$-curve in the main region of a type (BL) diagram with $r$ loops and $s$ bridges has a single connected component if and only if $\gcd(2r-1,s+1)=1$.
\end{lem}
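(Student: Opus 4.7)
\emph{Proof plan.} The lemma is a purely combinatorial question about a multicurve $\beta \subset T^2$ whose local structure near the main region is determined by the parameters $(r,s)$. My plan is to reduce the count of connected components of $\beta$ to an orbit count of a permutation, and then to identify that permutation with a cyclic rotation whose orbit count is the desired $\gcd$.

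First, I would describe the inside and outside matchings of $\beta$ along the boundary of the main region. The upper boundary of the main region carries $2r+s$ endpoints of $\beta$ and the lower boundary carries $s$; inside the main region, the $r$ loops match the $2r$ loop endpoints in the adjacency pattern around the basepoint, and the $s$ bridges pair the remaining top endpoints with the bottom endpoints in an order-preserving way, giving a fixed-point-free involution $\iota_{\mathrm{in}}$ on the endpoint set. The complement of the main region is an annulus containing $z$, and the non-crossing constraint for arcs in the $z$-punctured annulus together with the algebraic identity $\beta\cdot \mu=0$ pin down a second involution $\iota_{\mathrm{out}}$ uniquely, via a Jordan curve argument with a bit of orientation bookkeeping.

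Second, the components of $\beta$ are in bijection with the orbits of the composition $\sigma=\iota_{\mathrm{in}}\circ\iota_{\mathrm{out}}$. I would label the endpoints by residues modulo $s+1$ adapted to the $s$ bridges, and then trace $\sigma$ directly to show that, up to conjugation, it acts as the cyclic shift $x\mapsto x+(2r-1)$ on $\mathbb{Z}/(s+1)\mathbb{Z}$. Heuristically, each passage through the main region advances the active position by $2r-1$ (from the interplay of the $r$ loop pairs with the bridges), while the return through the outside region contributes the one wrap around $z$ that yields the modulus $s+1$.

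Finally, the cyclic shift by $k$ on $\mathbb{Z}/n\mathbb{Z}$ has $\gcd(k,n)$ orbits, so $\beta$ has $\gcd(2r-1,s+1)$ connected components, which equals $1$ exactly when $\gcd(2r-1,s+1)=1$. The main obstacle is the middle paragraph: pinning down $\iota_{\mathrm{out}}$ and computing $\sigma$ explicitly requires a careful planarity argument in the punctured annulus, and the identification with the $(2r-1)$-shift is where the oddness of $2r-1$ actually shows up. Once that identification is in place, the rest is a routine orbit count.
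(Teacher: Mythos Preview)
Your high-level plan---reduce the component count to an orbit count for a product of two involutions and then identify the resulting permutation with a cyclic shift---is exactly the right kind of argument, and it is in the same spirit as the paper's proof. The paper, however, makes a different and more efficient choice of transversal: instead of cutting $\beta$ along the boundary of the main region, it records the intersections of $\beta$ with the meridian $\mu$ inside the main region, labeling them $0,1,\dots,2r+s$. Traversing $\beta$ then gives the one-line recursion $\epsilon_{i+1}a_{i+1}-\epsilon_i a_i\equiv 2r-1 \pmod{2(s+2r)}$, where $\epsilon_i$ is the sign of the intersection at $a_i$. Connectedness is thus equivalent to $2r-1$ generating $\mathbb{Z}_{2(s+2r)}$, i.e.\ to $\gcd(2r-1,2(s+2r))=1$; the modulus $s+1$ appears only at the very end, from $\gcd(2r-1,2(s+2r))=\gcd(2r-1,s+1)$ since $2r-1$ is odd and $2(s+2r)\equiv 2(s+1)\pmod{2r-1}$.

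That last point is where your plan has a genuine gap. You assert that $\sigma=\iota_{\mathrm{in}}\circ\iota_{\mathrm{out}}$ is conjugate to the shift $x\mapsto x+(2r-1)$ on $\mathbb{Z}/(s+1)\mathbb{Z}$, which would force the number of components to equal $\gcd(2r-1,s+1)$. This is false in general: for $(r,s)=(2,2)$ one has $\gcd(3,3)=3$, but tracing the curve (or the signed recursion above in $\mathbb{Z}_{12}$) shows there are only two components; for $(r,s)=(3,4)$ one has $\gcd(5,5)=5$ but three components. What actually happens is dihedral rather than cyclic: the permutation you obtain is a rotation on $\mathbb{Z}_{2(s+2r)}$, and the two involutions together identify $k$ with $-k$, so $\sigma$-orbits can merge in pairs. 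The count of components is therefore not a bare $\gcd$, and the heuristic ``one wrap around $z$ yields modulus $s+1$'' does not produce $s+1$ directly from the geometry. The correct route is the paper's: work in $\mathbb{Z}_{2(s+2r)}$, prove transitivity is equivalent to $\gcd(2r-1,2(s+2r))=1$, and only then reduce arithmetically to $\gcd(2r-1,s+1)=1$. If you want to keep your boundary-of-the-main-region transversal, you will need to track the dihedral action rather than a single cyclic shift, and aim only for the transitivity criterion rather than an exact orbit count.
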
 

\begin{figure}[ht!]
\begin{center}
\includegraphics[scale=0.5]{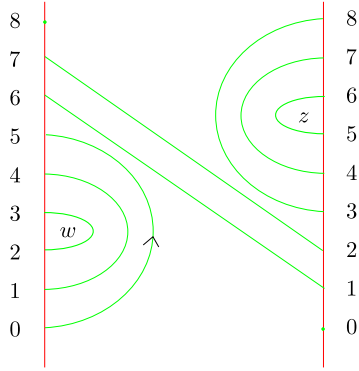}
\caption{The main region of a type (BL) diagram}\label{figure, main region of BL}
\end{center}
\end{figure}

\begin{proof}[Proof of Lemma \ref{lemma, r,s that gives a single curve}]
Label the intersection points of $\beta$ and $\mu$ in the main region by $0,\cdots,2r+s$ (Figure \ref{figure, main region of BL}). Let $a_0=0$, traverse a connected component of $\beta$ in main region starting from the out-most loop around $w$, and denote the intersection points with $\mu$ by $a_1,\cdots, a_n$. Note there is a sequence of numbers $\{\epsilon_i\}\in \{\pm 1\}$ such that $$\epsilon_{i+1}a_{i+1}-\epsilon_i a_i\equiv 2r-1 \ (\text{mod}\ 2(s+2r)).$$
In fact, we may take $\epsilon_i$ to be sign of the intersection of $\beta$ and $\mu$ at $a_i$. 
Now $\beta$ has a single connected component if and only if the subgroup generated by $2r-1$ in $\mathbb{Z}_{2(s+2r)}$ contains the element $0$, $1$ or $-1$, ..., $2r+s$ or $-(2r+s)$. This means $2r-1$ generates $\mathbb{Z}_{2(s+2r)}$. Hence this is equivalent to $\gcd(2r-1,2(s+2r))=1$, which is same as $\gcd(2r-1,s+1)=1$.
\end{proof}
Therefore, each of the cases (L), (R), (ML), and (MR) are in one-to-one correspondence with such pairs $(r,s)$. To uniformly parametrizes all four cases, we allow $r$ and $s$ to be both positive and negative. We set the signs of the parameters $(r,s)$ corresponding to cases (R), (L), (MR), and (ML) to be $(+,+)$, $(+,-)$, $(-,+,)$, and $(-,-)$ respectively.     
\begin{figure}[htb!]
\begin{center}
\includegraphics[scale=0.4]{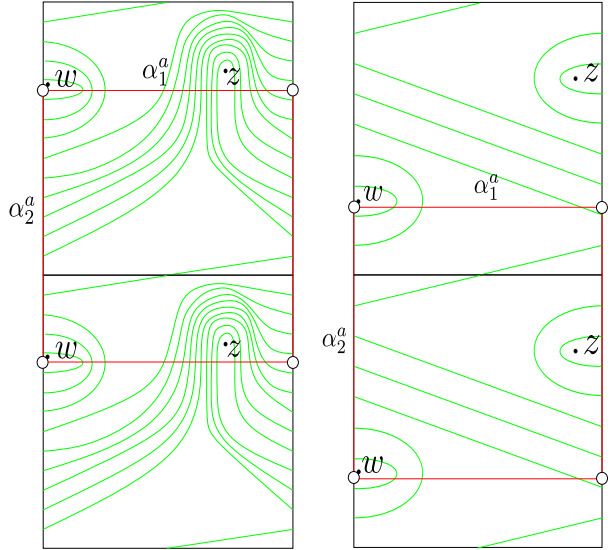}
\caption{Converting diagrams of 5-tuple into bordered Heegaard diagrams. On the left, a type (AR) diagram and its bordered Heegaard diagram. On the right, a type (BL) diagram and its bordered Heegaard diagram. }\label{figure, convert 5 tuple to bordered}
\end{center}
\end{figure}

Finally, one can covert the diagram in the fundamental region to a bordered Heegaard diagram (See Figure \ref{figure, convert 5 tuple to bordered}for examples). This finishes the proof of Theorem \ref{classification of unknot pattern Heegaard diagrams}. 
\end{proof}

Now we recognize the patterns from the doubly pointed diagrams. Eventually, we will identify such patterns as those obtained from \emph{2-bridge links}. All 2-bridge links (knots) admit a presentation called the \emph{Schubert normal form}. Such a normal form is parametrized by a pair of coprime integers $p$ and $q$ such that $p>0$, $0< |q| < \frac{p}{2}$, and is denoted by $b(p,q)$. (See Figure \ref{figure, 2 bridge links} for an example.) In general, $b(p,q)$ is the mirror image of $b(p,-q)$, $b(p,q)$ is a two-component link if and only if $p$ is even, and $b(p,q)$, $b(p',q')$ are isotopic to each other if and only if $p=p'$ and $q'\equiv q^{\pm 1}\ (\text{mod} \ p )$.

\begin{figure}[ht!]
\begin{center}
\includegraphics[scale=0.38]{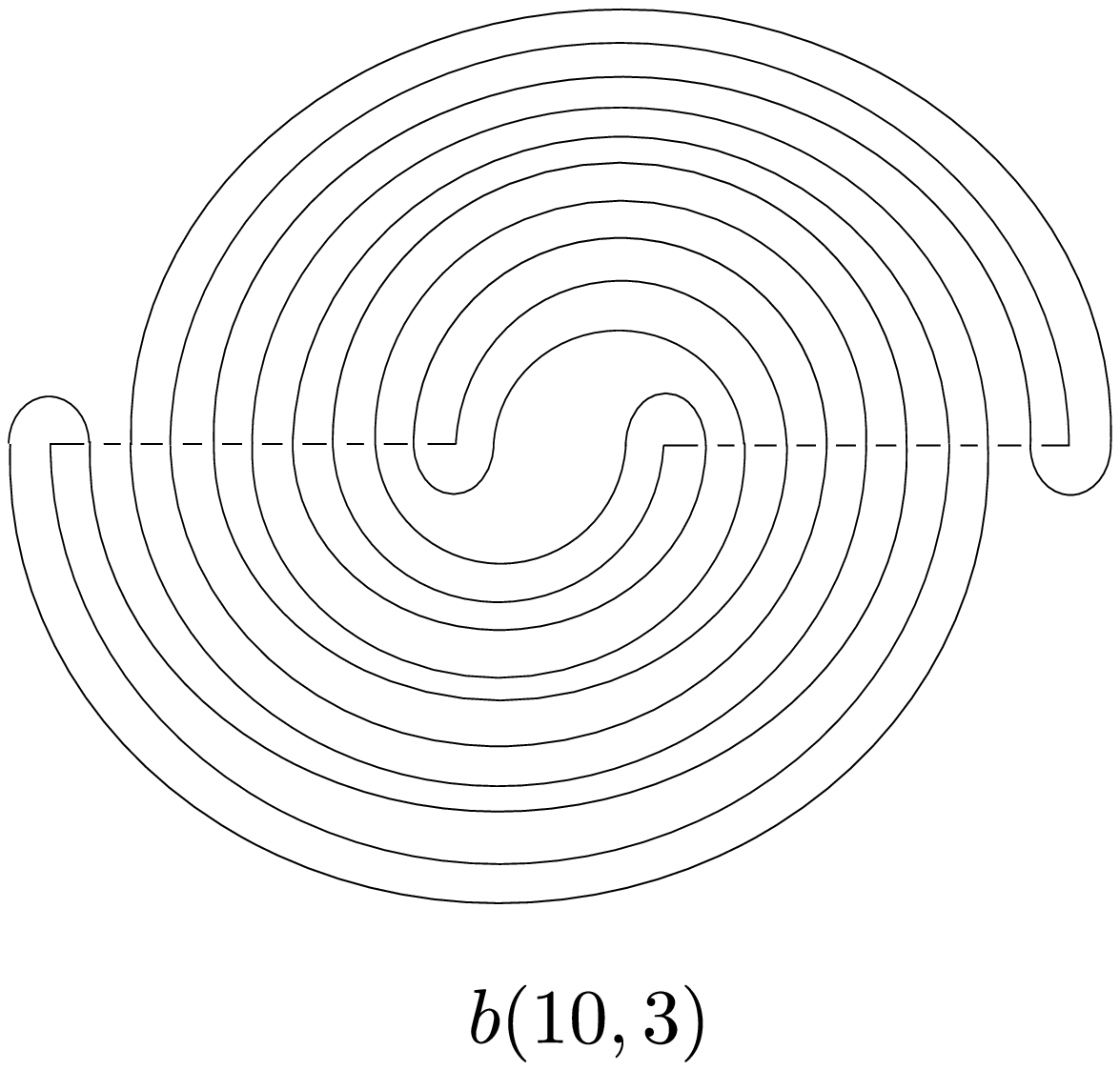}
\caption{The 2-bridge link $b(10,3)$.}\label{figure, 2 bridge links}
\end{center}
\end{figure}

\begin{thm}\label{Theorem, (1,1)-unknot pattern diagram gives two bridge pattern}
Let $P$ be a $(1,1)$-unknot pattern obtained from a genus-one doubly-pointed bordered diagram of parameter $(r,s)$. Then the link consists of $P$ and the meridian of the solid torus is the 2-bridge link $b(2|s|+4|r|, \epsilon(r)(2|r|-1))$. Here $\epsilon(r)$ is the sign function of $r$.  
\end{thm}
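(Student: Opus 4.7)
The plan is to realize the pattern $P$ together with the meridian $\mu$ as a two-component link $L = P \cup \mu \subset S^3$ via the standard embedding $S^1 \times D^2 \hookrightarrow S^3$, and to identify $L$ with the 2-bridge link $b(2|s|+4|r|,\, \epsilon(r)(2|r|-1))$.

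First I would reconstruct $L$ explicitly. By Lemma \ref{lemma, unknot pattern diagram feature} together with the fact that $\beta$ bounds the attaching disk of the 2-handle, the curve $\beta$ is isotopic to $\mu$ on $T^2$, so $P$ is isotopic to $\beta$ pushed slightly into the interior of $S^1 \times D^2$. This produces a concrete diagram of $L$ in which the geometric intersection number $|\beta \cap \mu|$ (read directly off the fundamental region) equals $2|s|+4|r|$, matching the proposed value of $p$.

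Next I would identify $L$ as a 2-bridge link and determine its Schubert parameters. A clean route is to invoke Ording's classification of $(1,1)$-patterns in the solid torus (acknowledged in the paper), which establishes a bijection between such patterns and 2-bridge patterns via their Schubert parameters. Under this bijection, the $(r,s)$-parametrization from Theorem \ref{classification of unknot pattern Heegaard diagrams} must correspond to the $(p,q)$-parametrization of 2-bridge links, and the parameters can be read off from the combinatorics of the $(r,s)$-diagram. Concretely, the proof of Lemma \ref{lemma, r,s that gives a single curve} already provides the key data: traversing $\beta$ once induces a shift of intersection points by $2r-1 \pmod{2(s+2r)}$, which is precisely the Schubert rotation number. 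The sign $\epsilon(r)$ arises because reversing the direction of looping produces the mirror link $b(p,-q)$, while horizontal reflection of the diagram (exchanging (R)$\leftrightarrow$(L) and (MR)$\leftrightarrow$(ML)) is an ambient isotopy of $L$, accounting for the appearance of $|s|$ rather than $s$.

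The main obstacle is the rigorous identification of $L$ with the 2-bridge link. If one prefers a self-contained proof in place of Ording's result, the alternative is to realize the $(r,s)$-diagram as an explicit rational tangle and compute its fraction $p/q$ by continued fraction expansion — a routine but lengthy calculation that amounts to tracking the arcs of $\beta$ between consecutive intersections with $\mu$ in the universal cover. Verification of the Schubert normal form conditions is then direct: one has $\gcd(2|s|+4|r|,\, 2|r|-1) = \gcd(|s|+1,\, 2|r|-1) = 1$ (using that $2|r|-1$ is odd and the hypothesis of Theorem \ref{classification of unknot pattern Heegaard diagrams}), and the inequality $|q| = 2|r|-1 < |s| + 2|r| = p/2$ holds for any $|r| \geq 1$.
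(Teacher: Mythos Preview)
There is a genuine error in your reconstruction of $P$. The curve $\beta$ is the attaching circle of the $2$-handle, hence isotopic to the meridian $\mu$; it is \emph{not} the pattern knot. The pattern $P$ is recovered from the doubly-pointed diagram in the standard $(1,1)$ way: it is the union of an arc from $w$ to $z$ in the complement of $\beta$ (pushed over the $2$-handle) with an arc from $z$ to $w$ in the complement of $\alpha^a$. So your sentence ``$P$ is isotopic to $\beta$ pushed slightly into the interior'' is simply false, and the geometric intersection count $|\beta\cap\mu|$ you read off is not an invariant of $L=P\cup\mu$. Consequently the rotation-number argument you extract from Lemma~\ref{lemma, r,s that gives a single curve} is being applied to the wrong curve.

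The paper proceeds exactly along the lines you outline once this is fixed: drawing $P$ in the fundamental region via the two-arc construction above, one finds that $P$ itself has two bundles of $|r|-1$ loops and a middle stripe of $|s|+1$ strands (note the shift relative to $\beta$'s parameters $(|r|,|s|)$). The identification of $P\cup\mu$ with $b(2|s|+4|r|,\,\epsilon(r)(2|r|-1))$ is then obtained by citing Lemma~4.1 of \cite{MR2708610} --- this is precisely the Ording result alluded to in the acknowledgments, so your plan to invoke it is right, just with the corrected diagram for $P$. Your verification of the Schubert constraints ($\gcd(p,q)=1$ via $\gcd(|s|+1,2|r|-1)=1$, and $|q|<p/2$) is correct and worth keeping.
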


\begin{proof}
The pattern knot $P$ can be drawn on the torus by an arc connecting $w$ to $z$ in the complement of the $\beta$-curve, and then an arc in the complement of $\alpha^a$. Viewing in the fundamental region, $P$ has a diagram consisting of two bundles of $|r|-1$ loops and a stripe of $|s|+1$ arcs (Figure \ref{figure, pattern from bordered diagram}). Such a $P$ together with the meridian of the solid torus is the 2-bridge link $b(2|s|+4|r|, \epsilon(r)(2|r|-1))$ according to Lemma 4.1 of \cite{MR2708610}.
\end{proof}

\begin{figure}[htb!]
\begin{center}
\includegraphics[scale=0.3]{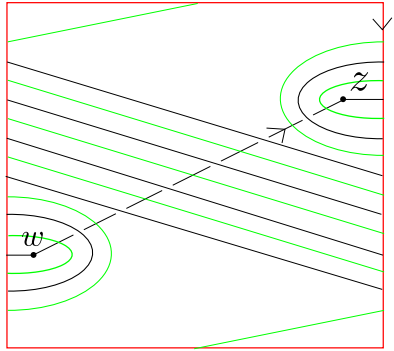}
\caption{The pattern knot. Note such a pattern is determined by the number of loops and strands in the middle stripe of the diagram.}\label{figure, pattern from bordered diagram}
\end{center}
\end{figure}

\section{$\tau$-invariant and two-bridge patterns}
Recall from the previous section that a two-bridge link $b(p,q)$ gives rise to a $(1,1)$-unknot pattern $P$. In this section, we derive a formula for $\tau(P(T_{2,3}))$ and $\tau(P(-T_{2,3}))$ in terms of $p$ and $q$. The formula involves two functions; one is a quantity $\sigma$ associated to some Heegaard diagram for such a pattern, and the other is the winding number $w(P)$. We explain these quantities below.

First a remark on the convention, from the previous section we know bordered diagrams for such patterns are parametrized by a pair of integers, and in this section, we will restrict attentions to diagrams of type (L), given by pairs $(r,-s)$ with $r,s\in \mathbb{Z}$, $r>1$, and $\gcd(2r-1,s+1)=1$. Note generality is not lost by this restriction as we may pass to mirror image of $P(K)$ when $P$ is not of type (L).

We explain how we associate a quantity $\sigma$ to a genus-one Heegaard diagram corresponding to an unknot pattern. 
\begin{defn}
Given a type (L) diagram $H=H(r,-s)$, $\sigma(H)$ is defined to be the algebraic intersection number of the arc on $\beta$ outside of the main region and a left push-off of meridian $\mu$ (still denoted by $\mu$). Here we orient $\beta$ and $\mu$ compatibly so that if we isotope $\mu$ to $\beta$ (disregarding $w$ and $z$), then the orientation of $\beta$ is induced from that of $\mu$ (See Figure \ref{figure, definition of sigma}). 
\end{defn} 

\begin{figure}[htb!]
\begin{center}
\includegraphics[scale=0.5]{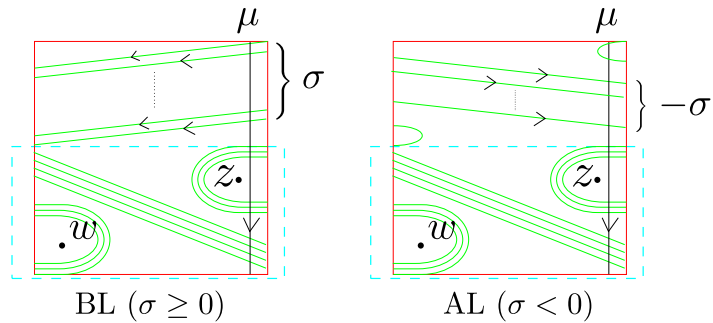}
\caption{Seeing $\sigma(H)$ diagramatically. Note the boxed region is the main region.}\label{figure, definition of sigma}
\end{center}
\end{figure}

For convenience, we call the arc out of the main region the \emph{dependent arc}, as it is determined by the number of caps $r$ around the $w$- and $z$-base points and the number of stripes $s$ in the main region. We also remark that it is necessary to use a push-off in the above definition so that the end points of the dependent arc is not on the push-off meridian, and actually a more precise definition can be given using homology class in some relative homology group, but here we would rather stick with this more straightforward and pictorial explanation.  

Note depending on the parity and sign of $\sigma$, one can draw the 5-tuple diagram corresponding to $H$ in a symmetric way as shown in Figure \ref{figure, symmetric diagram}.

\begin{figure}[htb!]
\begin{center}
\includegraphics[scale=0.55]{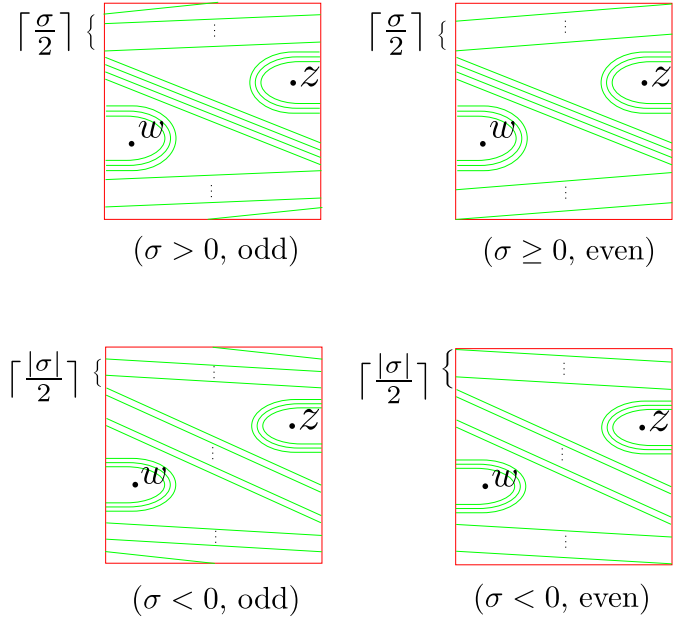}
\caption{Symmetric diagrams for $H(r,-s)$ in the fundamental region.}\label{figure, symmetric diagram}
\end{center}
\end{figure}

Next, we give a closed formula for $\sigma$.
\begin{prop}\label{Proposition, close formula for sigma}
$$\sigma(H(r,-s))=\sigma(s+1,2r-1)=\sum_{i=1}^{2r-2}(-1)^{\lfloor\frac{i(2r-1)}{s+1} \rfloor}$$
\end{prop}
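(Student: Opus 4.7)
The plan is to compute $\sigma(H(r,-s))$ directly as a sum of algebraic intersection contributions from the pieces of the dependent arc, and then to match this count with the combinatorial sum on the right-hand side.

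The first step is to enumerate the intersection points of $\beta$ with $\mu$ inside the main region and determine the order in which $\beta$ visits them. By the argument in the proof of Lemma~\ref{lemma, r,s that gives a single curve}, consecutive points $a_i$ satisfy the recurrence $\epsilon_{i+1}a_{i+1} - \epsilon_i a_i \equiv 2r-1 \pmod{2(s+2r)}$. Since $\gcd(2r-1, s+1) = 1$, the orbit of this rotation hits every intersection point exactly once, yielding an explicit traversal of $\beta \cap \mu$.

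The second step is to read off the geometry of the dependent arc from the symmetric diagram of Figure~\ref{figure, symmetric diagram}. The dependent arc is forced, given the main-region data, by requiring $\beta$ to close up as a simple curve with $\beta \cdot \mu = 0$ (Lemma~\ref{lemma, unknot pattern diagram feature}). The visiting order from Step~1 determines which exit/entry points on the boundary of the main region are connected by each component of the dependent arc and in which direction each such arc wraps across the torus.

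The third step is to compute the algebraic intersection with a left push-off $\mu'$ of $\mu$ by adding up the $\pm 1$ contributions from each component of the dependent arc. The crux is to show that, with the components consistently indexed, the $i$-th component contributes $(-1)^{\lfloor i(2r-1)/(s+1) \rfloor}$. This is a Beatty sequence / Three-Distance Theorem statement: the orbit of the rotation by $\tfrac{2r-1}{s+1}$ on $\mathbb{R}/\mathbb{Z}$ passes alternately on the two sides of a designated cut corresponding to $\mu'$, and the parity of the side is governed precisely by the parity of $\lfloor i(2r-1)/(s+1) \rfloor$.

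The main obstacle I anticipate is the careful bookkeeping of signs in Step~3 — in particular, keeping track of the orientation of each dependent-arc component and the side of $\mu'$ on which it exits the main region. A cleaner alternative would be to set up an induction on the pair $(s+1, 2r-1)$ via the Euclidean algorithm: show that both $\sigma(H(r,-s))$ and the combinatorial sum $\sum_i (-1)^{\lfloor i(2r-1)/(s+1)\rfloor}$ satisfy the same reduction formula under the step $(s+1, 2r-1) \mapsto (2r-1, (s+1) \bmod (2r-1))$, and verify the base case when the reduction terminates. Such a reciprocity would follow geometrically from a standard handleslide/twist simplification of the main region, reducing $(r,s)$ to a smaller pair while preserving the relevant intersection data.
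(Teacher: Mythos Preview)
Your plan attacks the dependent arc directly, but this is the harder side of the problem. The dependent arc's combinatorics is only implicitly determined by the closing-up condition, so your Step~3 claim---that the $i$-th component contributes $(-1)^{\lfloor i(2r-1)/(s+1)\rfloor}$---is essentially a restatement of the proposition rather than a step toward it. You have not explained why there should be exactly $2r-2$ relevant contributions, nor why the denominator in the floor is $s+1$ rather than the total count $2r+s$ that your Step~1 traversal naturally produces. The appeal to Beatty sequences and the Three-Distance Theorem is suggestive but does not supply the missing link; those tools describe gap patterns of a rotation orbit, not immediately the sign pattern you need.

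The paper's route avoids this. Since $\beta\cdot\mu=0$, one has $\sigma(H)=-\,(l\cdot\mu)$ where $l$ is the arc \emph{inside} the main region. The main-region arc is the one whose structure is explicit: it has $2r+s$ intersections with $\mu$, consecutive ones differ by $2r-1$ modulo $2(2r+s)$, and the sign flips exactly when a cap is traversed. This gives directly
\[
l\cdot\mu \;=\; 1-\sum_{i=1}^{2r+s-1}(-1)^{\lfloor i(2r-1)/(2r+s)\rfloor},
\]
so $\sigma=\sum_{i=1}^{2r+s-1}(-1)^{\lfloor i(2r-1)/(2r+s)\rfloor}-1$. The passage from this sum (denominator $2r+s$, length $2r+s-1$) to the claimed one (denominator $s+1$, length $2r-2$) is a separate arithmetic identity, which the paper imports from Lemma~3.9 of \cite{chen2017alexander}. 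Your inductive/Euclidean alternative is in fact the natural way to prove that identity, but it belongs to the sum-simplification step, not to the geometric intersection count. If you reorganize your argument to compute $l\cdot\mu$ first and then run the Euclidean reduction on the resulting sum, you will recover the paper's proof.
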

\begin{proof}
Note since $\beta \cdot \mu=0$, we may equivalently study the arc on $\beta$ inside the main region, which we denote by $l$. Note $l\cdot\mu$ is can be described by a simple formula:
\begin{equation}\label{equation, signature}
l\cdot \mu=1-\sum_{i=1}^{2r+s-1}(-1)^{\lfloor\frac{i(2r-1)}{2r+s} \rfloor}
\end{equation}
We explain where the terms in the above formula come from. First note there are a total $2r+s$ many intersection points between $l$ and $\mu$, which we label from $0$ to $2r+s-1$ starting from top to bottom. Now as we traverse along $l$ downwards, the first intersection point is positive, and hence contributes 1 to the right hand side of Equation \ref{equation, signature}. As we move on, the next intersection is negative. Note consecutive intersection points on $l$ will differ by $2r-1$ units modulo $2r+s$ and the further intersections change sign only when we need to make a turn along some caps, which is captured by the floor function. 

Now $\beta\cdot \mu=0$ implies $\sigma(s+1,2r-1)+1-\sum_{i=1}^{2r+s-1}(-1)^{\lfloor\frac{i(2r-1)}{2r+s}\rfloor}=0$. Therefore, 
\begin{align*}
\sigma(s+1,2r-1)&=\sum_{i=1}^{2r+s-1}(-1)^{\lfloor\frac{i(2r-1)}{2r+s}\rfloor}-1\\
&=\sum_{i=1}^{2r-2}(-1)^{\lfloor \frac{i(2r-1)}{s+1} \rfloor},
\end{align*}
where the last equality is proved in Lemma 3.9 of \cite{chen2017alexander}.
\end{proof}

We move to consider the winding number $w(P)$. In order to remove ambiguity of sign when speaking of the winding number, firstly we fix the convention on the orientations of the relevant objects: we orient the meridian $\mu$ of the solid torus so that in the 5-tuple diagram, $\mu$ is oriented downwards, and the pattern knot $P$ is oriented so that the short arc connecting $w$ to $z$ in the complement of $\alpha^a$ is oriented from $w$ to $z$ (Figure \ref{figure, pattern from bordered diagram}).  By pushing $P$ in the interior of $S^1\times D^2$, we set $w(P)=lk(P,\mu)$.

\begin{lem}\label{Lemma, winding number equals intersection of beta and short arc}
Equip $\beta$ with the orientation obtained by isotoping $\mu$ to $\beta$. Recall $\delta_{w,z}$ denotes the straight arc connecting $w$ to $z$ within the fundamental domain of the 5-tuple. We have $w(P)=\beta\cdot \delta_{w,z}$
\end{lem}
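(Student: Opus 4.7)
My plan is to realize $P$ as a $1$-cycle on the torus $T^2 = \partial(S^1\times D^2)$ and deduce the equality from a short homological intersection computation.

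From the bordered Heegaard diagram for $(S^1\times D^2, P)$, the pattern $P$ is presented on $T^2$ as the union of two arcs: $\gamma_\alpha$ runs from $w$ to $z$ in the complement of $\alpha^a = \mu\cup\lambda$, and $\gamma_\beta$ runs from $z$ to $w$ in the complement of $\beta$. Since the open fundamental domain $T^2\setminus(\mu\cup\lambda)$ is a disk (with $w$ on its boundary and $z$ in its interior), any arc from $w$ to $z$ in it is unique up to isotopy rel endpoints; hence $\gamma_\alpha$ is isotopic to $\delta_{w,z}$, and so, as $1$-cycles on $T^2$, $P$ is homologous to $\delta_{w,z}+\gamma_\beta$.

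The winding number $w(P)=lk(P,\mu)$ equals the algebraic intersection $[P]\cdot[\mu]$ in $H_1(T^2)$ up to a fixed sign determined by the orientation of $T^2=\partial(S^1\times D^2)$. By Lemma~\ref{lemma, unknot pattern diagram feature} we have $\beta\cdot\mu=0$ and $\beta\cdot\lambda=\pm 1$, whence $[\beta]=\pm[\mu]$ in $H_1(T^2)$; the orientation of $\beta$ stipulated in the statement (obtained by isotoping $\mu$ to $\beta$) picks out $[\beta]=[\mu]$. Substituting and using bilinearity of the intersection pairing,
\[
w(P) \;=\; \pm\,[P]\cdot[\mu] \;=\; \pm\,[P]\cdot[\beta] \;=\; \pm\bigl(\delta_{w,z}\cdot\beta + \gamma_\beta\cdot\beta\bigr).
\]
The term $\gamma_\beta\cdot\beta$ vanishes because $\gamma_\beta$ lies in the complement of $\beta$, and antisymmetry of the intersection form on the oriented surface $T^2$ converts $\delta_{w,z}\cdot\beta$ into $-\beta\cdot\delta_{w,z}$.

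The remaining task is sign bookkeeping: the sign from $w(P)=\pm[P]\cdot[\mu]$ and the sign from antisymmetry must combine to produce the $+$ in the asserted formula. This is a routine check against the conventions fixed at the start of Section~6 (meridian $\mu$ oriented downward in the $5$-tuple picture, $P$ oriented so that $\gamma_\alpha$ runs from $w$ to $z$, and $T^2$ carrying the boundary orientation from $S^1\times D^2$), which together determine every sign unambiguously. This bookkeeping is the only real obstacle; the homological content of the argument is essentially a one-line computation.
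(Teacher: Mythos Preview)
Your proof is correct and follows essentially the same route as the paper's: both use that $[\beta]=[\mu]$ and that $P$ decomposes on the torus as $\delta_{w,z}$ together with an arc $\gamma_\beta$ missing $\beta$. The paper packages the computation as $lk(P,\beta)$ counted against the meridian disk bounded by $\beta$ in $S^1\times D^2$, which absorbs the sign bookkeeping you defer, but the content is the same.
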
 
\begin{proof}
Since $\beta$ is isopotic to $\mu$, $lk(P,\beta)=lk(P,\mu)$. Figure \ref{figure, pattern from bordered diagram} shows we have a diagram of $P$ and $\beta$ such that the only intersections between $P$ and the meridian disk bounded by $\beta$ occur on the arc obtained by pushing $\delta_{w,z}$ into the solid torus. Therefore, $\beta\cdot \delta_{w,z}=lk(\beta,P)=w(P)$.
\end{proof}

\begin{figure}[htb!]
\begin{center}
\includegraphics[scale=0.4]{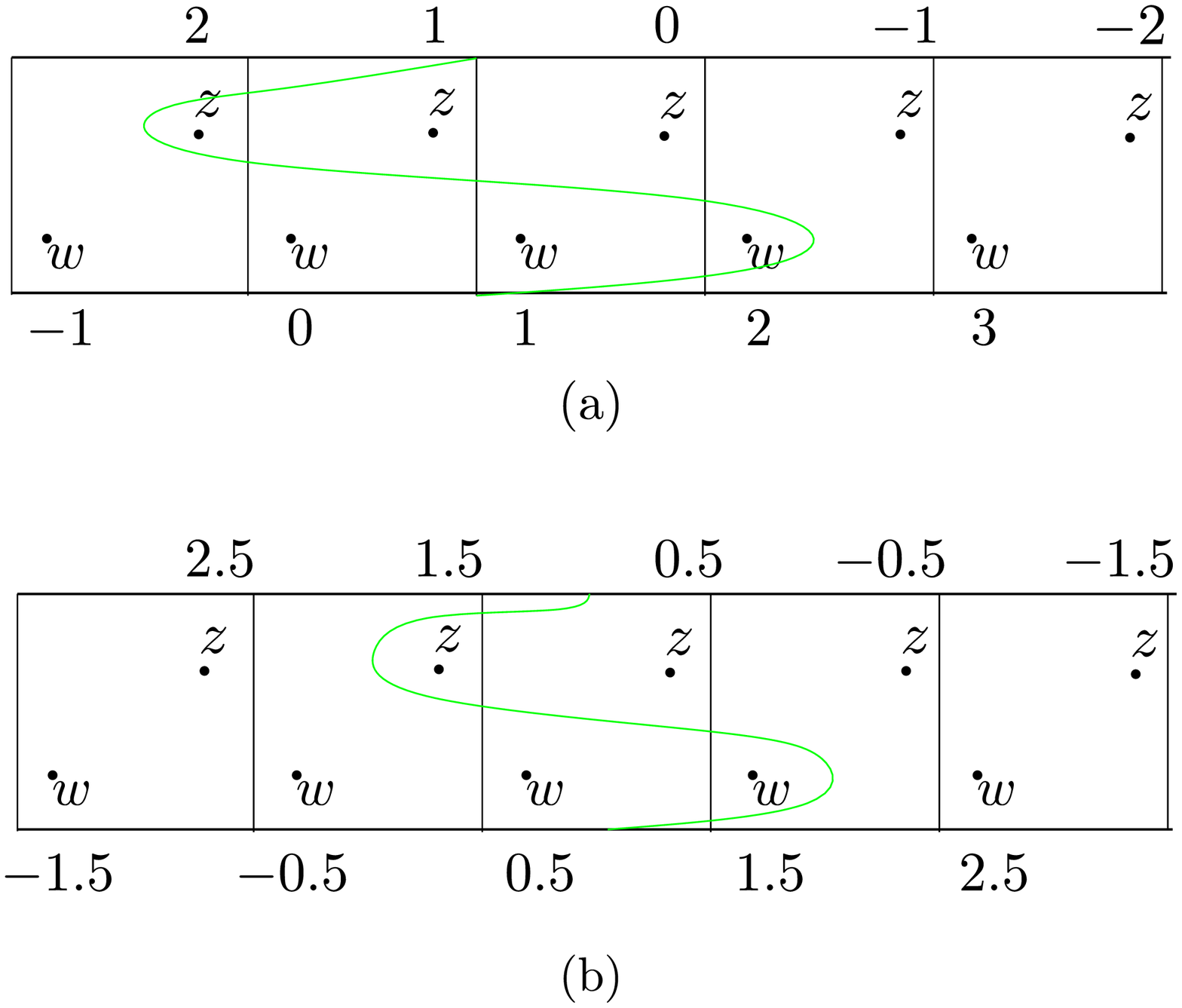}
\caption{(a) shows the case in which $\sigma$ is even, and in the this example $O_w(H)=O_z(H)=2$; (b) shows the case in which $\sigma$ is odd, and in the this example $O_w(H)=O_z(H)=1.5$.   }\label{figure, O_w and O_z}
\end{center}
\end{figure}

We shall need another diagrammatic description of the winding number. Start with a symmetric diagram for $H(r,-s)$, lift the $\beta$ curve to the covering space $S^1\times \mathbb{R}$ of $T^2$ corresponding to the subgroup of $\pi_1(T^2)$ generated by the longitude $\lambda$. Label the lifts of the base point $w$ as in Figure \ref{figure, O_w and O_z}. The rule is: if $\sigma(H)$ is even, then the first $w$ base point to the right of $\beta\cap \lambda$ is labeled by $1$, if $\sigma$ is odd, we label the corresponding point by $1.5$, and in both cases, the labels increase by $1$ as we move from one base point to the next from left to right. Note the lift of the $\beta$ curve separates $S^1\times \mathbb{R}$ into two regions,which we call by the left region and right region. We define $O_w(H)$ to equal to the label of the right-most $w$ base point contained in the left region. One can similarly define $O_z(H)$. See Figure \ref{figure, O_w and O_z} for an example.

\begin{prop}\label{proposition, winding equal to 2 O_w }
Let $P$ be a pattern obtained from the bordered Heegaard diagram $H(r,-s)$, then $w(P)=O_w(H)+O_z(H)=2O_w(H)$.
\end{prop}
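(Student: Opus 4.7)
The strategy is to invoke Lemma~\ref{Lemma, winding number equals intersection of beta and short arc} to rewrite $w(P)=\beta\cdot\delta_{w,z}$, and then compute this intersection number by lifting everything to the cover $\tilde X=S^1\times\mathbb{R}$ used to define $O_w(H)$ and $O_z(H)$. Let $p\colon\tilde X\to T^2$ denote the covering projection. Because $\delta_{w,z}$ is a short arc near the corner of the fundamental domain, its preimage $p^{-1}(\delta_{w,z})$ is a disjoint union of short embedded arcs $\{\tilde\delta_n\}_{n\in\mathbb{Z}}$, where $\tilde\delta_n$ runs from a lift $w_n$ of $w$ to the matching lift $z_n$ of $z$ (with the same deck index, since the arc is too short to cross any deck translate). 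The chosen lift $\tilde\beta$ contains exactly one preimage of each point of $\beta$, so the signed intersection $\beta\cdot\delta_{w,z}$ equals $\sum_{n\in\mathbb{Z}}\tilde\beta\cdot\tilde\delta_n$. Each $\tilde\delta_n$, being short, meets $\tilde\beta$ in at most one transverse point, and does so precisely when $w_n$ and $z_n$ lie on opposite sides of $\tilde\beta$.

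The next step is to identify this count with $O_w(H)+O_z(H)$. By construction of the labels, the $w$-lifts in the left region of $\tilde\beta$ are exactly those with labels at most $O_w(H)$, and the $z$-lifts in the left region are exactly those with labels at most $O_z(H)$. Inspecting the symmetric diagram in Figure~\ref{figure, symmetric diagram} and using the chosen orientations of $\beta$ (inherited from $\mu$) and $\delta_{w,z}$ (from $w$ to $z$) to track the sign of each crossing, one enumerates the straddling $n$'s directly and checks that their signed count is $O_w(H)+O_z(H)$. The enumeration splits into two cases according to the parity of $\sigma(H)$: even $\sigma$ corresponds to the $w$- and $z$-rows of lifts being horizontally offset by an integer (hence integer labels), and odd $\sigma$ corresponds to a half-integer offset (hence half-integer labels).

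Finally, as visible from the examples in Figure~\ref{figure, O_w and O_z}, the symmetric diagram admits a $180^\circ$ rotational symmetry that exchanges $w$ and $z$ and preserves $\tilde\beta$ setwise, which immediately yields $O_w(H)=O_z(H)$ and hence the second equality $w(P)=2O_w(H)$. The main obstacle I foresee is the middle step: the precise matching between the straddle count and $O_w(H)+O_z(H)$ requires careful label bookkeeping in each of the two parity cases, together with a sign check for each crossing of $\tilde\delta_n$ with $\tilde\beta$ to confirm that all contributions add up with the correct sign rather than cancelling.
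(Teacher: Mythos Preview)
Your approach is essentially the paper's: invoke Lemma~\ref{Lemma, winding number equals intersection of beta and short arc}, lift to the cylinder, decompose $\tilde\beta\cdot\pi^{-1}(\delta_{w,z})$ as a sum over the lifted arcs $\tilde\delta_n$, and use the rotational symmetry for $O_w(H)=O_z(H)$.

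The one place you make your life harder than necessary is the middle step. You assert that each $\tilde\delta_n$, being short, meets $\tilde\beta$ in \emph{at most one} transverse point, and then plan a parity-of-$\sigma$ case split with a sign check to match the count to $O_w(H)+O_z(H)$. The geometric claim is not obvious (the arc $\delta_{w,z}$ passes through the middle stripe of $\beta$, which may have many strands, and you would need to argue that no two of these strands lift to crossings of the same $\tilde\delta_n$), and in any case it is not needed. The paper instead observes that $\tilde\beta$ is a closed curve winding once around the cylinder and therefore \emph{separates} $S^1\times\mathbb{R}$ into a left region and a right region; consequently the \emph{algebraic} intersection $\tilde\beta\cdot\tilde\delta_n$ is automatically $0$, $+1$, or $-1$, determined solely by which regions contain the two endpoints $w_n$ and $z_n$ (it is $+1$ when the $w$-end lies left and the $z$-end lies right, $-1$ in the opposite case, and $0$ otherwise). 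This trichotomy holds regardless of the parity of $\sigma$ and already carries the correct sign, so the case analysis and sign verification you flag as the ``main obstacle'' disappear: summing over $n$ yields $O_w(H)+O_z(H)$ directly.
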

\begin{proof}
It is clear $O_w(H)=O_z(H)$ in view of the symmetry of the diagram. Hence it is left to show $w(P)=O_w(H)+O_z(H)$. By Lemma \ref{Lemma, winding number equals intersection of beta and short arc}, $w(P)=\beta\cdot \delta_{w,z}$, and we may equivalently consider this intersection in $S^1\times \mathbb{R}$: let $\tilde{\beta}$ be a single lift of $\beta$, and $\pi^{-1}(\delta_{w,z})$ be the preimage of $\delta_{w,z}$ of the covering map $\pi$. Then $\beta\cdot \delta_{w,z}=\tilde{\beta}\cdot\pi^{-1}(\delta_{w,z})$. The proposition then follows from the following observation: Let $a_{w,z}$ be a connected component of $\pi^{-1}(\delta_{w,z})$, then $\tilde{\beta}\cdot a_{w,z}=0$ if both end points of $a_{w,z}$ are contained in the left region or the right region. $\tilde{\beta}\cdot a_{w,z}=1$ if the $w$ end point of $a_{w,z}$ is in the left region, while the $z$ end point is on the right. Otherwise, $\tilde{\beta}\cdot a_{w,z}=-1$.
\end{proof}
\begin{rmk}
We remark there is also a closed formula for the winding number in interest of computation.For the pattern corresponding to the two-bridge link $b(p,q)$ where $q>0$, the winding number is equal to 
\begin{equation}\label{Equation, closed formula for winding number}
w(p,q)=\sum_{k=0}^{\lfloor \frac{p-2}{2} \rfloor}(-1)^{\lfloor \frac{(2k+1)q)}{p} \rfloor}
\end{equation}
We skip the proof for this formula and remark that it is similar to the proof of Proposition \ref{Proposition, close formula for sigma}.
\end{rmk}

For the proof of Theorem \ref{Theorem, tau invariant formula}, we will partially carry out the algorithm in Section \ref{section,algorithm for computing tau}: Do isotopies that cancel pairs of intersection points whose Alexander grading difference is one, after that we can read off the $\tau$-invariant from the pairing diagram. More concretely, we will push the caps around the $z$-base point off one by one, until this cannot be done any more (See Example \ref{example, tau of Mazur of Trefoil}). 

\begin{figure}[htb!]
\begin{center}
\includegraphics[scale=0.45]{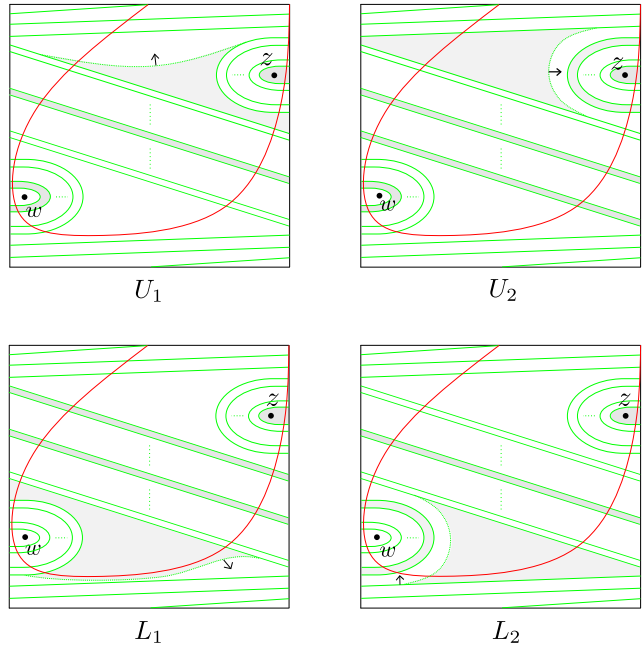}
\caption{Pushing $\beta$ along the grey Whitney disks can possibly end up with four situations.}\label{figure, four possible endings of a z-cap removal}
\end{center}
\end{figure}

\begin{proof}[Proof of Theorem \ref{Theorem, tau invariant formula}]
For the ease of exposition, we give details in the case when the $\sigma(p,q)$ is odd, $|\sigma(p,q)|\geq 3$ and the companion knot is $T_{2,3}$. We remark that similar reasoning would work in other cases. 

We begin with the case when $\sigma$ is odd and $\sigma\geq 3$. Note when the companion knot is the trefoil knot, in the universal cover of the pairing diagram, only two rows contains the intersection points. We refer to them as the upper row and the lower row respectively. 

First, we examine the isotopy of pushing the z-caps off in the lower row. Push the innermost z-cap off and cancel intersection pairs as many as possible, then the $\beta$ curve could possibly end with one of the four cases as shown in Figure \ref{figure, four possible endings of a z-cap removal}, which we call $U_1$, $U_2$, $L_1$, and $L_2$. Note $U_1$ increase the algebraic intersection number of the dependent part of $\beta$ with $\mu$, while $U_2$ decreases. Same observation  
applies to $L_1$ and $L_2$.

Note the A-buoys occur in $U_1$, $L_1$, $L_2$ are out of the main region (recall this is the region containing the caps and the stripes in the middle), and hence will not get involved in the next round of the z-cap removal. The A-buoy which comes from $U_2$ will neither get involved if further z-cap removals end with $U_1$, $L_1$, or $L_2$. The only difference is if another $U_2$ happens, then it creates a Whitney disk which connects a pair of points whose Alexander grading difference is two (see Figure \ref{figure, filtration 2 cap from twice U_2}).

\begin{figure}[htb!]
\begin{center}
\includegraphics[scale=0.5]{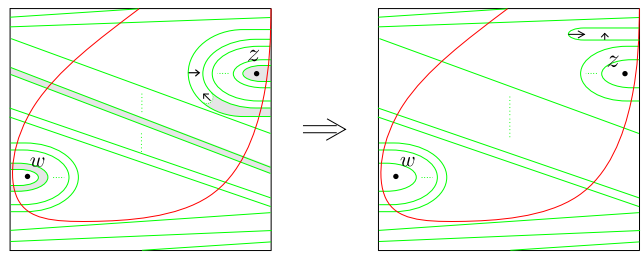}
\caption{Two endings with $U_2$ creates a Whitney disk having filtration difference equal to two.}\label{figure, filtration 2 cap from twice U_2}
\end{center}
\end{figure}

\begin{figure}[htb!]
\begin{center}
\includegraphics[scale=0.6]{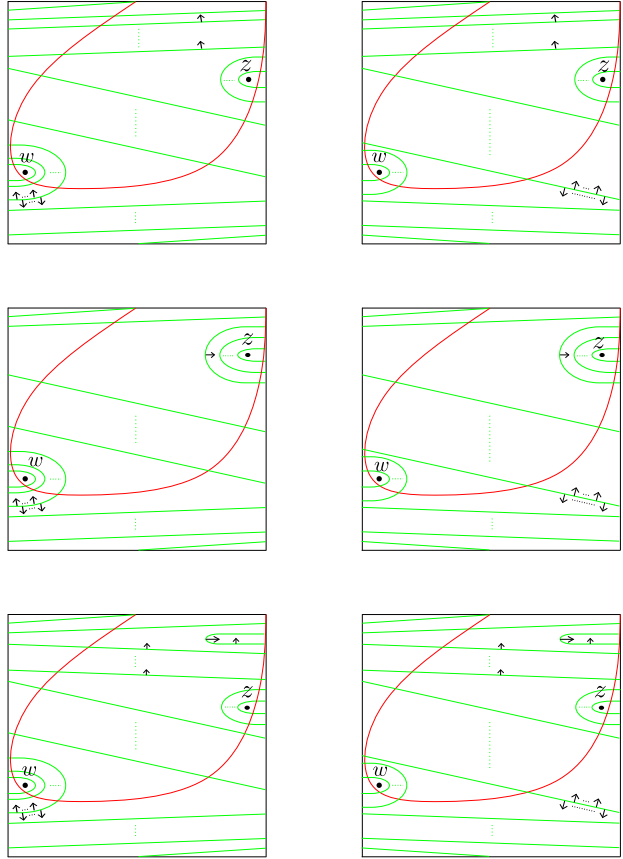}
\caption{Six types of possible situations occur in the process of removing z-caps}\label{figure, six possibilities during cap removal}
\end{center}
\end{figure}

Note in the process of repeatedly carrying out the z-cap removals, the effect of $L_1$ and $L_2$ cancel each other: one increases the algebraic intersection number between the dependent part of $\beta$ and $\mu$ while the other decreases, and the A-buoys would come in different directions and hence offset each other. The same observation apply to $U_1$ and $U_2$.

With these observations at hand, note during the process of the doing the z-cap removals we will be seeing one of the following six situations in Figure \ref{figure, six possibilities during cap removal}; one can check if a further z-cap removal is done to one of the six diagrams, the resulting diagram is still one of them. The process of removing z-caps will end with one of the four types of diagrams as shown in Figure \ref{figure, bottom four endings}.

\begin{figure}[htb!]
\begin{center}
\includegraphics[scale=0.6]{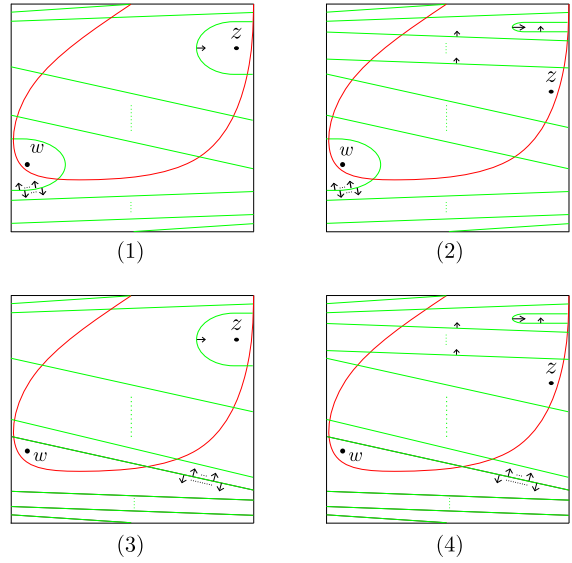}
\caption{Possible endings in the lower row when $\sigma\geq 3$.}\label{figure, bottom four endings}
\end{center}
\end{figure}

The same reasoning can be applied to the analyze the isotopy in the upper row. At the end of this process, the diagram would look like one of the four cases shown in Figure \ref{figure, top four endings}.

\begin{figure}[htb!]
\begin{center}
\includegraphics[scale=0.5]{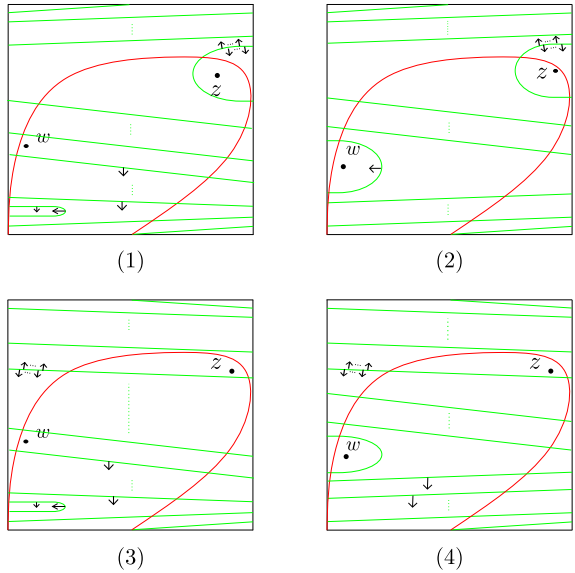}
\caption{Possible endings in the upper row when $\sigma\geq 3$.}\label{figure, top four endings}
\end{center}
\end{figure}
We move to combine the diagrams from Figure \ref{figure, bottom four endings} and Figure \ref{figure, top four endings}. Note if we allow isotopies of the $\beta$-curve that eliminates Whitney disks connecting intersection pairs with Alxander filtration greater than or equal to two, then the ending diagram in the upper row should be isotopic to the one in the lower row. With this understood, one can see there are three type of possibilities of how the simplified pairing diagram looks like Figure \ref{figure, sigma positive top 12+bottom 12}, Figure \ref{figure, sigma positive top 12+bottom 34}, and Figure \ref{figure, sigma positive top 34+bottom 12}. In these figures, the dotted caps stand for those having two A-buoys on the tip. From Figure \ref{figure, six possibilities during cap removal}, one can see such caps would appear in the lower row to ensure that the first turn as we traverse along the $\beta$ curve downwards would appear after it crosses $\mu$ from right to left at least $\lfloor\frac{\sigma}{2} \rfloor$ many times. Similar observation applies to the appearance of such caps in the upper row. 

We move to determine which intersection point has the Alexander grading that equals to the $\tau$-invariant case by case. 

\begin{figure}[htb!]
\begin{center}
\includegraphics[scale=0.45]{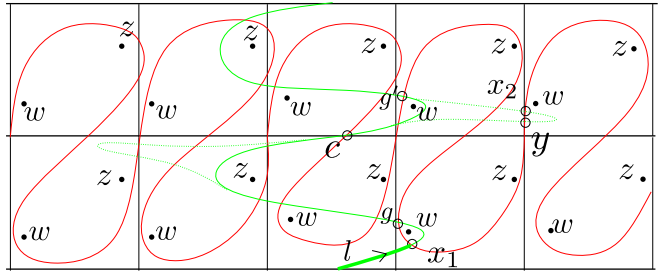}
\caption{Combining Figure \ref{figure, top four endings} (1) or (2) and Figure \ref{figure, bottom four endings} (1) or (2)}\label{figure, sigma positive top 12+bottom 12}
\end{center}
\end{figure}
The case given in Figure \ref{figure, sigma positive top 12+bottom 12} comes from combining Figure \ref{figure, top four endings} (1) or (2) and Figure \ref{figure, bottom four endings} (1) or (2). If the dotted cap does not appear, then $\tau=A(x_1)$, as $x_1$ is an intersection point with neither incoming nor outgoing differentials. If the dotted cap appears, the relevant component of the chain complex consists of three intersection points: $x_1$, $x_2$, and $y$. The differentials are $x_1\rightarrow y$, $x_2 \rightarrow y$. Therefore, $\tau=\max(A(x_1),A(x_2)).$ In the latter case, we claim $A(x_1)\geq A(x_2)$. To see this, note $A(g)-A(x_1)=-1$, $A(g')-A(g)=\beta\cdot\delta_{w,z}=-w(P)$, and $A(x_2)-A(g')\leq 1$ in view of the A-buoys in Figure \ref{figure, top four endings}(1). Since in this case we have $w(P)\geq 1$ by Proposition \ref{proposition, winding equal to 2 O_w } (note $O_w(H)$ is not changed under this z-cap removals), $A(x_2)\leq A(g')+1=A(g)-w(P)+1\leq A(x_1)$. Therefore, $\tau=A(x_1)$ in this case. 
\begin{figure}[htb!]
\begin{center}
\includegraphics[scale=0.5]{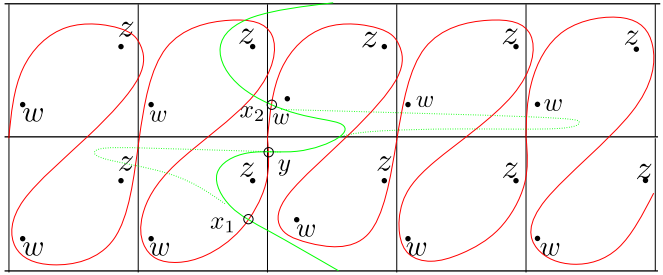}
\caption{Combining Figure \ref{figure, top four endings} (1) and Figure \ref{figure, bottom four endings} (3) or (4)}\label{figure, sigma positive top 12+bottom 34}
\end{center}
\end{figure}

The case given in Figure \ref{figure, sigma positive top 12+bottom 34} comes combining Figure \ref{figure, top four endings} (1) and Figure \ref{figure, bottom four endings} (3) or (4). Similarly we have $\tau=\max(A(x_1),A(x_2))$. Note in this case, $A(x_2)-A(x_1)=\beta \cdot \delta_{w,z}=-w(P)$, where $w(P)$ stands for the winding number of $P$. Also note by Proposition \ref{proposition, winding equal to 2 O_w }, $w(P)\leq 0$. Hence $\tau=A(x_2)=A(x_1)-w(P)$. 

\begin{figure}[htb!]
\begin{center}
\includegraphics[scale=0.5]{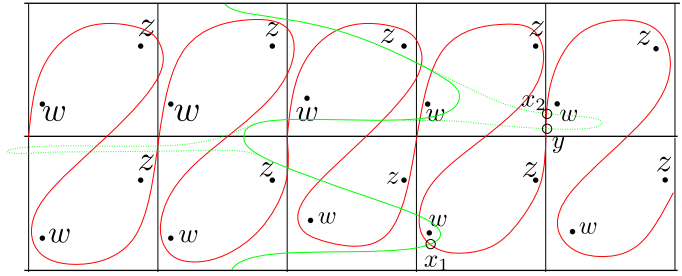}
\caption{Combining Figure \ref{figure, top four endings} (3) or (4) and Figure \ref{figure, bottom four endings} (1) or (2)}\label{figure, sigma positive top 34+bottom 12}
\end{center}
\end{figure}

The case given in Figure \ref{figure, sigma positive top 34+bottom 12} comes combining Figure \ref{figure, top four endings} (3) or (4) and Figure \ref{figure, bottom four endings} (1) or (2). A similar analysis shows $\tau=A(x_1)$. 

In view of the discussion above, it suffices to determine $A(x_1)$. Note $x_1$ is the first intersection of $\beta$ and $\alpha$ as we traverse upwards along $\beta$ in the simplified pairing diagram; denote this oriented arc starting from $\beta\cap\lambda$ and going upward to $x_1$ by $l$ (see Figure \ref{figure, sigma positive top 12+bottom 12}), and denote the corresponding arc in the original pairing diagram by $\tilde{l}$. Note $l\cdot\mu=\lfloor \frac{w}{2} \rfloor$ according to Proposition \ref{proposition, winding equal to 2 O_w }, and the A-buoys on $l$ contribute in total to the Alexander grading by $\lceil \frac{\sigma}{2} \rceil-\lfloor \frac{w(P)}{2} \rfloor=\tilde{l}\cdot \delta_{w,z}.$ This last equation can be seen by understanding the effect of $L_1$ and $L_2$ (Figure \ref{figure, four possible endings of a z-cap removal}) and apply an inductive argument: suppose only a single $L_1$ occurs throughout; in the begining, as we traverse along $\beta$ upwards, it intersects $\mu$ for $\lceil \frac{\sigma}{2}\rceil$ times until we reaches the first intersection point, and each $L_1$ increases this intersection by 1, and the corresponding $\tilde{l}\cdot \delta_{w,z}=-1$; $L_2$ has an opposite effect.

Let $c$ be the intersection of $\beta$ and $\alpha$ lying in the center (See Figure \ref{figure, sigma positive top 12+bottom 12}). We have, $$A(c)-A(x_1)=(\beta-\tilde{l})\cdot \delta_{w,z}=-w(P)-(\lceil \frac{\sigma}{2} \rceil-\lfloor \frac{w(P)}{2} \rfloor)=-\frac{w(P)+\sigma(P)}{2}-1$$
We claim $A(c)=0$. Assuming this claim, we have $A(x_1)=\frac{w(P)+\sigma}{2}+1$. It is then straightforward to see $\tau(P(K))=\frac{|w(P)|+\sigma(P)}{2}+1$ when $\sigma\geq 3$ and $\sigma$ is odd.

We now justify our claim on the Alexander grading of $c$. 
\begin{figure}[ht!]
\begin{center}
\includegraphics[scale=0.45]{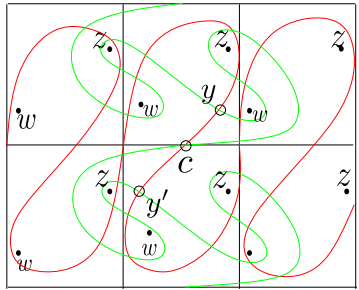}
\caption{The Alexander grading of the center intersection point is $0$.}\label{figure, Alexander grading of the center intersection}
\end{center}
\end{figure}
\begin{proof}[Proof of the claim $A(c)=0$]
It is better to have a concrete example in mind, see Figure \ref{figure, Alexander grading of the center intersection}. Note $c$ corresponds to the center point in the symmetric minimal intersection diagram: if we rotate such pairing diagram about $c$ for an angle $\pi$, the diagram goes back to itself. We pair the intersection point with its symmetric counterpart. Take a pair of such intersection points $y$ and $y'$, let $l_{cy}$ and $l_{cy'}$ denote the arc on $\beta$ from $c$ to $y$ and $y'$ respectively. Note $$A(y)-A(c)=l_{cy}\cdot \delta_{w,z}.$$
\begin{align*}
A(y')-A(c)&=l_{cy'}\cdot \delta_{w,z}\\
&=-l_{cy'}\cdot (-\delta_{w,z})\\
&=-l_{cy'}\cdot \delta_{z,w}\\
&=-l_{cy}\cdot \delta_{w,z}\\
&=A(c)-A(y),
\end{align*}
where in the fourth equality we used the symmetry of the diagram. Therefore the Alexander grading of elements of $\widehat{HFK}(P(T_{2,3}))$ is symmetric about $A(c)$, and hence $A(c)=0$ by the convention of how we grade knot Floer homology groups. This finishes the proof of the claim.
\end{proof}

We move to consider the case when $\sigma\leq-3$ and odd.
Similar analysis reveals how the pairing diagrams would like at the end of z-cap removals; the upper row is shown in Figure \ref{figure, top three endings sigma negative} and the lower row is shown in Figure \ref{figure, lower three endings sigma negative}, both have three cases. Again we combine Figure \ref{figure, top three endings sigma negative} and Figure \ref{figure, lower three endings sigma negative}, and discuss the corresponding value of the $\tau$-invariant. 
\begin{figure}[htb!]
\begin{center}
\includegraphics[scale=0.5]{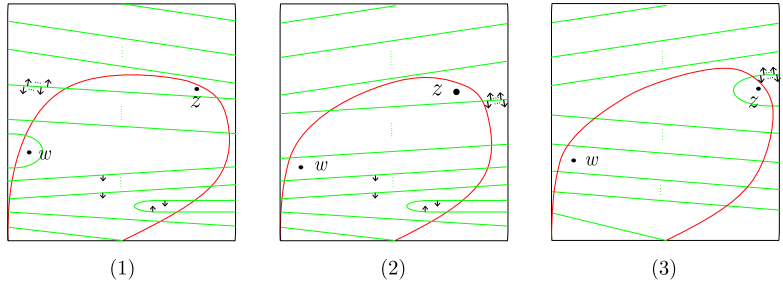}
\caption{Possible endings in the upper row when $\sigma\leq -3$.}\label{figure, top three endings sigma negative}
\end{center}
\end{figure}

\begin{figure}[htb!]
\begin{center}
\includegraphics[scale=0.5]{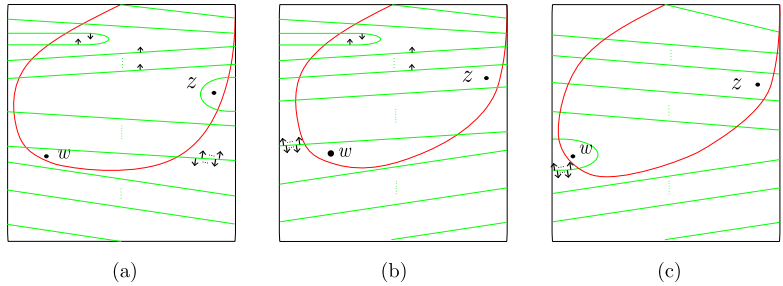}
\caption{Possible endings in the lower row when $\sigma\leq -3$.}\label{figure, lower three endings sigma negative}
\end{center}
\end{figure}

First, if the lower row is of type (a) in Figure \ref{figure, lower three endings sigma negative}, then the upper row could be of type (2) or type (3) in Figure \ref{figure, top three endings sigma negative}. Figure \ref{figure, a+2 sigma negative} shows the case when the lower row is of type (a) and the upper row is of type (2). In this case, $0\geq w(P)>\sigma+2$. An analysis of the differential as above shows $\tau=\max(A(x_2),A(x_1))$. Note we have $A(x_2)=0$ (since $A(x_2)-A(c)=0$) and $A(x_1)=\frac{w(P)+\sigma}{2}+1\leq 0$. Therefore, $\tau=0$. Figure \ref{figure, a+3 sigma negative} shows the case when the lower row is of type (a) and the upper row is of type (3). In this case, $w(P)\leq\sigma+2$. Again $\tau=\max(A(x_2),A(x_1))$. While we still have $A(x_1)=\frac{w(P)+\sigma}{2}+1$, $A(x_2)=A(x_1)-w(P)=\frac{-w(P)+\sigma}{2}+1$. Therefore, $\tau=\frac{|w(P)|+\sigma}{2}+1$.
\begin{figure}[htb!]
\begin{center}
\includegraphics[scale=0.5]{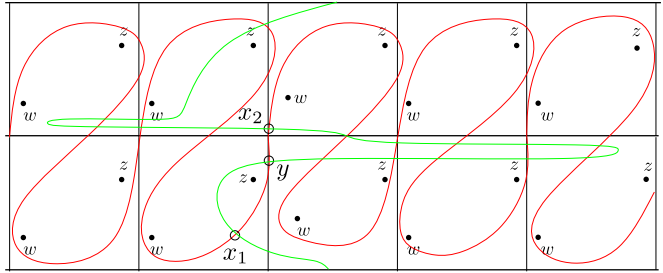}
\caption{Lower type (a), upper type (2).}\label{figure, a+2 sigma negative}
\end{center}
\end{figure}
\begin{figure}[htb!]
\begin{center}
\includegraphics[scale=0.5]{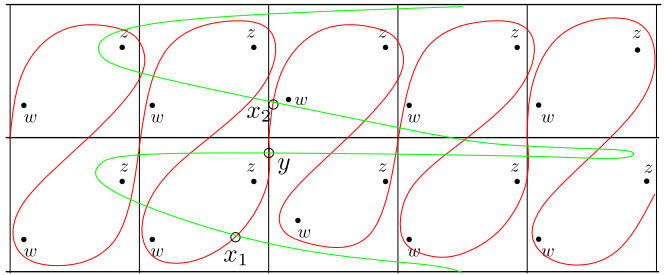}
\caption{Lower type (a), upper type (3).}\label{figure, a+3 sigma negative}
\end{center}
\end{figure}

Second, if the lower row is of type (b) in Figure \ref{figure, lower three endings sigma negative}, then the upper row must be of type (1) in Figure \ref{figure, top three endings sigma negative}, and they combine to generate a pairing diagram of the form as shown in Figure \ref{figure, b+1 sigma negative}. Note in this case $0\leq w(P) <-\sigma-2$. Again, $\tau=\max(A(x_2),A(x_1))$. Note we have $A(x_2)=0$ and $A(x_1)=\frac{w(P)+\sigma}{2}+1\leq 0$. Therefore, $\tau=0$.
\begin{figure}[htb!]
\begin{center}
\includegraphics[scale=0.5]{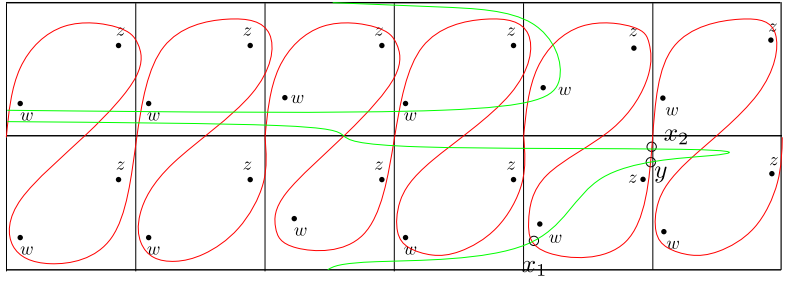}
\caption{Lower type (b), upper type (1).}\label{figure, b+1 sigma negative}
\end{center}
\end{figure}

Finally, if the lower row is of type (c) in Figure \ref{figure, lower three endings sigma negative}, then the upper row must be of type (1) in Figure \ref{figure, top three endings sigma negative}, and the corresponding pairing diagram is shown in Figure \ref{figure, c+1 sigma negative}. Note in this case, $w(P) \geq -\sigma-2$ and $\tau=A(x_0)=\tau=\frac{w(P)+\sigma}{2}+1$.  
\begin{figure}[htb!]
\begin{center}
\includegraphics[scale=0.5]{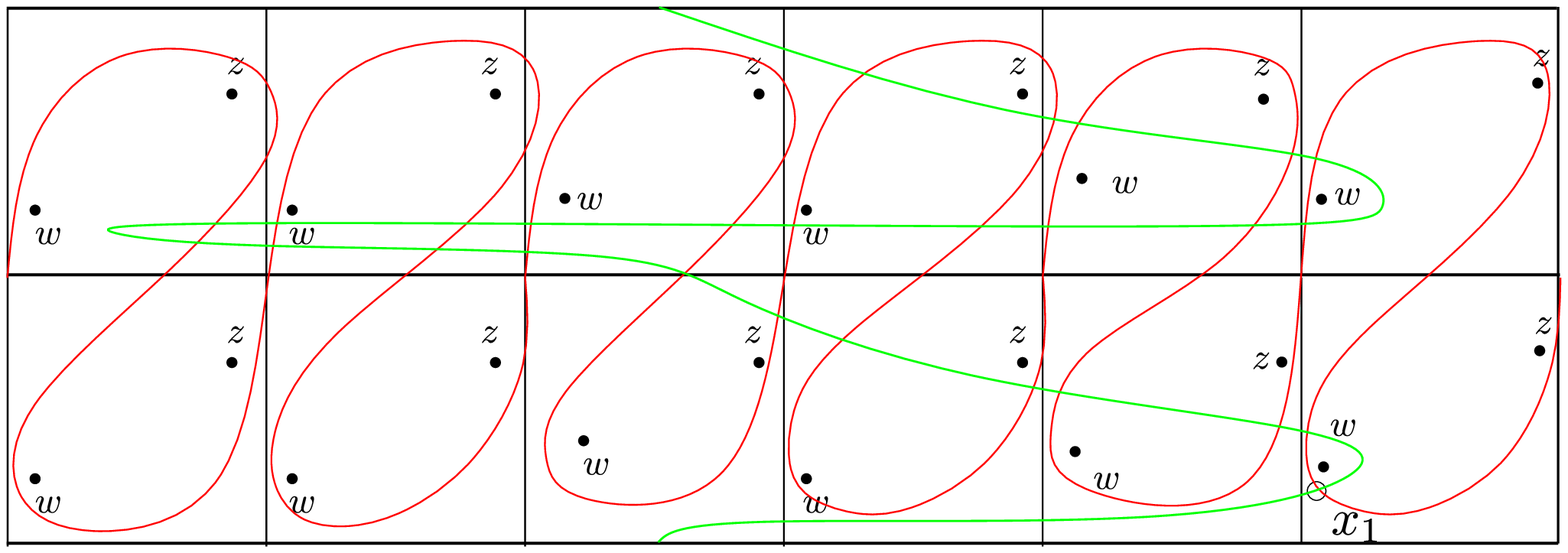}
\caption{Lower type (c), upper type (1).}\label{figure, c+1 sigma negative}
\end{center}
\end{figure}
\end{proof}

\begin{proof}[Proof of Corollary \ref{Corollary, when two-bridge patterns do not homomorphism}]
First note that if a pattern knot $P$ of winding number $w(P)$ induces a homomorphism on the smooth knot concordance group, then $\tau(P(K))=|w(P)|\tau(K)$. To see this, first note $P$ must be a slice pattern to start with, and then a theorem of Roberts in \cite{MR2916283} states there is an number $\epsilon(P)\geq 0$ such that 
$|\tau(P(K))-|w(P)|\tau(K)|\leq \epsilon(P)$ for any companion knot $K$. Suppose $\tau(P(K))-|w(P)|\tau(K)\neq 0$, then one can choose $n$ sufficiently large so that $|\tau(P(nK))-|w(P)|\tau(nK)|=n|\tau(P(K))-|w(P)|\tau(K)|>\epsilon(P)$, which is a contradiction. 

Now by Theorem \ref{Theorem, tau invariant formula}, we may set 
\begin{align*}
\max(\frac{|w(P)|+\sigma}{2}+1,0)=|w(P)|\\
 \min(\frac{-|w(P)|+\sigma}{2},0)=-|w(P)|
\end{align*}
A simple computation implies both equations hold if and only if $|w(P)|=1$ and $\sigma=-1$.
\end{proof}
\section{Brief discussion on immersed curve for general patterns}
We give some speculations on how to extend Theorem \ref{Pairing thm} to involve arbitray pattern knots. 

Without a natural genus-one Heegaard diagram, one has to give a procedure to represent filtered type D structures by immersed train tracks on $T^2$. The difficulty is incurred by the fact that filtered type D structures are often not reduced. The strategy given in \cite{hanselman2016bordered} does not address the unreduced case. In fact, the redundance of differentials in a type D structure causes two issues. First, not every differetial needs to be represented by short arcs in the cutted torus $[0,1]\times [0,1]$, and hence one needs to decide which differentials needed to be drawn. Second, for a differential labeled by $\rho_\emptyset$, one also has to decide on which side of the cutted torus should the corresponding cap be placed. 
\begin{figure}[htb!]
\begin{center}
\includegraphics[scale=0.5]{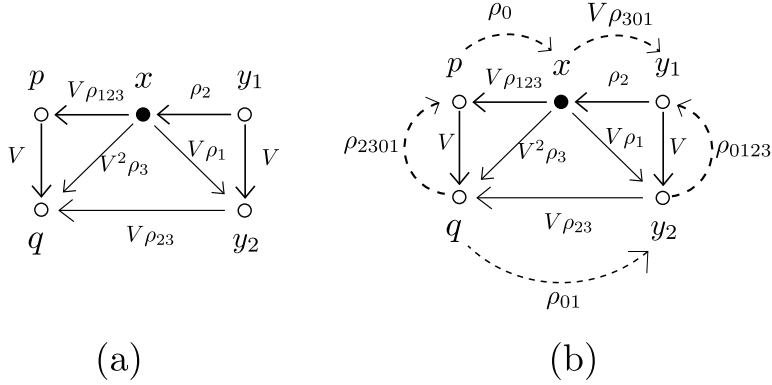}
\caption{(a) A filtered type D structure. (b) A filtered extension of (a). Note $\rho_{\emptyset}$ is omitted in the label as it is the identity of the (extended) torus algebra.}\label{figure, filtered CFD and extension}
\end{center}
\end{figure}
\begin{exam}
We illustrate the issues by an example. Consider the filtered type D structre given in Figure \ref{figure, filtered CFD and extension} (a). Here we view the type D structure as a module over $\mathcal{A}\otimes \mathbb{F}[V]$, where $V$ is a formal variable used to record the shift of the Alexander grading in the structure map. (See Page 203 of \cite{MR3827056}.) If one were to draw the corresponding train track, then for a disirable pairing theorem to hold, one would arrive at Figure \ref{figure, train track for filtered CFD} (a). (Note when pairing this train track with another, we used its elliptic involution as in Figure \ref{figure, train track for filtered CFD} (b) which can be viewed as certain ``immersed Heegaard diagram".) Notice all the edges are drawn using the rules given in Section 2.2, but we throw away the edges corresponding to $x \xrightarrow{V^2\rho_3} q$ and $x \xrightarrow{V\rho_1} y_2$. Also one needs to prevent messing up with the order of $p$ and $q$, i.e.\ arriving at a diagram in Figure \ref{figure, messed-up train track}.
\end{exam}
\begin{figure}[htb!]
\begin{center}
\includegraphics[scale=0.45]{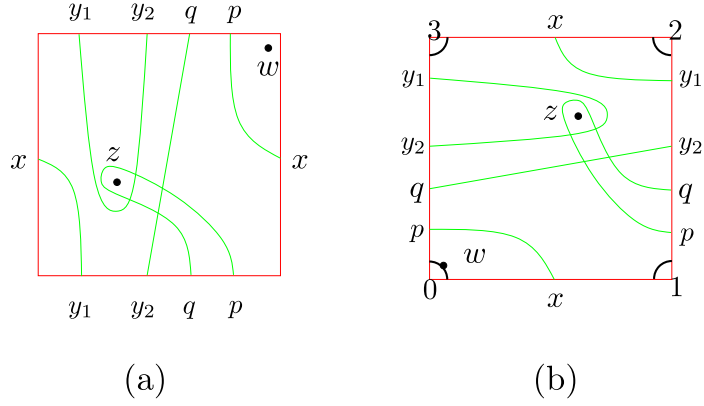}
\caption{(a) The train track corresponding to the filtered type D structure in Figure \ref{figure, filtered CFD and extension} (a). (b) The elliptic involution of (a), which is used when doing Lagrangian intersection pairing; this may be viewed as an ``immersed Heegaard diagram" and used to compute the type D structure. }\label{figure, train track for filtered CFD}
\end{center}
\end{figure}

\begin{figure}[htb!]
\begin{center}
\includegraphics[scale=0.5]{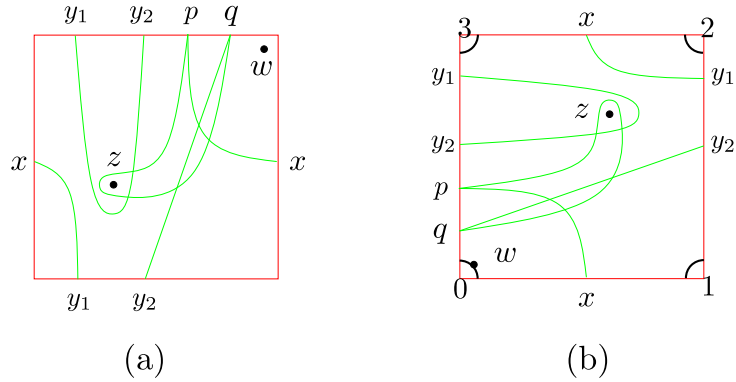}
\caption{(a) A bad train track representation for the type D structure in Figure \ref{figure, filtered CFD and extension} (a) due to poor position of $p$ and $q$. (b) The elliptic involution of (a).}\label{figure, messed-up train track}
\end{center}
\end{figure}
Such issues can be resolved by introducing a notion called \emph{filtered extendability}. To spell out, recall the torus algebra $\mathcal{A}$ can be extended to a larger algebra $\tilde{\mathcal{A}}$ as shown by the quiver diagram in Figure \ref{figure, extended torus algebra}. We use $\tilde{\mu}$ to denote the multiplication, use $\mathcal{I}$ to denote the ring of idempotents, and let $U$ denote the central element $\rho_{0123}+\rho_{1230}+\rho_{2301}+\rho_{3012}$.

\begin{figure}[htb!]
\begin{center}
\includegraphics[scale=0.5]{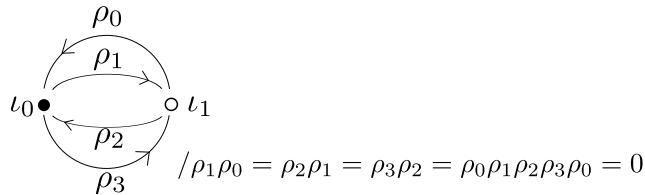}
\caption{The extended torus algebra}\label{figure, extended torus algebra}
\end{center}
\end{figure}
\begin{defn}
A filtered extended type D structure over $\tilde{\mathcal{A}}$ is is a unital left $\mathcal{I}\otimes \mathbb{F}[V]$-module $N$ equipped with an $\mathcal{I}\otimes \mathbb{F}[V]$-linear map $\tilde{\delta}: N \rightarrow \tilde{\mathcal{A}}\otimes_{\mathcal{I}} N$ satisfying the compatibility condition $$(\tilde{\mu}\otimes \mathbb{I})\circ(\mathbb{I} \otimes \tilde{\delta} )\circ \tilde{\delta}(x)=UV\otimes x.$$
\end{defn}

We point out such condition is satisfied automatically for type D structures associated to pattern knots. 

\begin{thm}
Every filtered type D structure arose from some doubly-pointed bordered Heegaard diagram is filtered extendable.
\end{thm}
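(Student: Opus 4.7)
The plan is to construct $\tilde{\delta}$ directly from the same doubly-pointed bordered Heegaard diagram $H$ that produces the given filtered type D structure $(N,\delta)$, by enlarging the class of holomorphic curves counted. Specifically, for a generator $x$, I would set
\[
\tilde{\delta}(x) = \sum_{y, B, \vec{\rho}} \#\mathcal{M}(x, y; B, \vec{\rho})\, a(-\vec{\rho})\, U^{n_w(B)}\, V^{n_z(B)} \otimes y,
\]
where $B$ is now allowed to have arbitrary nonnegative multiplicity at the $w$-basepoint and $\vec{\rho}$ may include the previously forbidden Reeb chord $\rho_0$, with $a(-\vec{\rho})$ taken in the extended torus algebra $\tilde{\mathcal{A}}$. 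Specializing $U=0$ and discarding chord sequences involving $\rho_0$ recovers the original $\delta$, so this is a genuine extension.

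The next step is to verify the compatibility condition $(\tilde{\mu}\otimes \mathbb{I})\circ(\mathbb{I}\otimes \tilde{\delta})\circ \tilde{\delta}(x) = UV \otimes x$ via the standard degeneration argument of \cite{MR3827056}. The ends of a generic one-parameter family of embedded index-two curves consist of two-story holomorphic buildings, which contribute exactly the left-hand side, together with boundary degenerations along the $\alpha$-arcs. In the reduced setting these degenerations are suppressed by the constraint $n_w=0$, producing the usual relation $\delta^2=0$; once $w$-crossings are allowed, each such degeneration is supported on a periodic domain that covers the Heegaard surface with total multiplicity one at both basepoints, and its Reeb asymptotics form a cyclic word wrapping once around the pointed match circle. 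Summing over the four rotational starting positions of this word recovers $\rho_{0123}+\rho_{1230}+\rho_{2301}+\rho_{3012}=U$ on the algebra side and $V^{1}=V$ on the filtration side, yielding the required $UV\otimes x$.

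The principal obstacle will be pinning down this local combinatorial claim: that the embedded boundary degenerations incident to $x$ are in bijection with the four rotations above, with signs and multiplicities assembling into precisely the central element $U$ (rather than some proper multiple, or a generator-dependent expression). A subsidiary task is ruling out spurious contributions from higher-multiplicity bubbles or sphere bubbles, which is handled by the standard positivity-of-domains and generic-transversality machinery already present in the bordered theory. Once the boundary-degeneration count is controlled, the rest of the compatibility identity reduces to a direct translation of the classical bordered Floer structure equation, with the variable $V$ passively recording $n_z(B)$ throughout. The additional check that $\tilde{\delta}$ is $\mathcal{I}\otimes \mathbb{F}[V]$-linear is immediate from the definition.
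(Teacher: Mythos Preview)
Your approach is essentially the paper's own: the paper's proof is the single sentence ``Literally the same as Appendix A in \cite{hanselman2016bordered}, with an extra base point taken into account,'' and your proposal is precisely an unpacking of that reference, with the formal variable $V$ passively tracking $n_z$ throughout.

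One small correction to your displayed formula: once you allow the Reeb chord $\rho_0$ in $\vec{\rho}$, the algebra element $a(-\vec{\rho})\in\tilde{\mathcal{A}}$ already records passages through $w$, so the explicit factor $U^{n_w(B)}$ is redundant (and as written would double-count). The map should simply be
\[
\tilde{\delta}(x)=\sum_{y,B,\vec{\rho}} \#\mathcal{M}(x,y;B,\vec{\rho})\, a(-\vec{\rho})\, V^{n_z(B)}\otimes y,
\]
with $\vec{\rho}$ ranging over sequences that may include $\rho_0$. With that fix, the boundary-degeneration argument you sketch is exactly the one in Hanselman--Rasmussen--Watson; the only new observation is that the periodic domain in question has $n_z=1$, which supplies the factor of $V$ in the compatibility relation.
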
 
\begin{proof}
Literally the same as Appendix A in \cite{hanselman2016bordered}, with an extra base point taken into account. 
\end{proof}

Given a filtered extended type D structure, we can associate a train track to it via the following procedure:
\begin{itemize}
\item[(Step 1)]Reduce the type D structure until it is \emph{filtered reduced}, i.e.\ there is a basis for $N$ so that over this base, no differential is labeled by $\rho_{\emptyset}$ (but could be labeled by $V^n\rho_{\emptyset}$ for some positive integer $n$).
\item[(Step 2)]Represent the filtered reduced structure by a decorated graph and throw away all edges corresponding to differentials with nonzero $V$ power. 
\item[(Step 3)]Embed the vertices of the decorated graph into $T$ so that the $\bullet$ vertices lie on $\alpha$ in the interval $0\times [\frac{1}{4},\frac{3}{4}]$, and the $\circ$ vertices lie on $\beta$ in the interval $[\frac{1}{4},\frac{3}{4}]\times 0$.
\item[(Step 4)]Embed the edges of the decorated graph into $T$ according to rule as shown in Figure \ref{figure,rule of train track, filtered version}. Arrange all the edges to intersect transversely. 
\end{itemize}

\begin{figure}[htb!]
\begin{center}
\includegraphics[scale=0.55]{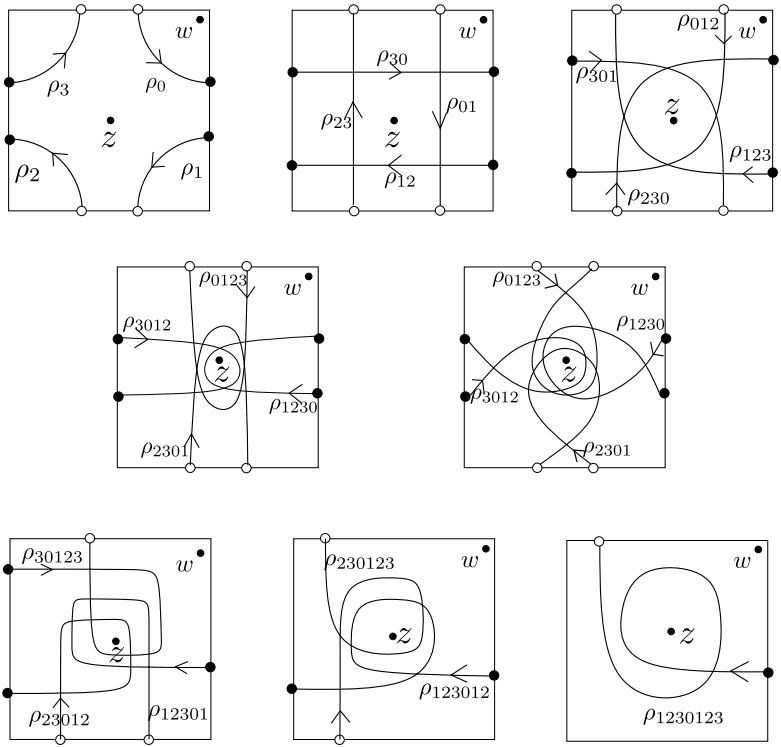}
\caption{Rules for assigning train tracks for a filtered extended type D structure.}\label{figure,rule of train track, filtered version}
\end{center}
\end{figure}

In practice, the immersed train tracks obtained above always form curves, and Lagrangian intersection pairing with such curves recovers the box-tensor product. We illustrate the procedure by examples. 

\begin{exam}
The filtered extended type D structre shown in Figure \ref{figure, filtered CFD and extension} (b) is an extension of the type D structure considered in Figure \ref{figure, filtered CFD and extension} (a). The corresponding train track are given in Figure \ref{figure, train tracks from filtered extension}: we give two different diagrams corresponding to different ordering of $p$ and $q$, but curves in the diagrams are obviously regularly homotopic to each other.  
\end{exam}

\begin{figure}[htb!]
\begin{center}
\includegraphics[scale=0.5]{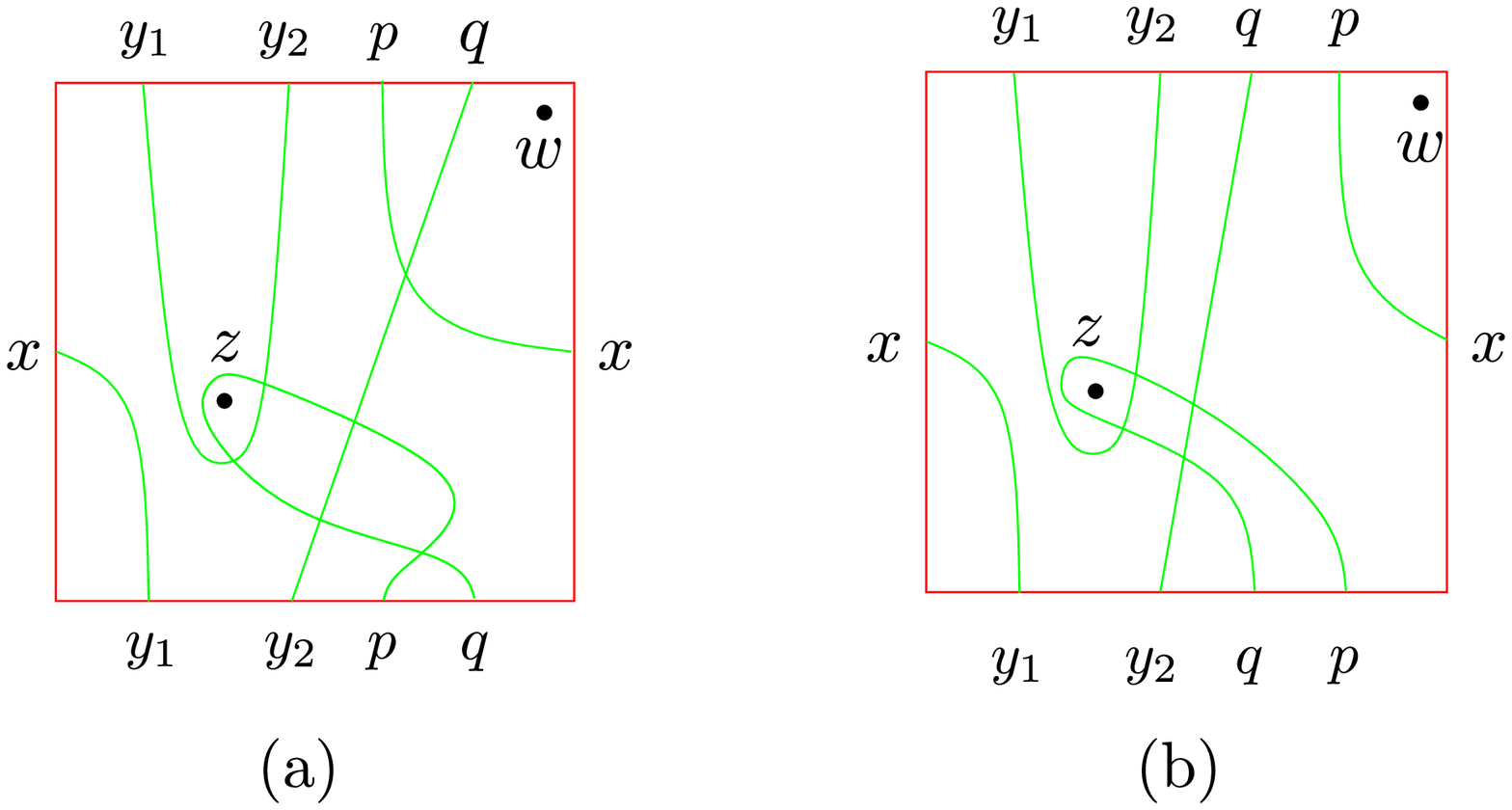}
\caption{Train tracks for the filtered extended type D structure in Figure \ref{figure, filtered CFD and extension}}\label{figure, train tracks from filtered extension}
\end{center}
\end{figure}

\begin{exam}
Note also that all the filtered type D structures coming from a genus-one doubly-pointed bordered Heegaard diagram can be extended to a filtered extended type D structure, and one can check that if one were to represent such type D structures as train tracks by the above algorithm, then one recovers the Heegaard diagram.
\end{exam}

In general it is easy to see that paring such train tracks with immersed curves of knot complements would give the $\widehat{HFK}$-group, as the train tracks thus obtained correspond to associated graded objects of the filtered type D structures. It is not clear to the author, though expected, that such train tracks can be represented as immersed curves. If so, a further question would be if the hat-version filtered knot Floer chain complex can be recovered. The author hope to address these questions in a future project.

\bibliographystyle{abbrv}
\bibliography{satellite}
\end{document}